\newcounter{alph}
\newtheorem{theo}[alph]{Theorem}
\newtheorem{propo}[alph]{Proposition}
\newtheorem{coro}[alph]{Corollary}
\numberwithin{equation}{section}
\newtheorem{cor}[equation]{Corollary}
\newtheorem{lem}[equation]{Lemma}
\newtheorem{prop}[equation]{Proposition}
\newtheorem{thm}[equation]{Theorem}
\theoremstyle{definition}
\newtheorem{exa}[equation]{Example}
\newtheorem{rem}[equation]{Remark}
\def\C{\mathbb C}
\def\F{\mathbb F}
\def\N{\mathbb N}
\def\R{\mathbb R}
\def\H{\mathbb H}
\def\ve{\varepsilon}
\def\vf{\varphi}
\def\la{\langle}
\def\ra{\rangle}
\newcommand{\ds}{\operatorname{ds}}
\newcommand{\dt}{\operatorname{dt}}
\newcommand{\dv}{\operatorname{\it dv}}
\newcommand{\ess}{\operatorname{ess}}
\newcommand{\im}{\operatorname{im}}
\newcommand{\Ray}{\operatorname{Ray}}
\newcommand{\SL}{\operatorname{SL}}
\newcommand{\SO}{\operatorname{SO}}
\newcommand{\Spin}{\operatorname{Spin}}
\newcommand{\spec}{\operatorname{spec}}
\newcommand{\supp}{\operatorname{supp}}
\newcommand{\tr}{\operatorname{tr}}
\begin{document}

%%%%%%%%%%%%%%%%%%%%%%%%%%%%%%%%%%%%%%%%%%%%%%

\title[Essential spectrum of operators]
{On the essential spectrum of differential operators
over geometrically finite orbifolds}
\author{Werner Ballmann}
\address
{WB: Max Planck Institute for Mathematics,
Vivatsgasse 7, 53111 Bonn}
\email{hwbllmnn\@@mpim-bonn.mpg.de}
\author{Panagiotis Polymerakis}
\address{PP: Max Planck Institute for Mathematics,
Vivatsgasse 7, 53111 Bonn}
\email{polymerp\@@mpim-bonn.mpg.de}

\thanks{\emph{Acknowledgments.}
We are grateful to the Max Planck Institute for Mathematics
and the Hausdorff Center for Mathematics in Bonn for their support and hospitality.
We would like to thank Boris Hasselblatt for helpful information on references.}

\date{\today}

\subjclass[2010]{58J50, 35P15, 53C20}
\keywords{Orbifold, geometrically finite, differential operator, Laplace type operator, Casimir operator, Hodge-Laplacian, spectrum}

\begin{abstract}
We discuss the essential spectrum of essentially self-adjoint elliptic differential operators of first order
and of Laplace type operators on Riemannian vector bundles over geometrically finite orbifolds.
\end{abstract}
 
\maketitle

%%%%%%%%%%%%%%%%%%%%%%%%%%%%%%%%%%%%%%%%%%%%%%
\tableofcontents
%%%%%%%%%%%%%%%%%%%%%%%%%%%%%%%%%%%%%%%%%%%%%%

%%%%%%%%%%%%%%%%%%%%%%%%%%%%%%%%%%%%%%%%%%%%%%
%\newpage
\section*{Introduction}
\label{secintro}
%%%%%%%%%%%%%%%%%%%%%%%%%%%%%%%%%%%%%%%%%%%%%%

The essential spectrum of differential operators on Riemannian vector bundles over Riemannian
manifolds depends on the geometric structure of the bundles and manifolds at infinity.
We are interested in the case where the manifolds in question are \emph{geometrically finite}
in the sense of Bowditch \cite{Bowditch95}.
More generally, we will consider differential operators on vector bundles
over geometrically finite Riemannian orbifolds.
The reader not familiar with orbifolds may substitute \lq mani\rq{} for \lq orbi\rq{}
wherever the prefix \lq orbi\rq{} occurs. 

To set the stage,
let $O$ be a  complete and connected Riemannian orbifold of dimension $m$ with Levi-Civita connection $\nabla$,
curvature tensor $R$, and sectional curvature $-b^2\le K\le-a^2$, where $0<a\le b$.
Since $K\le0$, we have $O=\Gamma\backslash X$,
where $X$ is a complete and simply connected Riemannian manifold
and $\Gamma$ a countable group which acts properly discontinuously and isometrically on $X$.

Let $X_\iota$ be the \emph{ideal boundary} of $X$ and $X_c=X\cup X_\iota$
the compactification of $X$ with respect to the cone topology \cite{EberleinONeill73}.
Denote by $\Lambda$ and $\Omega=X_\iota\setminus\Lambda$ the \emph{limit set} and \emph{domain of discontinuity} of $\Gamma$,
respectively.
Recall that $\Lambda$ is a closed and $\Gamma$-invariant subset of $X_\iota$.

Let $E\to O$ be a Riemannian vector bundle over $O$ together with a metric connection,
also denoted by $\nabla$.
Let $A$ be an elliptic differential operator on $E$ which is \emph{essentially self-adjoint},
that is, with respect to the $L^2$-product, it is symmetric on $C^\infty_c(O,E)$
and has self-adjoint closure in $L^2(O,E)$.
For example, the \emph{Laplacian} $\Delta=\nabla^*\nabla$ is essentially self-adjoint.

Denote by $\spec(A,O)$ and  $\spec_{\ess}(A,O)$ the \emph{spectrum} and \emph{essential spectrum} of the closure of $A$,
and let $\lambda_0(A,O)$ and $\lambda_{\ess}(A,O)$ be the \emph{bottom}
of $\spec(A,O)$ and $\spec_{\ess}(A,O)$, respectively.

Lift $E$ and $A$ to $X$ and denote the lifts also by $E$ and $A$.
Assume that $A$ on $E$ over $X$ is also essentially self-adjoint
and use the analogous notation for the above spectral invariants.

We say that $E$ and $A$ are \emph{uniform} if, over $X$, they are invariant under the action of a group $G$
of automorphisms of $E$, which factors through a uniform action of isometries on $X$.
For example, the Hodge-Laplacians on the bundles of forms over quotients
of hyperbolic spaces are uniform.
The following assertion is probably known to experts (see for example \cite[Theorem C]{CarronPedon04})
and is the reason for the equality statements in our main results.

\begin{propo}\label{propo}
If $\Omega\ne\emptyset$ and $A$ is uniform, then
\begin{align*}
	\spec(A,X) = \spec_{\ess}(A,X) \subseteq \spec_{\ess}(A,O)
\end{align*}
and, in particular, $\lambda_{\ess}(A,O) \le \lambda_{\ess}(A,X) = \lambda_0(A,X)$.
\end{propo}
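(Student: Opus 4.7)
The statement decomposes into the equality $\spec(A,X) = \spec_{\ess}(A,X)$ and the inclusion $\spec_{\ess}(A,X) \subseteq \spec_{\ess}(A,O)$; the spectral bound is then immediate. I would prove both via a single construction of singular Weyl sequences whose supports sit inside an open subset of $X$ that embeds into $O$.

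First note that $X$ is noncompact, since $\Omega \neq \emptyset$ forces $X_\iota \neq \emptyset$. Pick $\xi \in \Omega$, chosen generically so that its $\Gamma$-stabilizer is trivial (a nontrivial element of the discrete group $\Gamma$ fixes only finitely many points at infinity, while $\Omega$ is open). Proper discontinuity of the $\Gamma$-action on $X \cup \Omega$ provides a cone-topology neighborhood $U$ of $\xi$ with $\gamma U \cap U = \emptyset$ for every $\gamma \neq e$; setting $V := U \cap X$, the projection $p\colon X \to O$ restricts to an injection on $V$. By shrinking $U$, I arrange that $V$ contains a horoball at $\xi$. The key geometric fact from the upper curvature bound is that horoballs in $X$ contain metric balls of arbitrarily large radius: if $\beta_\xi$ denotes the Busemann function at $\xi$, then since $\beta_\xi$ is $1$-Lipschitz, $B(y,R) \subset \{\beta_\xi < -T\}$ whenever $\beta_\xi(y) < -T - R$. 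Hence $V$ contains a sequence of pairwise disjoint balls $B_k$ with radii $R_k \to \infty$.

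Now let $\lambda \in \spec(A,X)$ and take a Weyl sequence $(v_n) \subset C^\infty_c(X,E)$ with $\|v_n\| = 1$ and $\|(A-\lambda)v_n\| \to 0$. For each $k$, select $n_k$ with $\|(A-\lambda)v_{n_k}\| < 1/k$ and pick $g_{n_k} \in G$ such that $g_{n_k}(\supp v_{n_k}) \subset B_k$. Such $g_{n_k}$ exists by cocompactness of the $G$-action: if $F \subset X$ is a compact fundamental domain, any point of $X$ can be moved to $F$ by $G$, so $\supp v_{n_k}$ can be translated into $B_k$ as soon as $R_k > \diam \supp v_{n_k} + \diam F$, which I arrange by choosing $R_k$ large enough. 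By $G$-invariance of $A$ and $E$, the sections $u_k := g_{n_k} v_{n_k}$ satisfy $\|u_k\| = 1$ and $\|(A-\lambda)u_k\| < 1/k$, with pairwise disjoint supports inside $V$. Thus $(u_k)$ is a singular Weyl sequence on $X$, yielding $\lambda \in \spec_{\ess}(A,X)$. Since $p|_V$ is injective and a local isometry, the push-forwards $\tilde u_k := (p|_V)_* u_k \in C^\infty_c(O,E)$ have pairwise disjoint supports, $\|\tilde u_k\| = 1$, and $\|(A-\lambda)\tilde u_k\| \to 0$ because $A$ over $O$ is the lift of $A$ over $X$. Hence $(\tilde u_k)$ is a singular Weyl sequence on $O$, so $\lambda \in \spec_{\ess}(A,O)$.

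\textbf{Main obstacle.} The crux is producing the open region $V$ satisfying simultaneously (i) injective projection to $O$, coming from proper discontinuity at $\xi \in \Omega$, and (ii) containment of arbitrarily large metric balls, coming from horoball geometry in negative curvature. The cocompact $G$-action then makes it possible to translate any compactly supported test section into $V$, which is what ties spectrum on $X$ to essential spectrum on $O$. A minor technicality is the possibility of a nontrivial but finite stabilizer $\Gamma_\xi$, which can be handled either by the generic choice of $\xi$ or by restricting to a $\Gamma_\xi$-fundamental subset of $V$.
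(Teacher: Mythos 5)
Your argument is correct and follows essentially the same route as the paper: pick $\xi\in\Omega$ with trivial stabilizer, use proper discontinuity of $\Gamma$ on $X\cup\Omega$ to get a precisely invariant neighborhood $U$ of $\xi$, exploit that $U\cap X$ (a cone-type neighborhood, hence containing horoballs) holds arbitrarily large metric balls, and use cocompactness of the $G$-action (which is what \emph{uniform} means) to translate the supports of a Weyl sequence for $\lambda\in\spec(A,X)$ into that region so that the resulting sections both remain a Weyl sequence on $X$ and push down to $O$. One small slip: a nontrivial \emph{elliptic} element of $\Gamma$ can fix an entire subsphere of $X_\iota$, so ``only finitely many fixed boundary points'' is false in general; the correct justification that a generic $\xi\in\Omega$ has trivial stabilizer is that each $\mathrm{Fix}(\gamma)\cap\Omega$ is closed with empty interior (an isometry of $X$ fixing an open subset of $X_\iota$ is the identity), whence Baire applies — or one uses your fallback of passing to a $\Gamma_\xi$-fundamental subset.
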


The chief issue in our main results, Theorems \ref{bottom} -- \ref{sigmal} below,
is the converse inclusion and inequality, respectively.

Recall that $\spec_{\ess}(A,O)=\emptyset$ in the case where $O$ is compact.
However, non-compact\-ness of $O$ is not a sufficient replacement
for the condition $\Omega\ne\emptyset$ in \cref{propo};
compare with \cref{subold} below.

%%%%%%%%%%%%%%%%%%%%%%%%%%%%%%%%%%%%%%%%%%%%%%
\subsection{Main results}
\label{submain}
%%%%%%%%%%%%%%%%%%%%%%%%%%%%%%%%%%%%%%%%%%%%%%
We say that $A$ is of \emph{Laplace type} if $A$ is of second order
and the principal symbol of $A$ satisfies $\sigma_A(\alpha)=-|\alpha|^2$.
This holds if and only if $A-\Delta$ is of (at most) first order.
\emph{Schr\"odinger operators} (with respect to $\nabla$) $A=\Delta+V$ are of Laplace type
since the \emph{potential} $V$ is of order zero.
Important examples are the squares of Dirac operators, like $(d+d^*)^2$ on differential forms,
where the potential is a curvature term.
If $A$ is of Laplace type and bounded from below, then $A$ is essentially self-adjoint;
see \cite{BallmannPolymerakis20a}.
(The manifold proof there extends readily to orbifolds.)

\begin{theo}\label{bottom}
Assume that $O$ is geometrically finite, that $A$ is of Laplace type,
and that $A$ is bounded from below on $E$ over $X$.
Then $A$ on $E$ over $O$ is also bounded from below and
\begin{align*}
	\lambda_{\ess}(A,O) \ge \lambda_{\ess}(A,X).
\end{align*}
Equality holds if the volume of $O$ is infinite and $A$ is uniform.
\end{theo}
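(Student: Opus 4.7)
The plan is to combine Persson's characterization of the essential spectrum at infinity with the thick-thin decomposition of a geometrically finite orbifold. For any essentially self-adjoint Laplace type operator $A$ that is bounded from below,
\[
\lambda_{\ess}(A, O) = \sup_{K \subset O \text{ compact}} \lambda_0(A, O \setminus K),
\]
and analogously on $X$. Once boundedness from below is established on $O$, the main inequality reduces to showing: for every compact $K' \subset X$ and every $\ve > 0$ there is a compact $K \subset O$ such that each $\phi \in C_c^\infty(O \setminus K, E)$ satisfies $\langle A\phi, \phi\rangle_O \ge (\lambda_0(A, X \setminus K') - \ve)\|\phi\|_O^2$.

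First I would invoke Bowditch's structure theorem to write $O = K_0 \cup C_1 \cup \dots \cup C_k \cup E_1 \cup \dots \cup E_l$, where $K_0$ is compact; each cusp $C_j$ is isometric to $\Gamma_{p_j}\backslash B_{p_j}$ with $B_{p_j} \subset X$ a horoball at a bounded parabolic fixed point $p_j$ and $\Gamma_{p_j}$ its virtually nilpotent, hence amenable, stabilizer; and each end $E_m$ (corresponding to a component of $\Omega/\Gamma$, whose compactness is the hallmark of geometric finiteness) lifts isometrically under the covering map to an open set $\tilde E_m \subset X$ with pairwise disjoint non-trivial $\Gamma$-translates, by proper discontinuity of $\Gamma$ on $\Omega$. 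For any preassigned compact $K' \subset X$, I enlarge $K$ to contain $K_0$ together with an initial collar of each cusp and end so that all the $\tilde E_m$ and all the horoballs $B_{p_j}$ used below lie in $X \setminus K'$.

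Given $\phi \in C_c^\infty(O \setminus K, E)$, fix a subordinate smooth partition of unity and write $\phi = \sum_j \phi_j + \sum_m \phi_m$. For each funnel piece, the isometric lift $\tilde\phi_m \in C_c^\infty(\tilde E_m, E) \subset C_c^\infty(X \setminus K', E)$ preserves norms and Rayleigh quotient, giving $\langle A\phi_m, \phi_m\rangle_O \ge \lambda_0(A, X \setminus K')\|\phi_m\|_O^2$. For each cusp piece, lift to the $\Gamma_{p_j}$-invariant function $\tilde\phi_j$ on $B_{p_j}$ and apply the F\o{}lner criterion to the amenable group $\Gamma_{p_j}$: select cutoffs $\chi_n \in C_c^\infty(B_{p_j})$, obtained by summing translates of a fixed fundamental-domain cutoff over F\o{}lner subsets of $\Gamma_{p_j}$, so that $\chi_n\tilde\phi_j \in C_c^\infty(X \setminus K', E)$ and the commutator error $|\langle [A, \chi_n]\tilde\phi_j, \chi_n\tilde\phi_j\rangle_X| / \|\chi_n\tilde\phi_j\|_X^2$ tends to zero. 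Passing to the limit yields $\langle A\phi_j, \phi_j\rangle_O \ge (\lambda_0(A, X \setminus K') - \ve)\|\phi_j\|_O^2$, and summing over $j$ and $m$ (absorbing the partition-derivative cross terms into lower-order first-order errors) produces the desired bound. The very same local argument, applied to arbitrary $\phi \in C_c^\infty(O, E)$ instead of just those supported away from $K$, also delivers boundedness of $A$ from below on $O$, since the contribution from the compact core is controlled uniformly.

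The principal technical hurdle is the F\o{}lner step in the cusps: one must bound the first-order commutator $[A, \chi_n]\tilde\phi_j$ in $L^2$ by a quantity small relative to $\chi_n\tilde\phi_j$, despite $\tilde\phi_j$ carrying full $\Gamma_{p_j}$-translational mass. This requires the amenability of $\Gamma_{p_j}$, pinched negative curvature to give uniform control on horoball geometry (so the fundamental-domain cutoff has uniformly bounded gradient), and the Laplace-type structure ensuring bounded first-order coefficients of $A$ in synchronous cusp frames. For the equality clause, observe that $\Omega = \emptyset$ forces $\Lambda = X_\iota$, which for geometrically finite $\Gamma$ implies $\Gamma$ is a lattice and hence $\vol(O) < \infty$; consequently $\vol(O) = \infty$ ensures $\Omega \ne \emptyset$, and \cref{propo} supplies the reverse inequality $\lambda_{\ess}(A, O) \le \lambda_{\ess}(A, X)$.
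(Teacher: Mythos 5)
Your proposal fails at the treatment of the funnel ends, and this is a genuine, unfixable gap in the approach as stated. You claim that each end $E_m$ of $O$ corresponding to a component of $\Omega/\Gamma$ ``lifts isometrically under the covering map to an open set $\tilde E_m \subset X$ with pairwise disjoint non-trivial $\Gamma$-translates, by proper discontinuity of $\Gamma$ on $\Omega$.'' This is false in general. Proper discontinuity of $\Gamma$ on $\Omega$ means only that the quotient $\Omega/\Gamma$ is an orbifold; it does not make the covering $\Omega\to\Omega/\Gamma$ trivial, nor does it make the covering of $O$ over the funnel trivial. Consider a Schottky group $\Gamma\subseteq\operatorname{Isom}(H^3)$ (a convex cocompact free group): here $\Omega$ is connected, $\Omega/\Gamma$ is a single closed genus-$g$ surface, and the funnel neighbourhood of this boundary component in $O=\Gamma\backslash H^3$ is the quotient of the connected region $H^3\setminus N(H)$ by the \emph{entire} group $\Gamma$. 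This is neither an isometric copy of a region in $X$ nor a quotient by an amenable group, so neither the ``isometric lift'' branch nor the F\o{}lner/amenability branch of your argument applies to it. Already in dimension two, a funnel end of a Fuchsian surface is a quotient by an infinite cyclic group, not an isometric copy of a piece of $H^2$; and in higher rank examples the stabilizer need not be elementary at all.

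The paper circumvents exactly this obstruction, and does so with a genuinely different decomposition. Rather than cutting $O$ into cusps and funnels and trying to lift each piece, the paper covers the \emph{convex core} $C=\Gamma\backslash H$ by finitely many pieces (thick balls $V_i$ and cusp pieces $V_p$, with lifts $U_i, U_p$) and then takes the sets $\pi_C^{-1}(C\cap V)$, i.e.\ the full $\pi_C$-fibres over these pieces, as the covering of $O$. The cutoff functions are $\vf_V=\psi_V\circ\pi_C$ for a partition of unity $\sum\psi_V^2=1$ on $C$. Each covering $\pi_H^{-1}(H\cap U)\to\pi_C^{-1}(C\cap V)$ then has covering group either finite (thick pieces) or parabolic, hence amenable (cusp pieces), so the reduction to $\spec(A,X)$ runs through Proposition~\ref{pp} exactly as you intended for the cusps alone. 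The price is that the $\vf_V$ overlap over the entire non-compact funnel region; the crucial input that controls the cross terms is \cref{pu}: the gradients $\nabla\vf_V$ tend to zero uniformly as the distance to the thick core $H_\ve$ grows, because the orthogonal projection $\pi_C$ onto a convex set in pinched negative curvature contracts exponentially (\cref{curvest}). Combined with the identity~\eqref{partition}, this makes the partition-derivative terms negligible for a Weyl sequence escaping to infinity, which is what lets the localization go through. That contraction mechanism has no analogue in your scheme, and your remark about ``absorbing the partition-derivative cross terms into lower-order first-order errors'' glosses over precisely the place where the paper does real work. The argument for ``$\vol(O)=\infty\Rightarrow\Omega\ne\emptyset$'' and the appeal to \cref{propo} for equality are fine.
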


In the case where $A$ is of first order,
we can say more about the essential spectrum,
at least when the principal symbol $\sigma_A$ is uniformly bounded.
Then both, $A$ on $E$ over $O$ and $A$ on $E$ over $X$, are essentially self-adjoint.

\begin{theo}\label{sigmaf}
Assume that $O$ is geometrically finite, that $A$ is of first order,
and that $\|\sigma_A\|_\infty<\infty$.
Then \[\spec_{\ess}(A,O)\subseteq\spec_{\ess}(A,X).\]
Equality holds if the volume of $O$ is infinite and $A$ is uniform.
\end{theo}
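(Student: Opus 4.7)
The plan is to prove $\spec_{\ess}(A,O)\subseteq\spec_{\ess}(A,X)$ via Weyl's criterion, and to derive the reverse inclusion in the uniform infinite-volume case from \cref{propo}. Given $\lambda\in\spec_{\ess}(A,O)$, pick a normalized Weyl sequence $(u_n)\subset C^\infty_c(O,E)$ with $u_n\rightharpoonup0$ and $\|(A-\lambda)u_n\|_2\to0$. A standard cut-off argument lets us assume $\supp u_n$ leaves every compact subset of $O$, and the geometric finiteness of $O$ together with a pigeonhole step reduces us to the case that $\supp u_n$ is eventually contained in a single end of $O$: either a funnel, handled by lifting directly to $X$, or a cusp $\Gamma_P\backslash H_P$ with $\Gamma_P\le\Gamma$ the stabilizer of a bounded parabolic fixed point and $H_P$ a horoball. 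We focus on the cusp case, which is the main difficulty.

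Lift $u_n$ to the $\Gamma_P$-invariant section $\tilde u_n:=u_n\circ\pi$ on $H_P$, where $\pi\colon H_P\to\Gamma_P\backslash H_P$. Bounded parabolicity yields a relatively compact fundamental domain $F\subset H_P$ for $\Gamma_P$, and the Margulis lemma in variable negative curvature gives that $\Gamma_P$ is virtually nilpotent, hence amenable. Fix a F\o{}lner sequence $(S_n)$ in $\Gamma_P$ with $|\partial S_n|/|S_n|\to 0$, and construct a smooth cut-off $\psi_n\ge 0$ on $H_P$ equal to $1$ on $S_n\cdot F$, supported in a fixed bounded neighbourhood thereof, with $|d\psi_n|$ bounded independently of $n$. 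The candidate Weyl sequence on $X$ is $w_n:=v_n/\|v_n\|_2$, where $v_n:=\psi_n\tilde u_n$.

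The key estimate rests on $\|\sigma_A\|_\infty<\infty$. By $\Gamma_P$-invariance of $|\tilde u_n|^2$ and the normalization $\int_F|\tilde u_n|^2=1$, we have $\|v_n\|_2^2=|S_n|+O(|\partial S_n|)$. Writing
\[
(A-\lambda)v_n=\psi_n\cdot\bigl((A-\lambda)u_n\bigr)\circ\pi+\sigma_A(d\psi_n)\tilde u_n,
\]
the first summand has squared $L^2$-norm $\approx|S_n|\cdot\|(A-\lambda)u_n\|_2^2$, while the second, a zero-order operator, is bounded in squared norm by $\|\sigma_A\|_\infty^2\,C^2\,|\partial S_n|$, since $d\psi_n$ is supported in the F\o{}lner boundary layer, on which the mass of $\tilde u_n$ totals $O(|\partial S_n|)$. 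Dividing by $\|v_n\|_2^2$, we conclude $\|(A-\lambda)w_n\|_2\to 0$: the first contribution by the Weyl property of $(u_n)$ and the second by the F\o{}lner property $|\partial S_n|/|S_n|\to 0$. Weak convergence $w_n\rightharpoonup 0$ in $L^2(X,E)$ follows because $\supp v_n$ sits deep in $H_P$ and hence escapes to infinity in $X$. The main obstacle is precisely this F\o{}lner/volume bookkeeping on the boundary layer of $S_n\cdot F$, which exploits bounded parabolicity and the uniform bound on $\sigma_A$.

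For the equality statement, geometric finiteness of $O$ with $\vol(O)=\infty$ forces $\Omega\ne\emptyset$: otherwise $\Lambda=X_\iota$ and the convex hull of $\Lambda$ equals $X$, making $O$ of finite volume. Then \cref{propo} yields $\spec(A,X)=\spec_{\ess}(A,X)\subseteq\spec_{\ess}(A,O)$, which combined with the main inclusion closes the loop.
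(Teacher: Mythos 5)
Your overall strategy --- Weyl's criterion, localize near an end, transfer to $X$, and derive equality from \cref{propo} --- matches the paper's, and your F\o{}lner computation in the cusp case is essentially a re-derivation, in this special situation, of the amenable covering result \cref{pp} that the paper simply cites. However, two steps you treat as routine are where the real difficulty lies, and one does not go through as stated. The ``pigeonhole step'' localizing $\supp u_n$ to a single piece is not a standard cut-off argument: the end-pieces one must use have unbounded mutual overlap along the funnel directions, so the commutator error $\sigma_A(d\vf_V)u_n$ cannot be killed by putting $d\vf_V$ into a compact collar. One needs the gradients $d\vf_V$ to decay \emph{uniformly} as the distance to the thick part of the convex core grows --- this is precisely \cref{pu} --- which in turn rests on the specific choice $\vf_V=\psi_V\circ\pi_C$ with $\pi_C$ the nearest-point projection to the core and on the exponential contraction of Jacobi fields in \cref{curvest}.

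The more serious gap is ``a funnel, handled by lifting directly to $X$.'' The deck group over a funnel end is in general not amenable: for a convex cocompact quotient a funnel lifts to a $\Gamma$-invariant component of $X\setminus H$, and $\Gamma$ is, e.g., a surface group in the quasi-Fuchsian case. Since $\supp u_n$ may have arbitrarily large diameter transverse to the funnel direction, there is neither a direct lift nor a F\o{}lner argument available there. The paper circumvents this by refining the decomposition: the funnel is sliced into the fibers $\pi_C^{-1}(C\cap V_i)$ of $\pi_C$ over small balls $U_i$ around points $x_i\in H_\ve$, chosen so that $gU_i\cap U_i\ne\emptyset$ forces $g\in\Gamma_{x_i}$, a \emph{finite} group; each such fiber then lifts to $X$ with finite deck group, and localizing the Weyl sequence to one such fiber again requires the decaying-gradient cut-offs of \cref{pu}. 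A smaller point: the fundamental domain of $\Gamma_P$ acting on a horoball $H_P$ is not relatively compact --- it escapes toward the parabolic fixed point --- so your $S_n\cdot F$ must be replaced by the $S_n$-translates of the compact set $\pi^{-1}(\supp u_n)\cap F$ in order for the cut-offs $\psi_n$ to exist.
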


Dirac type operators, like $d+d^*$ on differential forms,
have a parallel principal symbol so that \cref{sigmaf} applies to them.

Controlling the essential spectrum of second order operators is more sophisticated.
Our arguments are based on standard estimates of Jacobi fields and non-standard
estimates of their variational derivatives.

Let $A=\Delta+B$ be a Laplace type operator on $E$.
Given a local orthonormal frame $(X_i)$ of $X$ or $O$,
write $B=\sum\sigma_B(X_i)\nabla_{X_i}+V$,
where the potential $V$ is a field of endomorphisms of $E$.
This decomposition of $B$ depends on the choice of the connection $\nabla$ on $E$;
however, $V$ does not depend on the choice of orthonormal frame.
If $V=V_++V_-$ denotes the decomposition in symmetric and skew-symmetric part,
then $\sum\sigma_B(X_i)\nabla_{X_i}+V_-$ is formally self-adjoint.
A straightforward computation gives $V_-=\frac12\sum(\nabla_{X_i}\sigma_B)(X_i)$.

\begin{theo}\label{sigmal}
Assume that $O$ is geometrically finite, that $\|\nabla R\|_\infty<\infty$,
and that $A=\Delta+B$ is a Laplace type operator with $\|\sigma_B\|_\infty<\infty$
and potential $V$ bounded from below.
Then both, $A$ on $E$ over $X$ and $E$ over $O$, are bounded from below
and \[\spec_{\ess}(A,O)\subseteq\spec_{\ess}(A,X).\]
Equality holds if the volume of $O$ is infinite and $A$ is uniform.
\end{theo}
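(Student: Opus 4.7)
The equality in the last sentence of \cref{sigmal}, once the inclusion $\spec_{\ess}(A,O)\subseteq\spec_{\ess}(A,X)$ is established, is immediate from \cref{propo}: infinite volume in a geometrically finite orbifold forces the presence of a funnel, hence $\Omega\ne\emptyset$, and the uniformity hypothesis then yields the reverse inclusion via \cref{propo}. The substance of the theorem is therefore the semi-boundedness of $A$ together with the inclusion.

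Semi-boundedness is a short form estimate. For $\phi\in C^\infty_c(E)$ one has
\[ \Re(A\phi,\phi) = \|\nabla\phi\|^2 + \Re\Big(\sum_i\sigma_B(X_i)\nabla_{X_i}\phi,\phi\Big) + (V_+\phi,\phi),\]
and Cauchy--Schwarz with AM--GM absorbs the first-order term into $\tfrac12\|\nabla\phi\|^2+\tfrac12\|\sigma_B\|_\infty^2\|\phi\|^2$. Combined with the lower bound on $V_+$, this gives semi-boundedness on $C^\infty_c(X,E)$ and on $C^\infty_c(O,E)$; essential self-adjointness is then the criterion recalled just before \cref{bottom}.

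For the inclusion, fix $\lambda\in\spec_{\ess}(A,O)$ and a Weyl sequence $(f_n)\subseteq C^\infty_c(O,E)$ with $\|f_n\|=1$, $f_n\rightharpoonup 0$, and $\|(A-\lambda)f_n\|\to 0$. Geometric finiteness presents $O$ as the union of a compact thick part together with finitely many cusp ends $C_i=\Gamma_{p_i}\backslash H_i$ and funnel ends; by weak convergence to zero and elliptic regularity on the thick part, after a subsequence each $f_n$ may be taken supported in a single end. In a funnel $F=\Gamma_F\backslash U$, the stabilizer $\Gamma_F$ is convex cocompact and $U\subseteq X$ expands exponentially towards its ideal boundary, so one can choose a smooth $\chi$ on $U$ whose $\Gamma_F$-translates sum to $1$ and whose gradient decays at infinity; the lift $\tilde f_n=\chi\cdot(f_n\circ\pi)$ is then a Weyl sequence on $X$ for $\lambda$. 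In a cusp $C=\Gamma_p\backslash H$ the formal construction is the same, but the horospheres contract deep into the cusp, so that any $\Gamma_p$-partition-of-unity cutoff $\chi$ on the horoball has gradient growing with depth in the ambient metric.

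The main obstacle is the commutator estimate $\|[A,\chi](f_n\circ\pi)\|_{L^2(X)}\to 0$ in the cusp case. The commutator combines the second-order part $[\Delta,\chi]$, involving $\nabla_{\nabla\chi}$ and $\Delta\chi$, with the first-order contribution $\sigma_B(\nabla\chi)$. I would construct $\chi$ in Fermi coordinates along a geodesic ray asymptotic to the parabolic fixed point of $\Gamma_p$, express $\nabla\chi$ and $\Delta\chi$ in terms of Jacobi fields transverse to the ray, and use standard Rauch-type comparison, valid under the curvature bounds $-b^2\le K\le -a^2$, to control the Jacobi fields themselves. Controlling $\Delta\chi$ requires in addition derivatives of the Jacobi fields with respect to their initial data, and this is where $\|\nabla R\|_\infty<\infty$ enters, through the non-standard variational estimates flagged in the introduction. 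Finally, decomposing $f_n$ on each horosphere into its mean component and a mean-free component, a uniform spectral gap of the cross-sectional Laplacian together with $(A-\lambda)f_n\to 0$ prevents the mean-free part from carrying significant $L^2$-mass, while the mean part is $\Gamma_p$-invariant on each horosphere and interacts with $\chi$ only through the radial derivative along the ray, which is directly controlled by $\nabla f_n$. These ingredients drive the cusp commutator to zero and complete the construction of a Weyl sequence for $\lambda$ on $X$.
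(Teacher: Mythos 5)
Your overall architecture --- semi-boundedness by a quadratic form estimate, equality via \cref{propo} using $\Omega\neq\emptyset$, and localization into the geometry at infinity for the essential-spectrum inclusion --- follows the paper's strategy, and you correctly identify that $\|\nabla R\|_\infty<\infty$ enters in order to control $\Delta$ of the cutoff through variational derivatives of Jacobi fields. But the localization mechanism you propose has two genuine gaps.

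For the cusp ends, you replace the key technical tool by a horospherical mean/mean-free decomposition with a cross-sectional spectral gap argument. This is speculative and not worked out: it is unclear how the "spectral gap of the cross-sectional Laplacian" interacts with a general Laplace-type operator $A=\Delta+B$ on an arbitrary Riemannian vector bundle (not the scalar Laplacian), nor how the mean part would be matched with $\spec_{\ess}(A,X)$. The paper instead uses that $\pi_H^{-1}(H\cap U_p)\to\pi_C^{-1}(C\cap V_p)$ is a covering with the virtually nilpotent, hence amenable, parabolic group $\Gamma_p$ as deck group, and then invokes the amenable-covering criterion (\cref{pp}, from \cite{Polymerakis20a}), which supplies, for compactly supported $u_1$ downstairs, a compactly supported $u_2$ upstairs with $\|(A-\lambda)u_2\|_2\le\|(A-\lambda)u_1\|_2+\ve$. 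This is precisely what makes the naive cutoff-and-lift work despite the gradient growth you rightly flag; without it, the cusp argument does not close.

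For the funnels, you model a funnel end as $\Gamma_F\backslash U$ with $\Gamma_F$ convex cocompact and propose to lift with a $\Gamma_F$-partition-of-unity cutoff. Convex cocompact groups are in general non-amenable, and (essential) spectrum can strictly increase under non-amenable Riemannian coverings, so this transfer is not justified. The paper avoids the issue entirely: the covering $\mathcal V$ of $O$ is obtained by pulling back, via the orthogonal projection $\pi_C$ onto the convex core, a covering of $C$ by the cusp neighborhoods $V_p$ together with finitely many small metric balls $V_i$ centered in the compact thick part $C_\ve$. Each $\pi_C^{-1}(C\cap V_i)$ is an infinite tube reaching into the funnel, but its preimage in $X$ covers it with only the finite stabilizer $\Gamma_i$ as deck group, so the inclusion of spectra is immediate. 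Hence there is no separate "funnel case" in the paper's reduction, only finite-group and parabolic (amenable) cases; your handling of the funnels would need to be replaced. Finally, your reduction to "each $f_n$ supported in a single end" is not quite the paper's localization either: the paper keeps $u_n$ as is and, using \eqref{partition2} together with \cref{pus}, extracts a fixed element $V\in\mathcal V$ for which the cut-off sections $\vf_V u_n$ form the desired localized Weyl sequence.
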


A large class of operators, where \cref{sigmal} applies,
are  Schr\"odinger operators $A=\Delta+V$ with potential $V$ bounded from below.

\begin{coro}\label{sigmas}
Assume that $O$ is geometrically finite, that $\|\nabla R\|_\infty<\infty$,
and that $A=\Delta+V$ is a Schr\"o\-din\-ger operator with potential $V$ bounded from below.
Then \[\spec_{\ess}(A,O)\subseteq\spec_{\ess}(A,X).\]
Equality holds if the volume of $O$ is infinite and $A$ is uniform.
\end{coro}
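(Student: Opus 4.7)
The plan is to observe that \cref{sigmas} is an immediate specialization of \cref{sigmal}, and the main task is merely to verify that the hypotheses of the theorem are met when $B = V$ is a zeroth-order potential. There is no substantive obstacle here; the work has been done in the proof of \cref{sigmal}.

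First, I would write $A = \Delta + B$ with $B = V$, where $V$ is a field of (pointwise) symmetric endomorphisms of $E$ bounded from below (if it is not already symmetric, replace it by its symmetric part, since the skew-symmetric part does not contribute to the self-adjoint closure). Since $V$ is of order zero, its principal symbol vanishes: $\sigma_B \equiv 0$, so in particular $\|\sigma_B\|_\infty = 0 < \infty$. Consequently, in the canonical decomposition $B = \sum \sigma_B(X_i)\nabla_{X_i} + V_{\text{pot}}$ used in the statement of \cref{sigmal}, the first-order part vanishes identically and the potential $V_{\text{pot}}$ is exactly $V$, which by hypothesis is bounded from below.

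Second, the remaining hypothesis $\|\nabla R\|_\infty < \infty$ is assumed in the corollary itself, and $O$ is geometrically finite by assumption. Therefore all hypotheses of \cref{sigmal} are in force, and the inclusion $\spec_{\ess}(A,O) \subseteq \spec_{\ess}(A,X)$ follows directly. Under the additional assumptions that $\vol(O) = \infty$ and $A$ is uniform, the corresponding equality statement of \cref{sigmal} immediately gives equality here as well.

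In short, the only step is a bookkeeping check that a Schr\"odinger operator falls into the Laplace-type framework of \cref{sigmal} with trivial first-order symbol part. The expected obstacle is therefore nil; all analytical work (Jacobi field estimates, cusp/horoball analysis, variational derivative bounds requiring $\|\nabla R\|_\infty < \infty$) is already packaged inside \cref{sigmal}.
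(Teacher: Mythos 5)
Your proof is correct and follows exactly the same route as the paper, which states \cref{sigmas} as an immediate specialization of \cref{sigmal} with $B=V$ of order zero, so that $\sigma_B\equiv0$ and the potential in the sense of \cref{sigmal} is $V$ itself. The aside about symmetrizing $V$ is harmless but unnecessary: since $\sigma_B=0$, the paper's formula $V_-=\tfrac12\sum(\nabla_{X_i}\sigma_B)(X_i)$ forces $V$ to be symmetric whenever $A$ is formally self-adjoint.
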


%%%%%%%%%%%%%%%%%%%%%%%%%%%%%%%%%%%%%%%%%%%%%%
\subsection{Hyperbolic orbifolds}
\label{subhype}
%%%%%%%%%%%%%%%%%%%%%%%%%%%%%%%%%%%%%%%%%%%%%%
We say that an orbifold $O=\Gamma\backslash X$ is \emph{hyperbolic}
if $X$ is one of the hyperbolic spaces $X_\F^\ell$, where $m=d\ell$ with $d=\dim_\R\F$.
Since $\nabla R$ vanishes identically on hyperbolic spaces,
all the above results apply to hyperbolic orbifolds.

Let $X=X_\F^\ell$ and write $X=G/K$, where $(G,K)$ is a Riemannian symmetric pair,
$G$ a semi-simple Lie group, and $K$ the stabilizer in $G$ of a point $x_0\in X$.
Let $\pi$ be an orthogonal representation of $K$ on a finite-dimensional Euclidean space $E_0$
and $E_\pi$ be the \emph{homogeneous Riemannian vector bundle} over $X$ associated to $\pi$.
(See \cref{subcasi} for more details.)
Denote by $\nabla$ the metric connection on $E_\pi$ induced by the Levi-Civita connection of $X$.

Let $\Delta_\pi$ be the \emph{Casimir operator} on $E_\pi$.
In terms of an orthonormal basis $(Z_i)$ of $\mathfrak g$ with respect to the Killing form $B$ of $G$,
we have
\begin{align}\label{delpi}
	\Delta_\pi = - \sum B(Z_i,Z_i)\pi_*(Z_i)\pi_*(Z_i).
\end{align}
The Casimir operator is a Schr\"odinger operator with respect to $\nabla$.
Note that $\nabla$ and $\Delta_\pi$ are $G$-equivariant with respect to the left-action of $G$ on $E_\pi$.
In particular, the potential of $\Delta_\pi$ is bounded from below, hence also $\Delta_\pi$.

Let $\Gamma\subseteq G$ be a discrete subgroup and set $O=\Gamma\backslash X$,
a hyperbolic orbifold.
Let $\rho$ be an orthogonal representation of $\Gamma$ on $E$
such that $\pi(k)$ and $\rho(g)$ commute, for all $k\in K$ and $g\in\Gamma$.
Since $\Gamma$ acts on $E_\pi$ from the left via $\rho$,
preserving Riemannian metric, connection, and Casimir operator,
we can push down $E_\pi$ to the Riemannian vector bundle $E_{\pi,\rho}=\Gamma\backslash E_\pi$
over $O$ with metric connection $\nabla$ and \emph{twisted Casimir operator} $\Delta_{\pi,\rho}$.
Since $\Delta_{\pi,\rho}$ lifts to $\Delta_\pi$ and $\Delta_\pi$ is $G$-invariant,
$\Delta_{\pi,\rho}$ is a uniform differential operator as defined further up.

\begin{coro}\label{sigmac}
Let $O=\Gamma\backslash G/K$ be geometrically finite with infinite volume.
Then the twisted Casimir operator $\Delta_{\pi,\rho}$ is bounded from below and
\begin{align*}
	\spec_{\ess}(\Delta_{\pi,\rho},O) = \spec_{\ess}(\Delta_\pi,X) = \spec(\Delta_\pi,X).
\end{align*}
\end{coro}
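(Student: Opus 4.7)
The plan is to assemble Corollary~\ref{sigmas} (applied downstairs on $O$) and Proposition~\ref{propo} (applied upstairs on $X$), so the work is essentially hypothesis-checking. Corollary~\ref{sigmas} will deliver the boundedness from below of $\Delta_{\pi,\rho}$ together with the first equality $\spec_{\ess}(\Delta_{\pi,\rho}, O) = \spec_{\ess}(\Delta_\pi, X)$, while Proposition~\ref{propo} will give the remaining equality $\spec_{\ess}(\Delta_\pi, X) = \spec(\Delta_\pi, X)$.

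To apply Corollary~\ref{sigmas} with $A = \Delta_{\pi,\rho}$, I would check its four hypotheses in turn. The orbifold $O$ is geometrically finite by assumption. Since $X = X_\F^\ell$ is a rank-one Riemannian symmetric space, its curvature tensor is parallel, so $\|\nabla R\|_\infty = 0$. The excerpt already records that $\Delta_\pi$ on $X$ is a Schr\"odinger operator with respect to $\nabla$ and that its potential is bounded from below; because this potential is $G$-invariant (as $\Delta_\pi$ and $\nabla$ both are), its pointwise operator norm is in fact constant on $X$, so the pushdown---which is the potential of $\Delta_{\pi,\rho}$---is a bounded endomorphism field on $O$, making $\Delta_{\pi,\rho}$ a Schr\"odinger operator with potential bounded from below. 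Uniformity of $\Delta_{\pi,\rho}$ is recorded in the excerpt, and the volume of $O$ is infinite by hypothesis, so the equality case of Corollary~\ref{sigmas} applies and yields both the boundedness from below and the first equality.

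For Proposition~\ref{propo} I need $\Omega \neq \emptyset$ together with uniformity of $\Delta_\pi$ on $X$ (the latter is immediate from $G$-equivariance). Establishing $\Omega \neq \emptyset$ is the one step not handed to me directly by hypothesis and is what I expect to be the main (modest) obstacle. It should follow from the classical structure theory of geometrically finite groups of isometries of a rank-one symmetric space: such a $\Gamma$ satisfies $\Lambda = X_\iota$ if and only if $\Gamma$ is a lattice, i.e., $O$ has finite volume. Since $O$ has infinite volume by hypothesis, $\Omega = X_\iota \setminus \Lambda \neq \emptyset$. Proposition~\ref{propo} then supplies $\spec_{\ess}(\Delta_\pi, X) = \spec(\Delta_\pi, X)$, completing the chain of equalities.
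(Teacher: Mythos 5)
Your proof is correct and follows essentially the same route the paper has in mind: the paper gives no separate argument for \cref{sigmac} but, as the parenthetical in \cref{hodge} confirms, it is meant to follow from \cref{sigmas} (equality case) combined with \cref{propo}, using exactly the facts you check ($\nabla R\equiv0$ on a symmetric space, the potential of the Casimir is a parallel and hence bounded endomorphism field, uniformity, and infinite volume $\Leftrightarrow\Omega\neq\emptyset$ for geometrically finite $\Gamma$, the last being stated and used in the same way in the proof of \cref{bottom}). One small attribution point: the boundedness from below of $\Delta_{\pi,\rho}$ over $O$ is part of the conclusion of \cref{sigmal} rather than appearing explicitly in the statement of \cref{sigmas}, so it is cleaner to cite \cref{sigmal} for that clause.
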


\begin{exa}\label{hodge}
If $\pi$ is the isotropy representation of $K$ on the Euclidean space of alternating $k$-forms on $T_{x_0}X$,
where $0\le k\le m$, then $E_\pi=\Lambda^kX$ is the bundle of $k$-forms over $X$
and $\Delta_\pi=(d+d^*)^2$, the \emph{Hodge-Laplacian}, also denoted by $\Delta_k$. 
The spectrum of $\Delta_k$ on $X$ has been determined explicitly,
except--as far as we know--for the hyperbolic octonionic plane \cite{Donnelly81,Pedon98,Pedon99,Pedon05}.
In the case where $O=\Gamma\backslash G/K$ is geometrically finite with infinite volume,
we get from \cref{sigmac} (or also \cref{sigmas}) and loc.\,cit.\ that
\begin{align*}
	\spec_{\ess}(\Delta_{k,\rho},O) = \spec(\Delta_k,X)
	= \begin{cases}
	[\delta_k,\infty) &\text{if $k\ne m/2$},\\
	\{0\}\cup[\delta_k,\infty) &\text{if $k=m/2$,}
	\end{cases}
\end{align*}
where $\delta_k$ is the bottom of the continuous spectrum of $\Delta_k$ on $X=X_\F^\ell$.
If the Riemannian metric on $X$ is normalized so that its maximal sectional curvature is $-1$,
then we have the following explicit values for $\delta_k$:
\begin{enumerate}
\item\label{delta0}
$\delta_0=(m+d-2)^2/4$, the asymptotic volume growth of $X_\F^\ell$;
\item\label{deltakr}
for $\F=\R$, we have $\ell=m$,
$\delta_{k}=(k-(m-1)/2)^2$ for $0\le k\le m/2$,
and $\delta_{k}=(k-(m+1)/2)^2$ for $m/2\le k\le m$ \cite{Donnelly81,Pedon98};
\item\label{deltakc}
for $\F=\C$, we have $2\ell=m$,
$\delta_{k}=(k-\ell)^2$ for $0\le k\le m$, $k\ne\ell$,
and $\delta_{\ell}=1$ \cite{Pedon99}.
\end{enumerate}
For $\F=\H$, the formulas for the $\delta_{k}$ are a bit more involved \cite{Pedon05}.
For any of the four $\F$, $\delta_k=0$ occurs only in the case $\F=\R$ and $k=(m\pm1)/2$.
\end{exa}

\begin{exa}\label{dolbeault}
In the case of complex hyperbolic orbifolds $O=\Gamma\backslash G/K$,
if $\pi$ is the isotropy representation of $K$ on the Euclidean space of alternating $(p,q)$-forms on $T_{x_0}X$,
where $0\le p+q\le m$, then $E_\pi=\Lambda^kX$ is the bundle of $(p,q)$-forms over $X$
and $\Delta_\pi=2(\bar\partial+\bar\partial^*)^2$, the \emph{Dolbeault-Laplacian}, also denoted by $\Delta_{p,q}$.
If $O$ is geometrically finite with infinite volume
and the Riemannian metric on $X=X_\C^\ell$ is normalized so that its maximal sectional curvature is $-1$,
then
\begin{align*}
	\spec_{\ess}(\Delta_{p,q,\rho},O) = \spec(\Delta_{p,q},X)
	= \begin{cases}
	[(p+q-\ell)^2,\infty) &\text{if $p+q\ne\ell$},\\
	\{0\}\cup[1,\infty) &\text{if $p+q=\ell$,}
	\end{cases}
\end{align*}
by \cref{sigmac} (or also \cref{sigmas}) and \cite[Corollary 4.2]{Pedon99}.
\end{exa}

Suppose now that $A$ is a $G$-invariant elliptic differential operator on $E_\pi$ over $X=G/K$
of order one, which is symmetric on $C^\infty_c(X,E_\pi)$.
Then the principal symbol $\sigma_A$ of $A$ is a $G$-invariant, hence parallel
one-form on $X$ with values in the bundle of skew-symmetric endomorpohisms of $E_\pi$.
In particular, $A$ is essentially self-adjoint.
Writing $A$ in the form $A=\sum\sigma_A(X_i)\nabla_{X_i}+V$ as further up,
we get that the part $\sum\sigma_A(X_i)\nabla_{X_i}$ of $A$ of order one
is symmetric on $C^\infty_c(X,E_\pi)$ and conclude that the potential $V$ is a $G$-invariant,
hence parallel field of symmetric endomorphisms of $E_\pi$.
For $\Gamma$ and $\rho$ as above such that $\rho$ is compatible with $\sigma_A$ and $V$
in the sense of \cref{subcasi}, we get a uniform and essentially self-adjoint
differential operator $A_\rho$ on $E_{\pi,\rho}=\Gamma\backslash E_\pi$ over $O=\Gamma\backslash G/K$,
to which \cref{sigmaf} applies.

\begin{coro}\label{sigmad}
Let $O=\Gamma\backslash G/K$ be geometrically finite with infinite volume.
Then $A_{\rho}$ is essentially self-adjoint with
\begin{align*}
	\spec_{\ess}(A_{\rho},O) = \spec_{\ess}(A,X) = \spec(A,X).
\end{align*}
\end{coro}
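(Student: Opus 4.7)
My plan is to derive Corollary \ref{sigmad} from Theorem \ref{sigmaf} (for the equality of essential spectra of $A_\rho$ on $O$ and $A$ on $X$) together with Proposition \ref{propo} (for the equality of essential and full spectrum of $A$ on $X$). To verify the hypotheses, observe first that $\sigma_A$ is $G$-invariant on $X=G/K$ and hence parallel, so in particular $\|\sigma_A\|_\infty<\infty$; the same bound descends to the symbol of $A_\rho$ on $O$. By the remark preceding Theorem \ref{sigmaf}, this alone suffices for essential self-adjointness of both $A$ on $E_\pi$ over $X$ and $A_\rho$ on $E_{\pi,\rho}$ over $O$. The compatibility of $\rho$ with $\sigma_A$ and $V$ described in \cref{subcasi} moreover ensures that $A$ descends to $A_\rho$, with pullback again $A$, and since $G$ itself acts uniformly by isometries on $X$, the operator $A_\rho$ is uniform in the sense of the Introduction.

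With $O$ geometrically finite of infinite volume, $A_\rho$ of first order and uniform, and $\|\sigma_{A_\rho}\|_\infty<\infty$, Theorem \ref{sigmaf} immediately yields $\spec_{\ess}(A_\rho,O)=\spec_{\ess}(A,X)$. The second equality $\spec_{\ess}(A,X)=\spec(A,X)$ is then the first conclusion of Proposition \ref{propo} applied to the uniform operator $A$, subject only to verifying its hypothesis $\Omega\ne\emptyset$.

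This last step is where the main (and minor) obstacle lies. For a geometrically finite discrete subgroup $\Gamma$ of the isometry group of a rank-one symmetric space, the limit set $\Lambda$ is properly contained in $X_\iota$ precisely when the quotient has infinite volume; this is classical for Kleinian groups and extends to the other rank-one cases via Bowditch's framework \cite{Bowditch95}. Alternatively, I could bypass $\Omega\ne\emptyset$ by arguing directly: any isolated $L^2$-eigenvalue of finite multiplicity of $A$ on $X$ would give a finite-dimensional $G$-invariant eigenspace, which must carry the trivial representation of the semi-simple non-compact group $G$ and therefore consist of $G$-invariant sections of $E_\pi$, i.e.\ sections of constant pointwise norm; such sections lie in $L^2$ only if they vanish, since $X$ has infinite volume.
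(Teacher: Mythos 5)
Your argument is correct and follows the paper's own (implicit) route: verify the hypotheses of Theorem~\ref{sigmaf} using the $G$-invariance of $\sigma_A$ (hence parallel, hence bounded) and the compatibility of $\rho$, then invoke the equality case of Theorem~\ref{sigmaf} and Proposition~\ref{propo}, the latter being applicable since for geometrically finite $\Gamma$ infinite volume is equivalent to $\Omega_\Gamma\ne\emptyset$, as the paper itself notes in the proof of Theorem~\ref{bottom}. Your alternative representation-theoretic argument for $\spec(A,X)=\spec_{\ess}(A,X)$ (no nonzero $G$-invariant $L^2$-sections on infinite-volume $X$, so the discrete spectrum is empty) is also sound and is a nice self-contained way to bypass Proposition~\ref{propo} for that sub-step, though the paper does not take this route.
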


\begin{exa}\label{baer}
Write $X=H_\R^2=G/K$, where $G=\SL(2,\R)$
and $K=\SO(2)$ is the stabilizer of a point $x_0\in X$.
Then $G$ can be identified with the unique spin structure over $X$,
and the isotropy action of $K$ on $T_{x_0}X$ is a twofold cover of the action of $\SO(T_{x_0}X)$
such that $K\cong\Spin(2)$.
The Dirac operator $D$ on the spinor bundle $E_\pi$,
the bundle associated to the spin-representation $\pi$ of $K$, is $G$-invariant
and of the form $D=\sum X_i\nabla_{X_i}$,
where we recall that the symbol $\sigma_D$ of $D$ is given by Clifford multiplication,
$\sigma_D(X)u=Xu$.
We have
\begin{align*}
	\spec_{\ess}(D,X) = \spec(D,X) = \R;
\end{align*}
see \cite[p.\,441]{Baer99} for references and discussion.
Hence $\spec_{\ess}(D_{\rho},O)=\R$ as long as $O=\Gamma\backslash X$
is geometrically finite with infinite area;
compare with the corresponding remarks in \cref{subold}.
\end{exa}

%%%%%%%%%%%%%%%%%%%%%%%%%%%%%%%%%%%%%%%%%%%%%%
\subsection{Earlier work and comments}
\label{subold}
%%%%%%%%%%%%%%%%%%%%%%%%%%%%%%%%%%%%%%%%%%%%%%
In \cite{McKean70}, McKean showed that the spectrum of the Laplacian $\Delta_0$ on $X$
is bounded from below by $(m-1)^2/4a^2$ (not assuming a lower bound on the curvature).
\cref{bottom} implies that the same bound holds for the essential spectrum
of $\Delta_0$ on geometrically finite orbifolds.
For geometrically finite manifolds,
this estimate is contained in Hamenst\"adt's \cite[Theorem]{Hamenstaedt04}.

In the case of hyperbolic orbifolds $\Gamma\backslash X_\F^\ell$
with Riemannian metric on $X_\F^\ell$ normalized so that its maximal sectional curvature is $-1$,
\cref{sigmac} (or \cref{sigmas}) implies that $\delta_0=(m+d-2)^2/4$ is a lower bound
for the essential spectrum of $\Delta_0$ on geometrically finite orbifolds.
For geometrically finite manifolds,
this estimate is contained in Hamenst\"adt's \cite[Corollary]{Hamenstaedt04}.

The case of $\Delta_k$ on geometrically finite real hyperbolic manifolds in \cref{hodge}
is \cite[Theorem 1.11]{MazzeoPhillips90} of Mazzeo and Phillips,
except for the additional assertion in loc.\,cit.\
that the essential spectrum of $\Delta_{m/2}$ is equal to $[1/4,\infty)$
in the case where $m$ is even and the manifold is non-compact, but of finite volume.
The case of $\Delta_0$ on geometrically finite hyperbolic manifolds in \cref{hodge}
is \cite[Remark 1.2]{Li20} of Li.

In \cite{BunkeOlbrich00},
Bunke and Olbrich obtain the Plancherel decomposition of $L^2(G,\rho)$
for convex cocompact hyperbolic orbifolds
(with an extra condition on the critical exponent of $\Gamma$ in the case of the octonionic plane),
where $\rho$ is a finite-dimensional orthogonal representation of $\Gamma$.
Recall here that convex cocompact means that the convex core is compact
and that convex cocompact orbifolds are geometrically finite.

In \cite[Theorems B and C]{CarronPedon04},
Carron and Pedon obtain lower bounds for the spectrum of $\Delta_k$
on hyperbolic manifolds respectively determine it completely, not assuming geometric finiteness,
but an upper bound for the critical exponent of $\Gamma$.
Note however that manifolds with negatively pinched sectional curvature
and sufficiently small critical exponent are convex cocompact \cite[Theorem 1.2]{LiuWang20}.

Concerning the assumption of infinite volume in our results,
consider the Dirac operator on $X=H_\R^2$ as in \cref{baer}.
If $O=\Gamma\backslash X$ is non-compact, but of finite area,
then the essential spectrum of $D_{\rho}$ depends on $\rho$.
Namely, $\spec_{\ess}(D_{\rho},O)=\emptyset$ if the spin structure is non-trivial
along the cusps of $O$ and $\spec_{\ess}(D_{\rho},O)=\R$ otherwise;
see B\"ar's \cite[Theorem 1]{Baer99},
which also covers the case of real hyperbolic spin manifolds of higher dimension and finite volume.
See also Lott's \cite[Theorem 2]{Lott01} and \cite[Theorem 5]{Lott02} with corresponding,
but less explicit results on Hodge-Laplacians and Dirac type operators.

Our approach was motivated by the earlier version \cite{BallmannPolymerakis20b}
of the present text and by \cite{Li20}.
In both sources, finite open coverings and subordinate partititons of unity
adapted to the geometry of $O$ at infinity play an important role.
As in \cite[Section 5.3]{Li20},
we use square roots of the functions belonging to these partitions of unity as cutoff functions.

%%%%%%%%%%%%%%%%%%%%%%%%%%%%%%%%%%%%%%%%%%%%%%
\subsection{Structure of the article}
\label{substr}
%%%%%%%%%%%%%%%%%%%%%%%%%%%%%%%%%%%%%%%%%%%%%%
In our notation concerning geometrically finite orbifolds, we mostly follow \cite{Bowditch95}.
After preparations in \cref{secpat},
we review, in \cref{secgefi}, some important features of geometrically finite orbifolds.
In \cref{secpu}, we obtain specific finite open coverings of $O$
and associated partitions of unity,
which are at the heart of the proof of the main results in \cref{secray}.
\cref{secjac2} is devoted to a somewhat sophisticated comparison result
about variational derivatives of Jacobi fields.
It is here that the assumption on $\nabla R$ in \cref{sigmal} comes in.
In \cref{secmoll},
we extend a smoothing result for convex sets of Parkkonen-Paulin \cite{ParkkonenPaulin12},
where our emphasis is on uniformity in the case where the convex set may be non-compact. 
 In \cref{symmetry},
 we discuss the symmetry of second derivatives of $C^{1,1}$-functions almost everywhere,
 an issue, for which we could not identify a suitable reference. 

%%%%%%%%%%%%%%%%%%%%%%%%%%%%%%%%%%%%%%%%%%%%%%
\section{Preliminaries and terminology}
\label{secpat}
%%%%%%%%%%%%%%%%%%%%%%%%%%%%%%%%%%%%%%%%%%%%%%

Throughout the article, we let $X$ be a complete and simply connected Riemannian manifold
with sectional curvature $-b^2\le K=K_X\le-a^2<0$.
We denote by $X_\iota$ the \emph{ideal boundary} of $X$ and $X_c=X\cup X_\iota$
the compactification of $X$ with respect to the cone topology \cite{EberleinONeill73}.

For a discrete group $\Gamma$ of isometries of $X$,
we let $\Lambda=\Lambda_\Gamma$ be the \emph{limit set of $\Gamma$},
a closed and $\Gamma$-invariant subset of $X_\iota$.
Then $\Omega=\Omega_\Gamma=X_\iota\setminus\Lambda$
is called the \emph{domain of discontinuity of $\Gamma$}.
The action of $\Gamma$ on $X\cup\Omega$ is properly discontinuous
and $M_c(\Gamma)=\Gamma\backslash(X\cup\Omega)$ is a topological orbifold.

%%%%%%%%%%%%%%%%%%%%%%%%%%%%%%%%%%%%%%%%%%%%%%
\subsection{Elementary groups of isometries}
\label{susegi}
%%%%%%%%%%%%%%%%%%%%%%%%%%%%%%%%%%%%%%%%%%%%%%
A discrete group $\Gamma$ of isometries of $X$ is said to be \emph{elementary}
if it is of one of the following three types:
\begin{enumerate}
\item\label{ell}
\emph{elliptic}: $\Gamma$ fixes a point in $X$;
\item\label{lox}
\emph{loxodromic}: $\Gamma$ fixes a geodesic of $X$ as a set and no point in $X$;
\item\label{par}
\emph{parabolic}: $\Gamma$ fixes a point $p\in X_\iota$ and horospheres about $p$ as sets,
but fixes no other point of $X_c$.
\end{enumerate}
With respect to \eqref{ell},
recall that a discrete group $\Gamma$ of isometries of $X$ fixes a point of $X$
if and only if it is finite.
Therefore loxodromic and parabolic groups are infinite.
By \cite[Propositions 4.1 and 4.2]{Bowditch95},
parabolic groups are finitely generated, virtually nilpotent, and contain parabolic elements.

The following result is discussed in \cite[§7E]{BallmannGromovSchroeder85} in the case,
where $\Gamma$ acts freely on $X$.
As stated, it is Proposition 3.1.1 in \cite{Bowditch95}.

\begin{prop}\label{lemele}
Discrete virtually nilpotent groups of isometries of $X$ are elementary.
\end{prop}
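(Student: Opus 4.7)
The plan is to reduce to the case where $\Gamma$ is itself nilpotent, to prove that case by induction on $\dim X$ using the non-trivial center, and then to transfer the conclusion to $\Gamma$. If $\Gamma$ is finite, Cartan's fixed-point theorem applied to any $\Gamma$-orbit gives a $\Gamma$-fixed point in $X$, so $\Gamma$ is elliptic. Otherwise, replacing a nilpotent finite-index subgroup of $\Gamma$ by the intersection of its $\Gamma$-conjugates produces an infinite nilpotent normal subgroup $N\triangleleft\Gamma$ of finite index, and it suffices to prove that $N$ is elementary.

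I would then prove by induction on $\dim X$ that every discrete nilpotent group $N$ of isometries of $X$ is elementary. Since $Z(N)\neq\{1\}$ (assuming $N$ non-trivial), pick $z\in Z(N)\setminus\{1\}$; because $N$ centralizes $z$, it preserves $\mathrm{Fix}_{X_c}(z)$. By the classification of isometries in strictly negative curvature, $z$ is either (i) loxodromic, with $\mathrm{Fix}_{X_c}(z)=\{p^+,p^-\}$, in which case $N$ preserves the unique geodesic joining $p^+$ and $p^-$ and is loxodromic; (ii) parabolic, with $\mathrm{Fix}_{X_c}(z)=\{p\}$, in which case $N$ fixes $p$ and cannot fix any other point of $X_c$ without contradicting the parabolicity of $z$, so $N$ is parabolic; or (iii) elliptic.

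In case (iii), $F_z:=\mathrm{Fix}_X(z)$ is a non-empty, closed, totally geodesic submanifold of strictly smaller dimension, itself a Hadamard manifold whose sectional curvature is bounded above by $-a^2$ when $\dim F_z\ge 2$. The kernel of the restriction $N\to\mathrm{Isom}(F_z)$ consists of isometries fixing $F_z$ pointwise, hence is a discrete subgroup of a compact isotropy group and so finite; the image is discrete and nilpotent. When $\dim F_z\le 1$, elementarity is immediate (a fixed point or a preserved geodesic); otherwise the inductive hypothesis furnishes an invariant point, geodesic, or ideal point in $F_z$ or its ideal boundary. Since $\mathrm{Fix}_{X_c}(z)=F_z\cup\partial_\infty F_z$, this invariant object is also $N$-invariant in $X_c$ with no room for additional common fixed points, so $N$ is elementary.

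To transfer elementarity from $N$ to $\Gamma$ I would use normality of $N$ together with uniqueness of the $N$-invariant structure. In the elliptic case, the $N$-fixed set $F_N\subseteq X$ is a non-empty, closed, convex, $\Gamma$-invariant subset, and Cartan applied to the finite quotient $\Gamma/N$ produces a $\Gamma$-fixed point. In the loxodromic case, the $N$-invariant geodesic is unique (being the axis of any loxodromic element of $N$), hence $\Gamma$-invariant. In the parabolic case, the unique $N$-fixed ideal point must be $\Gamma$-fixed, and uniqueness forbids $\Gamma$ from acquiring further ideal fixed points. The main obstacle, I expect, is case (iii) of the nilpotent step: verifying discreteness of the restricted action on $F_z$ and cleanly lifting elementarity from $F_z$ back to $X_c$ without introducing spurious additional $N$-fixed points.
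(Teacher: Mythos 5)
The paper does not actually prove this proposition; it cites it as Proposition 3.1.1 of Bowditch \cite{Bowditch95}, with a related discussion (in the torsion-free case) in \cite[\S 7E]{BallmannGromovSchroeder85}. So there is no ``paper's own proof'' to compare against, and your sketch has to be judged on its own terms. The overall strategy---pass to an infinite normal nilpotent subgroup $N$ of finite index, pick $z\in Z(N)\setminus\{1\}$, observe that $N$ preserves $\mathrm{Fix}_{X_c}(z)$, case-split on the type of $z$, handle the elliptic case by induction on $\dim X$, and transfer elementarity from $N$ to $\Gamma$ via normality and uniqueness of the preserved object---is a sound and standard line of attack, and your worry about discreteness of the induced action on $F_z$ is in fact easily dispatched: if the images $\bar n_i\to\mathrm{id}$ in $\mathrm{Isom}(F_z)$, then $n_i x\to x$ for any fixed $x\in F_z$, and proper discontinuity of $N$ on $X$ forces $\{n_i\}$ to be finite, hence eventually in $\ker(N\to\mathrm{Isom}(F_z))$.

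There is, however, one genuine gap. In the paper's definition a parabolic group must fix $p\in X_\iota$, fix \emph{horospheres about $p$ as sets}, and fix no other point of $X_c$. In your case (ii), and again in the parabolic branch of the transfer to $\Gamma$, you verify the first and third conditions but never address the horosphere condition. It does not come for free from ``fixes $p$ and nothing else'': an isometry fixing $p$ shifts horospheres by a Busemann cocycle $\tau$, and one must argue that $\tau\equiv 0$ on $N$. The standard route is: if $\tau(n)\neq 0$ for some $n\in N$, then $n$ is loxodromic with $p$ as one endpoint of its axis, and discreteness of $N$ (two loxodromics in a discrete group cannot share exactly one endpoint) then forces $N$ to fix the other endpoint as well, contradicting that $p$ is the only fixed point. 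This step should be made explicit. Finally, a small stylistic remark: when $N$ is finitely generated one can pass to a torsion-free finite-index subgroup and pick an infinite-order central element, which has no fixed point in $X$ and thereby eliminates case (iii) and the induction on dimension altogether; but since the proposition as stated does not assume finite generation, your inductive handling of the elliptic case is a reasonable way to avoid that assumption.
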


Recall also that $\Gamma$ is elementary if and only if $|\Lambda_\Gamma|<\infty$,
where $|\Lambda_\Gamma|=0$ if $\Gamma$ is elliptic, $|\Lambda_\Gamma|=1$ if $\Gamma$ is parabolic,
and $|\Lambda_\Gamma|=2$ if $\Gamma$ is loxodromic.

%%%%%%%%%%%%%%%%%%%%%%%%%%%%%%%%%%%%%%%%%%%%%%
\subsection{Distance to convex subsets}
\label{subconv}
%%%%%%%%%%%%%%%%%%%%%%%%%%%%%%%%%%%%%%%%%%%%%%
Let $C$ be a closed and convex subset of $X$.
For the convenience of the reader,
we collect some results about the distance function to $C$.

For each $x\in X$, there is a unique point $\pi_C(x)\in C$ such that
\begin{align}\label{propi}
	d(x,C) = d(x,\pi_C(x)).
\end{align}
We call $\pi_C\colon X\to C$ the \emph{(orthogonal) projection to $C$}
and let $f\colon O\to\R$ be the distance function to $C$, $f(x)=d(x,C)$.
Then $f$ is convex and admits Lipschitz constant one.
Furthermore, the sublevels $C_r=\{f\le r\}$ of $f$ are convex for all $r>0$.

For each $x\in X\setminus C$, there is a unique unit speed geodesic $c_x\colon[0,\infty)\to X$
from $c_x(0)=\pi_C(x)\in C$ through $x=c_x(r)$, where $r=d(x,C)$.
By definition $f(c_x(t))=t$ for all $t\ge0$.
Since $f$ admits Lipschitz constant one, the first variation formula implies therefore that
\begin{align*}
	f(c(s)) - f(x)
	= \la c_x'(r),c'(0)\ra s + o(s),
\end{align*}
for any smooth curve $c$ through $x$ and sufficiently small $s$.
By uniqueness, $c_x'(r)$ depends continuously on $x$,
and hence $f$ is $C^1$ on $X\setminus C$ with gradient $\nabla f|_x=c_x'(r)$.
Now a classical argument from convex geometry extends to the Riemannian setting
and shows that $f$ is $C^{1,1}$ on $X\setminus C$.
More precisely, $f$ is twice differentiable exactly at the points of $X\setminus C$
at which $\pi_C$ is differentiable.

\begin{proof}
The map $\Phi\colon TX\to X\times X$, $\Phi(v)=(p(v),\exp(v))$, is a diffeomorphism,
where $p$ denotes the projection to the foot point.
Since
\begin{align}\label{lcpi}
	\nabla f(x) = \frac{-1}{|\Phi^{-1}(x,\pi_C(x))|} \Phi^{-1}(x,\pi_C(x))
\end{align}
for any $x\in X\setminus C$ and $\pi_C$ is Lipschitz continuous,
we conclude that $\nabla f$ is $C^{0,1}$ on $X\setminus C$.
Moreover, \eqref{lcpi} also implies that $f$ is twice differentiable
exactly at the points of $X\setminus C$ at which $\pi_C$ is differentiable.
\end{proof}

We say that $x\in X\setminus C$ is \emph{regular} if $f$ is twice differentiable at $x$
and the second derivative $\nabla^2f|_x$ is symmetric.
From \cref{symmetry}, we obtain that almost every point of $X\setminus C$ is regular.

\begin{lem}\label{lemtec}
Let $V=V(s)$ be a curve of tangent vectors on $X$ which is differentiable at $s=0$.
For all $s$, let $\gamma_s$ be the geodesic with initial velocity $V(s)$.
Then $J(t)=\partial\gamma_s(t)/\partial s|_{s=0}$ exists for all $t\in\R$,
and $J$ is the Jacobi field along $\gamma_0$ such that
\begin{align*}
  J'
  = \left.\frac{\nabla}{\partial t}\frac{\partial\gamma}{\partial s}\right|_{s=0}
  = \left.\frac{\nabla}{\partial s}\frac{\partial\gamma}{\partial t}\right|_{s=0}.
\end{align*}
\end{lem}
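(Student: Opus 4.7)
The plan is to reduce the statement to the standard case of smooth variations through geodesics by means of the geodesic flow on $TX$. Let $p\colon TX\to X$ be the bundle projection and $\phi_t\colon TX\to TX$ the time-$t$ geodesic flow, a smooth diffeomorphism for every $t\in\R$. Then
\[
  \gamma_s(t) = p\bigl(\phi_t(V(s))\bigr).
\]
Since $V$ is differentiable at $s=0$ and $p\circ\phi_t$ is smooth, the map $s\mapsto\gamma_s(t)$ is differentiable at $s=0$ for every $t\in\R$, and
\[
  J(t) = \left.\frac{\partial\gamma_s(t)}{\partial s}\right|_{s=0} = d(p\circ\phi_t)|_{V(0)}\bigl(V'(0)\bigr) \in T_{\gamma_0(t)}X
\]
exists and depends only on the $1$-jet $\bigl(V(0),V'(0)\bigr)$ of $V$ at $s=0$.

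Next I would replace $V$ by any smooth curve $\tilde V\colon(-\ve,\ve)\to TX$ with $\tilde V(0)=V(0)$ and $\tilde V'(0)=V'(0)$, for instance the affine curve $s\mapsto V(0)+sV'(0)$ in a linear trivialization of $TX$ near $V(0)$. The resulting map $\tilde\gamma(s,t)=p(\phi_t(\tilde V(s)))$ is then jointly smooth on $(-\ve,\ve)\times\R$, and each $\tilde\gamma_s$ is a geodesic with initial velocity $\tilde V(s)$. By the $1$-jet argument above, $J(t)=\partial\tilde\gamma_s(t)/\partial s|_{s=0}$ for every $t\in\R$.

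The classical theory of variations of smooth families of geodesics now applies to $\tilde\gamma$: the vector field $J$ is a Jacobi field along $\gamma_0$, its covariant derivative along $\gamma_0$ is $(\nabla/\partial t)(\partial\tilde\gamma/\partial s)|_{s=0}$, and the vanishing torsion of the Levi-Civita connection gives
\[
  \left.\frac{\nabla}{\partial t}\frac{\partial\tilde\gamma}{\partial s}\right|_{s=0} = \left.\frac{\nabla}{\partial s}\frac{\partial\tilde\gamma}{\partial t}\right|_{s=0}.
\]
To transfer this back to $\gamma$, I would observe that the velocity $\partial\gamma/\partial t(s,t)=\phi_t(V(s))$ is, for each fixed $t$, a curve in $TX$ above $s\mapsto\gamma_s(t)$ which is differentiable at $s=0$; hence the covariant derivative $(\nabla/\partial s)(\partial\gamma/\partial t)|_{s=0}$ is intrinsically defined for every $t$ and, again by the $1$-jet argument, agrees with the corresponding quantity for $\tilde\gamma$. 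The analogous comment applies to $(\nabla/\partial t)(\partial\gamma/\partial s)|_{s=0}$, so the chain of identities for $\tilde\gamma$ carries over verbatim to $\gamma$.

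The main obstacle is conceptual rather than computational: because $V$ is merely assumed differentiable at a single value of $s$, the variation $(s,t)\mapsto\gamma_s(t)$ is not a priori jointly smooth, so one has to justify that each covariant derivative at $s=0$ appearing in the statement is well defined and independent of the chosen smooth extension. The geodesic-flow formulation makes the dependence manifestly a $1$-jet dependence, which is what reduces the lemma to the standard smooth case.
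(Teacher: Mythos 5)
Your proof is correct and follows essentially the same route as the paper's: expressing $\gamma_s(t)=p(\phi_t(V(s)))$ via the geodesic flow, observing that $J(t)$ depends only on the $1$-jet of $V$ at $s=0$, and then replacing $V$ by a smooth curve with the same $1$-jet to invoke the standard smooth-variation theory. You spell out a bit more carefully why the covariant derivatives at $s=0$ are well defined and agree with those of the smooth replacement, but the key idea and reduction are identical.
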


The point of this lemma is that, in the usual setup, the curve $V$ is assumed to be smooth.
Then $\nabla\partial\gamma/\partial s\partial t=\nabla\partial\gamma/\partial t\partial s$,
and the assertion of \cref{lemtec} follows easily.
Here we assume less, and a little extra thought is needed.

\begin{proof}
Let $p\colon TX\to X$ be the projection to the foot point
and $(F_t)$ be the geodesic flow of $X$.
Then $\gamma_s(t)=p(F_t(V(s)))$ and hence
\begin{align*}
	\left.\frac{\partial\gamma_s(t)}{\partial s}\right|_{s=0}
	= p_*F_{t*}(V'(0)).
\end{align*}
Hence we may replace $V$ by a smooth curve with the same derivative at $s=0$
to get that $J(t)$ exists for all $t\in\R$ and that it is equal to the asserted Jacobi field.
\end{proof}

Let $x=c_x(r)\in X\setminus C$ be a regular point.
For $u\in T_xX$, let $J_u$ be the Jacobi field along $c=c_x$ with
\begin{align*}
  J_u(r) = u
  \hspace{2mm}\text{and}\hspace{2mm}
  J_u'(r) = \nabla_u\nabla f.
\end{align*}

\begin{cor}\label{lemtec2}
For all $t>0$, $c_x(t)$ is a regular point; in fact,
\begin{align*}
  \nabla^2f(J_u(t),J_v(t)) = \la J_u(t),J_v'(t)\ra.
\end{align*}
Furthermore, $\pi_{C*}(J_u(t))=J_u(0)$.
\end{cor}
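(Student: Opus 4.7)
The plan is to transfer regularity from $x$ to $c_x(t)$ along the flow of $\nabla f$ and then to read off the two formulas from a Jacobi-field computation. Set $s := t-r$ and consider
\[
\Phi_s(y) := \exp_y(s\,\nabla f|_y) = c_y(f(y)+s),
\]
a $C^{0,1}$ homeomorphism between suitable open subsets of $X\setminus C$ with $C^{0,1}$ inverse $\Phi_{-s}$, satisfying $f\circ\Phi_s = f+s$, $\pi_C\circ\Phi_s = \pi_C$, and $\Phi_s(x) = c_x(t)$.

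For a smooth curve $c(\sigma)$ in $X$ with $c(0)=x$ and $c'(0)=u$, regularity of $x$ makes $V(\sigma):=\nabla f|_{c(\sigma)}$ differentiable at $\sigma=0$ with $V'(0) = \nabla_u\nabla f$. By \cref{lemtec}, the variation $\gamma_\sigma(\tau) := \exp_{c(\sigma)}(\tau V(\sigma))$ produces a Jacobi field $J(\tau) = \partial\gamma_\sigma(\tau)/\partial\sigma|_{\sigma=0}$ along $\gamma_0(\tau) = c_x(r+\tau)$ with $J(0)=u$ and $J'(0) = \nabla_u\nabla f$; after reparameterization $J(\tau) = J_u(r+\tau)$, so $\Phi_s$ is differentiable at $x$ with $d\Phi_s|_x(u) = J_u(t)$.

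To get differentiability at $c_x(t)$, I would invoke the inverse function theorem for Lipschitz homeomorphisms. First check that $d\Phi_s|_x$ is injective: decomposing $u = u^{\mathrm{tan}}+u^{\perp}$ with respect to $c_x'(r)$, the tangential contribution is $J_{u^{\mathrm{tan}}}(t) = \la u,c_x'(r)\ra c_x'(t)$, while for the normal part, convexity of $|J_{u^{\perp}}|^2$ along $c_x$ (Jacobi equation in non-positive curvature) combined with $\partial_t|J_{u^{\perp}}|^2|_{t=r} = 2\nabla^2 f(u^\perp,u^\perp) \ge 0$ (by convexity of $f$) gives $|J_{u^{\perp}}(t)| \ge |u^\perp|$ for $t\ge r$; a similar Jacobi-comparison argument (reflecting the absence of focal points of equidistant hypersurfaces to a convex set in non-positive curvature) covers $t\in(0,r)$. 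Hence $J_u(t)\ne 0$ whenever $u\ne 0$. Since $\Phi_s$ is a Lipschitz homeomorphism with invertible differential at $x$, the standard inverse-function argument in the Lipschitz setting gives $\Phi_{-s}$ differentiable at $c_x(t)$; combined with $\pi_C = \pi_C\circ\Phi_{-s}$ and differentiability of $\pi_C$ at $x=\Phi_{-s}(c_x(t))$, this forces $\pi_C$—and hence $\nabla f$, by \eqref{lcpi}—to be differentiable at $c_x(t)$, so $f$ is twice differentiable there.

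The formulas now follow from Jacobi bookkeeping. Since $\gamma_\sigma'(\tau) = \nabla f|_{\gamma_\sigma(\tau)}$, differentiating in $\sigma$ at $\sigma=0$ and $\tau=s$ gives $\nabla_{J_u(t)}\nabla f = J_u'(t)$, hence $\nabla^2 f(J_u(t),J_v(t)) = \la J_u'(t),J_v(t)\ra$. The Wronskian $\la J_u',J_v\ra - \la J_u,J_v'\ra$ is conserved along $c_x$ (by symmetry of the curvature tensor together with the Jacobi equation) and vanishes at $t=r$ by symmetry of $\nabla^2 f|_x$; this gives both the claimed identity and symmetry of $\nabla^2 f|_{c_x(t)}$, so $c_x(t)$ is regular. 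Finally, $\pi_{C*}(J_u(t)) = J_u(0)$ is obtained by differentiating $\pi_C\circ\Phi_s = \pi_C$ applied to $c(\sigma)$ and a further application of \cref{lemtec} to the variation $\sigma\mapsto\gamma_\sigma(-f(c(\sigma))) = \pi_C(c(\sigma))$. I expect the inverse-function-theorem step to be the main obstacle, since it requires both Jacobi-field non-vanishing throughout $(0,\infty)$ and the careful upgrade of pointwise differentiability through the Lipschitz homeomorphism.
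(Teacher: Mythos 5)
Your route is essentially the one the paper intends: \cref{lemtec2} is stated as a consequence of \cref{lemtec}, applied to the curve $V(\sigma)=\nabla f|_{c(\sigma)}$, which is differentiable at $\sigma=0$ precisely because $x$ is regular; the identities then come from differentiating $\gamma_\sigma'=\nabla f\circ\gamma_\sigma$ and $\gamma_\sigma(-f(c(\sigma)))=\pi_C(c(\sigma))$ in $\sigma$, and the symmetry at $c_x(t)$ from constancy of the Wronskian $\la J_u',J_v\ra-\la J_u,J_v'\ra$, which vanishes at $t=r$ by regularity of $x$. Your explicit appeal to the inverse function theorem for Lipschitz homeomorphisms, to upgrade the directional information to genuine differentiability of $\pi_C$ (hence of $\nabla f$ via \eqref{lcpi}) at $c_x(t)$, is exactly the right tool for a step the paper leaves implicit. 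Two caveats. First, the injectivity of $u\mapsto J_u(t)$ for $t\in(0,r)$ is not ``a similar argument'': convexity of $|J_{u^\perp}|^2$ together with nonnegativity of its derivative at $t=r$ only controls $t\ge r$, and a convex nonnegative function with nonnegative derivative at $r$ may still vanish before $r$. You need the absence of focal points of the equidistants $\{f=\tau\}$ on $(0,r)$, which one can get from the two-sided bound $0\le\nabla^2f|_x\le b\coth(br)$ on $\nabla f|_x^\perp$ (the upper bound by comparing $f$ with the distance to the single point $\pi_C(x)$) and backward Riccati comparison. Second, differentiating $\gamma_\sigma(-f(c(\sigma)))=\pi_C(c(\sigma))$ actually yields $\pi_{C*}(u)=J_u(0)-\la\nabla f|_x,u\ra\,c_x'(0)$, so the final identity $\pi_{C*}(J_u(t))=J_u(0)$ holds for $u\perp\nabla f|_x$ but fails for tangential $u$ (e.g.\ $u=c_x'(r)$ gives $\pi_{C*}(c_x'(t))=0\ne c_x'(0)$); this is consistent with the paper's subsequent use of the corollary only on the normal bundle of $c_x$, as in \eqref{inicon}, but your write-up should record the restriction or the correction term.
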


We also write $J(t)u=J_u(t)$.
Then $J(t)\colon T_xX\to T_{c_x(t)}X$ is an isomorphism, for all $t>0$.
Furthermore, the covariant derivative of $\nabla f$ satisfies
\begin{align}
	S(t) := \nabla^2 f|_{c_x(t)} = J'(t)J(t)^{-1},
\end{align}
by \cref{lemtec2}.
Note that $S$ is a symmetric field of endomorphisms along $c=c_x$
that satisfies the Riccati equation
\begin{align}\label{riccati}
	S' + S^2 + R_c = 0,
\end{align}
where $R_cu=R(u,c')c'$.
Clearly, $c'=\nabla f$ belongs to the kernel of $S$.
Therefore we discuss $S$ only on the normal bundle of $c$,
identifying the various $c'(t)^\perp$ with $c'(0)^\perp$ via parallel translation along $c$.
By \cite[p.\,212]{EschenburgHeintze90}, 
$S$ has the asymptotic behaviour
\begin{align}\label{PQ}
	S(t) = \frac{1}{t} P+ Q(t) \hspace{2mm}\text{as $t\to0$},
\end{align}
where $P$ is an orthogonal projection on $c'(0)^\perp$
and $Q$ extends continuously to $t=0$, such that $\im P\subseteq\ker Q(0)$.
We call the pair $(P,Q(0))$ the \emph{initial condition} of $S$ since it determines $S$ uniquely.
In terms of $S$, the space of Jacobi fields along $c$ which we consider
is given by the initial conditions
\begin{align}\label{inicon}
	J_v(0) = (1-P)v,\;
	J_v'(0) = Pv + Qv,
\end{align}
where $v\in c'(0)^\perp$.
By the convexity of $C$, we have $Q(0)\ge0$.

For any solution $S$ of \eqref{riccati} on the normal bundle of $c$,
let $S_a$ and $S_b$ be the fields of endomorphisms  on the normal bundle of $c$
which correspond to solutions of \eqref{riccati}
for constant sectional curvature $-a^2$ and $-b^2$, respectively,
with the same initial condition, $(P,Q(0))$.
More precisely,
let $(v_i)$ be an orthonormal basis of $\dot c(0)^\perp$ such that $v_2,\dots,v_k$ span $\ker P$
and are eigenvectors of $Q(0)$ with corresponding eigenvalue $\alpha_2,\dots,\alpha_k$
and $v_{k+1},\dots,v_m$ span $\im P$.
Then
\begin{align*}
	S_a(t)V_i(t) =
	\begin{cases}
	a\frac{\sinh(at)+\alpha_i\cosh(at)/a}{\cosh(at)+\alpha_i\sinh(at)/a}V_i(t)
	&\text{for $i\le k$},\\
	a\frac{\cosh(at)}{\sinh(at)}V_i(t)
	&\text{for $i> k$},	
	\end{cases}
\end{align*}
where the $V_i$ are parallel along $c$ with $V_i(0)=v_i$,
and similarly for $S_b$, substituting $b$ for $a$.
If $Q(0)\ge0$, as in the case $S=\nabla^2f|_{\dot c^\perp}$ under discussion,
we have $\alpha_i\ge0$, and then $S_a$ and $S_b$ are defined for all $t>0$.
Now \cite[Theorem, page 210]{EschenburgHeintze90} yields the following estimates.

\begin{lem}\label{curvest}
If $Q(0)\ge0$, then we have, for all $t>0$,
\begin{align*}
	S_a(t) \le S(t)|_{\nabla f(x)^\perp} \le S_b(t). 
\end{align*}
In particular, for any Jacobi field $J=J_u$ as above and perpendicular to $c$,
\begin{align*}
	a\tanh(at) \le (\ln|J|)'(t) \le b\coth(bt).
\end{align*}
\end{lem}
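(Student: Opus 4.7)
The plan is to deduce both assertions from the Riccati equation \eqref{riccati} via the comparison theorem of Eschenburg--Heintze \cite[Theorem, page 210]{EschenburgHeintze90}. On the normal bundle of $c$, the symmetric field $S$ satisfies \eqref{riccati} with curvature endomorphism $R_c$ subject to $-b^2 I \le R_c \le -a^2 I$, by the pinching $-b^2 \le K_X \le -a^2$, while the model fields $S_a$ and $S_b$ satisfy the same Riccati equation with $R_c$ replaced by $-a^2 I$ and $-b^2 I$, respectively, and with the same initial data $(P, Q(0))$. Since $Q(0) \ge 0$, the explicit formulas displayed above stay regular for all $t > 0$, so both model solutions are defined and smooth on $(0, \infty)$. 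Applying the Eschenburg--Heintze comparison once to each of the pairs $(S_a, S)$ and $(S, S_b)$, using the identical initial conditions and the curvature inequality above, I obtain $S_a(t) \le S(t)|_{\nabla f(x)^\perp} \le S_b(t)$ for all $t > 0$, which settles the first claim.

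For the bound on $(\ln|J|)'$, I would first note that the Jacobi field matrix $J(t)\colon T_xX \to T_{c(t)}X$ from \cref{lemtec2} satisfies $J'(t) = S(t) J(t)$ directly from the definition $S = J' J^{-1}$. If $J = J_u$ is perpendicular to $c$, then both $J(t)$ and $J'(t)$ remain in $c'(t)^\perp$ for all $t > 0$, so
\begin{align*}
(\ln|J|)'(t) = \frac{\langle J'(t), J(t) \rangle}{|J(t)|^2} = \frac{\langle S(t) J(t), J(t) \rangle}{|J(t)|^2}.
\end{align*}
To conclude, it suffices to upgrade the operator sandwich $S_a \le S \le S_b$ on $c'(t)^\perp$ to the scalar endomorphism inequalities $a\tanh(at)\, I \le S_a(t)$ and $S_b(t) \le b\coth(bt)\, I$, since then every such Rayleigh quotient is bounded below by $a\tanh(at)$ and above by $b\coth(bt)$.

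These scalar bounds I would read off from the diagonalisation of $S_a$ and $S_b$ in the parallel frame $(V_i)$. For indices $i > k$ the eigenvalues are $a\coth(at)$ resp.\ $b\coth(bt)$, which already lie in the required intervals. For $i \le k$ the eigenvalues are $c\, h_c(\alpha_i)$ with $c = a$ or $c = b$ and
\[
h_c(\alpha) = \frac{\sinh(ct) + \alpha \cosh(ct)/c}{\cosh(ct) + \alpha \sinh(ct)/c},
\]
which satisfies $h_c(0) = \tanh(ct)$, $\lim_{\alpha \to \infty} h_c(\alpha) = \coth(ct)$, and a one-line differentiation yields $h_c'(\alpha) = c^{-1} (\cosh(ct) + \alpha \sinh(ct)/c)^{-2} > 0$. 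Hence $h_c$ is strictly increasing on $[0, \infty)$, every eigenvalue of $S_a(t)$ lies in $[a\tanh(at), a\coth(at)]$, and every eigenvalue of $S_b(t)$ in $[b\tanh(bt), b\coth(bt)]$, which is exactly what is needed. The only point demanding real care is verifying that the hypothesis $Q(0) \ge 0$ is genuinely used both to keep $S_a$ and $S_b$ globally defined on $(0, \infty)$ and to license the Eschenburg--Heintze comparison on that entire interval; once this bookkeeping is in place, the eigenvalue check is routine.
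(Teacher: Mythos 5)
Your argument is correct and follows the same route the paper takes: the paper simply invokes the Eschenburg--Heintze Riccati comparison theorem to conclude both inequalities, and you reproduce exactly that, applying it to the pairs $(S_a,S)$ and $(S,S_b)$ with the shared initial data $(P,Q(0))$ and the curvature pinching. The extra work you do — diagonalizing $S_a$, $S_b$ in the parallel frame, checking $h_c(0)=\tanh(ct)$, $h_c(\alpha)\to\coth(ct)$, and $h_c'>0$, and then passing to the Rayleigh quotient $(\ln|J|)'=\langle SJ,J\rangle/|J|^2$ — is precisely the (correct) bookkeeping the paper leaves implicit in the phrase ``yields the following estimates.''
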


\begin{cor}\label{curvest2}
If $(P,Q(0))=(0,Q(0))$ with $0\le\alpha\le Q(0)\le\beta$, then
\begin{align*}
	a\frac{\sinh(at)+\alpha\cosh(at)/a}{\cosh(at)+\alpha\sinh(at)/a}
	\le S(t)
	\le b\frac{\sinh(bt)+\beta\cosh(bt)/b}{\cosh(bt)+\beta\sinh(bt)/b}.
\end{align*}
\end{cor}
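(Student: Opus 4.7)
The plan is to derive \cref{curvest2} as a direct corollary of \cref{curvest}, by passing from operator inequalities to scalar (multiple-of-identity) inequalities via the monotonicity of a certain single-variable function. The key auxiliary observation is that, for fixed $t>0$ and fixed $a>0$, the scalar function
\begin{align*}
	h_a(\alpha,t) := a\frac{\sinh(at)+\alpha\cosh(at)/a}{\cosh(at)+\alpha\sinh(at)/a}
\end{align*}
is strictly increasing in $\alpha\ge 0$. A one-line calculation, using $\cosh^2(at)-\sinh^2(at)=1$, gives
\begin{align*}
	\frac{\partial h_a}{\partial \alpha}(\alpha,t)
	= \frac{1}{(\cosh(at)+\alpha\sinh(at)/a)^2}>0.
\end{align*}
The same formula with $b$ in place of $a$ yields the analogous monotonicity for $h_b$.

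Next I would unpack the conclusion of \cref{curvest} in the special case $P=0$ assumed here. Then $\ker P=\dot c(0)^\perp$ has full dimension, so the orthonormal basis $(v_i)$ from the discussion preceding \cref{curvest} consists entirely of eigenvectors of $Q(0)$ (no vectors of the second kind appear), with eigenvalues $\alpha_i\ge 0$. In the parallel frame $(V_i)$, $S_a$ is therefore diagonal with eigenvalues $h_a(\alpha_i,t)$, and $S_b$ is diagonal with eigenvalues $h_b(\alpha_i,t)$.

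Finally I would convert the operator inequality $\alpha\le Q(0)\le\beta$ into the scalar bounds by applying the above monotonicity eigenvalue-by-eigenvalue: from $\alpha\le\alpha_i\le\beta$ I get $h_a(\alpha,t)\le h_a(\alpha_i,t)$ and $h_b(\alpha_i,t)\le h_b(\beta,t)$ for every $i$. Hence, as symmetric operators on the parallel transport of $\dot c(0)^\perp$,
\begin{align*}
	h_a(\alpha,t)\cdot I \le S_a(t) \quad\text{and}\quad S_b(t) \le h_b(\beta,t)\cdot I.
\end{align*}
Combining these with $S_a(t)\le S(t)\le S_b(t)$, supplied by \cref{curvest}, yields the two asserted inequalities.

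There is no real obstacle: the whole argument reduces to the monotonicity of $h_a(\cdot,t)$ and to recognizing that the case $P=0$ eliminates the second branch in the formula for $S_a$ in \cref{curvest}. The only slight subtlety is to remember that the inequality $\alpha\le Q(0)\le\beta$ is an operator inequality that, because $Q(0)$ is symmetric with $Q(0)\ge 0$, passes to each eigenvalue separately.
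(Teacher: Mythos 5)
Your proof is correct and is essentially the argument the paper leaves implicit: since \cref{curvest2} is stated as an immediate corollary of \cref{curvest} with no separate proof, the intended reasoning is exactly the one you spell out — in the case $P=0$ the second branch in the explicit formula for $S_a$, $S_b$ drops out, and the scalar bounds follow from the monotonicity in $\alpha$ of $h_a(\alpha,t)$ and $h_b(\alpha,t)$, applied eigenvalue-by-eigenvalue to $Q(0)$. Your derivative computation $\partial_\alpha h_a = (\cosh(at)+\alpha\sinh(at)/a)^{-2}$ is correct and cleanly justifies the monotonicity step.
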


The initial condition $(P,Q(0))=(0,0)$ yields the following estimates.

\begin{cor}[Rauch II]\label{rauch2}
If $J$ is a Jacobi field along $c$ with $J'(0)=0$, then we have, for all $t>0$,
\begin{align*}
	|J'(t)| \le b\tanh(bt)|J(t)|
	\quad\text{and}\quad
	|J(t)| \le \cosh(bt) |J(0)|.
\end{align*}
\end{cor}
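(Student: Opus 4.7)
The plan is to reduce to a Jacobi field perpendicular to $c$ and then apply \cref{curvest2} with the initial condition $(P,Q(0))=(0,0)$. First I would decompose $J=J^T+J^\perp$ along $c$. Since $\langle J,c'\rangle$ is affine in $t$ with slope $\langle J'(0),c'(0)\rangle=0$, we have $J^T(t)=\alpha\, c'(t)$ parallel, where $\alpha=\langle J(0),c'(0)\rangle$, and hence $J^\perp$ is a Jacobi field perpendicular to $c$ with $(J^\perp)'(0)=0$. Because $|J'|=|(J^\perp)'|$, $|J|\ge|J^\perp|$, and $\cosh(bt)\ge 1$, both inequalities for $J$ will follow from the corresponding inequalities for $J^\perp$.

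I may then assume that $J$ is perpendicular to $c$ with $J'(0)=0$; the case $J(0)=0$ forces $J\equiv 0$ (no conjugate points, since $K\le -a^2<0$), so I further assume $J(0)\ne 0$. The idea is to pass to the operator-valued Jacobi field $\mathcal J(t)\colon c'(0)^\perp\to c'(t)^\perp$ with $\mathcal J(0)=\id$ and $\mathcal J'(0)=0$; in the notation of \eqref{inicon} this is exactly the initial condition $(P,Q(0))=(0,0)$. Then $S(t)=\mathcal J'(t)\mathcal J(t)^{-1}$ solves the Riccati equation \eqref{riccati} with this initial condition, and \cref{curvest2} with $\alpha=\beta=0$ yields
\begin{align*}
	S(t)\le b\,\frac{\sinh(bt)}{\cosh(bt)}=b\tanh(bt)\,\id
\end{align*}
as symmetric endomorphisms of $c'(t)^\perp$.

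From here the two inequalities follow quickly. Writing $J(t)=\mathcal J(t)J(0)$, one has $J'(t)=S(t)J(t)$, and the operator bound $\|S(t)\|\le b\tanh(bt)$ gives the first inequality immediately. For the second I would observe that
\begin{align*}
	(\ln|J|)'(t)=\frac{\langle J,J'\rangle}{|J|^2}=\frac{\langle J,SJ\rangle}{|J|^2}\le b\tanh(bt),
\end{align*}
and integrate over $[0,t]$ using $\int_0^t b\tanh(bs)\,ds=\ln\cosh(bt)$ to conclude $|J(t)|\le\cosh(bt)|J(0)|$. There is essentially no obstacle here: the result is a direct corollary of \cref{curvest2}, and the only small care needed is the tangential–normal splitting and the dispensing with the trivial case $J\equiv 0$.
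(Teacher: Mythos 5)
Your proof is correct and follows the same route the paper intends: the paper gives no explicit proof, merely noting that the initial condition $(P,Q(0))=(0,0)$ yields the estimates, which is exactly what you exploit via \cref{curvest2} and the relation $J'=SJ$. The tangential/normal splitting you add (using that $\langle J,c'\rangle$ is affine with zero slope, so $J^T$ is parallel, $|J'|=|(J^\perp)'|$, and $\cosh(bt)\ge1$ handles the tangential contribution) is a sound way to reduce to the perpendicular case, and this reduction is implicit in the paper since both \cref{curvest} and the displayed formula right before \cref{curvest} treat $S$ only on the normal bundle of $c$.
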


\begin{rem}\label{equiv}
If $\Gamma$ is a group of isometries of $X$ which leaves $C$ invariant,
then all the above constructions and assertions are $\Gamma$-equivariant
and have their analogues in $\Gamma\backslash X$.
\end{rem}

%%%%%%%%%%%%%%%%%%%%%%%%%%%%%%%%%%%%%%%%%%%%%%
\subsection{Amenable coverings}
\label{subame}
%%%%%%%%%%%%%%%%%%%%%%%%%%%%%%%%%%%%%%%%%%%%%%
In the proof of our main results we will use \cite[Propositions 4.12 and 4.13]{Polymerakis20a},
which we summarize as follows:

\begin{prop}\label{pp}
Let $p\colon M_2\to M_1$ be an infinite and amenable Riemannian covering of Riemannian manifolds.
Let $A_1$ be a formally self-adjoint differential operator on a vector bundle $E_1$ over $M_1$
and $A_2$ be the lift of $D_1$ to the lift $E_2$ of $E_1$.
Then, for any $u_1\in C^\infty_c(M_1,E_1)$, $\lambda\in\R$, and $\ve>0$,
there exists a $u_2\in C^\infty_c(M_2,E_2)$ with $\|u_2\|_2 = \|u_1\|_2$ such that
\begin{align*}
	\supp u_2 &\subseteq p^{-1}(\supp u_1), \\
	\|(A_2-\lambda)u_2\|_2 &\le \|(A_1-\lambda)u_1\|_2+\ve \\
	\la A_2u_2,u_2\ra_2 &\le \la A_1u_1,u_1\ra_2+\ve.
\end{align*}
\end{prop}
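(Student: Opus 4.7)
The plan is a Brooks-style construction that uses amenability of the covering to produce cutoffs with vanishing relative derivatives. First, lift $u_1$ to a smooth section $\tilde u_1 := u_1 \circ p$ of $E_2$, supported in $p^{-1}(\supp u_1)$. Since $A_2$ is the lift of $A_1$, one has $A_2 \tilde u_1 = (A_1 u_1) \circ p$ pointwise, and $|\tilde u_1|^2 = |u_1|^2 \circ p$. Although $\tilde u_1 \notin L^2(M_2, E_2)$, pairings against compactly supported cutoffs convert via the fiber-sum formula $\int_{M_2} (F \circ p) \psi \, d\mathrm{vol}_2 = \int_{M_1} F \cdot (p_* \psi) \, d\mathrm{vol}_1$ for $F \in C_c(M_1)$ and $\psi \in C_c(M_2)$.

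Second, invoke amenability through a F\o{}lner criterion for the covering: for every $\delta > 0$ and every compact $K \subset M_1$ (take $K = \supp u_1$) there exists a compactly supported nonnegative $\chi \in C_c^\infty(M_2)$ with the properties that $p_*(\chi^2)$ is essentially constant on $K$, say bounded below by $N$ and above by $(1+\delta) N$ for a large integer $N$, while the pushforwards $p_*(|\nabla^j \chi|^2)$ for $j = 1, \ldots, \mathrm{ord}(A_1)$ satisfy $\|p_*(|\nabla^j \chi|^2)\|_{L^\infty(K)} \le \delta N$. This is the heart of the proof: amenability permits cutoffs whose derivatives are arbitrarily small in $L^2$-average per sheet compared to the cutoff itself. (In the non-normal case one mollifies the characteristic functions of F\o{}lner-type sets at a sufficiently large scale; smoothness on all necessary orders is obtained by iterating the mollification.)

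Third, set $u_2 := c \, \chi \, \tilde u_1$ with $c := \|u_1\|_2 / \|\chi \tilde u_1\|_2$, so that $\|u_2\|_2 = \|u_1\|_2$ and $\supp u_2 \subseteq p^{-1}(\supp u_1)$. The fiber-sum identity together with (i) gives $\|\chi \tilde u_1\|_2^2 = \int_{M_1} |u_1|^2 p_*(\chi^2) \in [N, (1+\delta)N]\,\|u_1\|_2^2$. Now expand
\begin{align*}
(A_2 - \lambda) u_2 = c\, \chi \cdot ((A_1 - \lambda)u_1) \circ p + c\, [A_2, \chi]\tilde u_1.
\end{align*}
The $L^2$-norm squared of the first term equals $c^2 \int_{M_1} |(A_1 - \lambda)u_1|^2 \, p_*(\chi^2) \in [c^2 N,\, c^2(1+\delta)N] \cdot \|(A_1-\lambda)u_1\|_2^2$, which lies within a factor $1+\delta$ of $\|(A_1-\lambda)u_1\|_2^2$. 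The commutator $[A_2, \chi]$ is a differential operator of order at most $\mathrm{ord}(A_1) - 1$ whose coefficients involve at most $\mathrm{ord}(A_1)$ derivatives of $\chi$ together with universal smooth coefficients pulled back from $M_1$; applying the fiber-sum formula once more, $c^2\|[A_2,\chi]\tilde u_1\|_2^2$ is bounded by a constant depending only on $u_1$ (through its $C^{\mathrm{ord}(A_1)-1}$-norm, the symbol of $A_1$ on $K$, and $\|u_1\|_2$) times $\delta$. Choosing $\delta$ small enough yields the claimed estimate on $\|(A_2-\lambda)u_2\|_2$, and an analogous but simpler computation using integration by parts and only the first-derivative control on $\chi$ yields the quadratic form estimate $\langle A_2 u_2, u_2\rangle_2 \le \langle A_1 u_1, u_1\rangle_2 + \ve$.

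The main obstacle is arranging the F\o{}lner cutoffs so that (i) and (ii) hold \emph{simultaneously} for derivatives up to the order of $A_1$, rather than just for gradients as in the classical Brooks argument for the Laplacian. This is achieved by mollifying indicator functions of F\o{}lner sets at a scale comparable to the injectivity radius on $K$ and then iterating, so that each $|\nabla^j \chi|$ is controlled by the $j$-th difference quotient of the mollified indicator, which becomes negligible relative to $\chi$ itself by the F\o{}lner condition once the scale is large enough. Combining everything, and letting $\delta \to 0$, delivers the required $u_2$.
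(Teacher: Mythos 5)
The paper states \cref{pp} without proof, citing \cite[Propositions 4.12 and 4.13]{Polymerakis20a} and summarizing only that ``the proof consists of a sophisticated choice of cutoff functions to turn the lift of $u_1$ to $M_2$ into a section with the asserted properties''; your Brooks-style construction---lift $u_1$, multiply by a normalized F\o{}lner cutoff $\chi$ with controlled $p_*(\chi^2)$ and small $p_*(|\nabla^j\chi|^2)$, and expand $(A_2-\lambda)u_2$ into a fiber term plus a commutator term---is exactly this approach and your bookkeeping of the resulting estimates is sound. The one place you are vaguer than a complete argument would need to be is the construction of the smooth $\chi$ itself in the general (non-normal) amenable covering case: producing a genuinely smooth $C^\infty_c$ cutoff whose sheet-sum $p_*(\chi^2)$ is pinched between $N$ and $(1+\delta)N$ while simultaneously $p_*(|\nabla^j\chi|^2)\le\delta N$ in $L^\infty$ over $\supp u_1$ for all $j$ up to the order of $A$ requires care (e.g.\ mollifying on the Schreier coset space and transporting to $M_2$ via a partition of unity subordinate to a good cover of $M_1$), and ``mollify at a large scale and iterate'' glosses over why the pinching and the derivative bounds can be achieved at once; this is presumably the sophistication the cited reference supplies.
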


The proof consists of a sophisticated choice of cutoff functions
to turn the lift of $u_1$ to $M_2$ into a section with the asserted properties.
Amenability makes such choices possible.

%%%%%%%%%%%%%%%%%%%%%%%%%%%%%%%%%%%%%%%%%%%%%%
\subsection{Homogeneous vector bundles}
\label{subcasi}
%%%%%%%%%%%%%%%%%%%%%%%%%%%%%%%%%%%%%%%%%%%%%%
Let $X$ be a symmetric space of non-compact type and write $X=G/K$,
where $(G,K)$ is a Riemannian symmetric pair,
$G$ a semi-simple Lie group, and $K$ the stabilizer in $G$ of a point $x_0\in X$.
Denote by $B$ the Killing form of $G$, and identify $T_{x_0}X$ as usual
with the $B$-orthogonal complement $\mathfrak p$
of the Lie algebra $\mathfrak k$ of $K$ in the Lie algebra $\mathfrak g$ of $G$.
The restriction of $B$ to $\mathfrak p$
induces a $G$-invariant Riemannian metric on $X$.

Let $\pi$ be an orthogonal representation of $K$ on a finite-dimensional Euclidean space $E_0$.
Denote by $E_\pi$ the Riemannian vector bundle over $X$ associated to $\pi$,
\begin{align*}
	E_\pi = \{ [g,u] \mid g\in G, u\in E_0 \},
\end{align*}
where $[gk,u]=[g,\pi(k)u]$ for all $g\in G$, $k\in K$, and $u\in E_0$.
Sections of $E_\pi$ are in one-to-one correspondence with maps $u\colon G\to E_0$
such that $u(gk)=\pi(k)u(g)$ for all $g\in G$ and $k\in K$.
Clearly, $g[h,u]=[gh,u]$ is a left-action of $G$ on $E_\pi$.
We call $E_\pi$ the \emph{homogeneous vector bundle} over $X$ associated to $\pi$.
Since $\pi$ is orthogonal, the inner product on $E_0$ induces a $G$-invariant Riemannian metric on $E_\pi$.
Conversely, if $E$ is a Riemannian vector bundle over $X$ with an associated orthogonal action of $G$
and $E_0$ is the fiber of $E$ over $x_0$,
then the isotropy representation $\pi$ of $K$ on $E_0$ yields an isometric isomorphism $E\cong E_\pi$.

The Levi-Civita connection of $X$ induces a $G$-invariant metric connection $\nabla$ on $E_\pi$.
The covariant derivative of a section $[g\exp(tX),u(t)]$ along the geodesic $g\exp(tX)x_0$
through $gx_0$, where $t\in\R$ and $X\in\mathfrak p$, is given by 
\begin{align}\label{nabla}
	[ge^{tX},u(t)]'(0) = [g,u'(0)].
\end{align}
We see that the section is parallel along the geodesic if and only if $u$ is constant.

\begin{lem}\label{nabla2}
Let $Z\in\mathfrak g$ and write $Z=X+Y$ with $X\in\mathfrak k$ and $Y\in\mathfrak p$.
Then the covariant derivative of a section $[g\exp(tZ),u(t)]$ along the curve $g\exp(tZ)x_0$
at $t=0$ is given by 
\begin{align*}
	[ge^{tZ},u(t)]'(0) = [g,u'(0)+\pi_*X(u(0))].
\end{align*}
\end{lem}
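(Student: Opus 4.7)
My plan is to construct a local frame of $E_\pi$ that is parallel at the base point, express $s(t)=[g\exp(tZ),u(t)]$ in this frame, and read off the covariant derivative from the first-order behavior of the coefficients. By $G$-invariance of $\nabla$, I may reduce to the case $g=e$; the general case follows by translating with the isometry of $E_\pi$ induced by $g$.

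First, I fix a basis $e_1,\dots,e_n$ of $E_0$ and define the frame
\begin{align*}
E_i(\exp(Y)x_0):=[\exp(Y),e_i],\qquad Y\in\mathfrak p,
\end{align*}
on $X$ (using that $Y\mapsto\exp(Y)x_0$ is a diffeomorphism $\mathfrak p\to X$). Applying \eqref{nabla} to the constant sections $u\equiv e_i$ along every geodesic through $x_0$ shows that each $E_i$ is parallel along all such geodesics, and hence $\nabla E_i|_{x_0}=0$. Consequently, for any smooth section $\sum_i c_i(t)E_i(\gamma(t))$ along a curve $\gamma$ with $\gamma(0)=x_0$, the covariant derivative at $t=0$ reduces to $\sum_i c_i'(0)E_i(x_0)$.

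The key step is to write $s(t)$ in this frame. Using the global Cartan diffeomorphism $\mathfrak p\times K\to G$, $(Y',k)\mapsto\exp(Y')k$, I factor
\begin{align*}
\exp(tZ)=\exp(\phi(t))\,k(t),
\end{align*}
with smooth $\phi(t)\in\mathfrak p$, $k(t)\in K$, and $\phi(0)=0$, $k(0)=e$. Differentiating at $t=0$ and matching the $\mathfrak p$- and $\mathfrak k$-components of $Z=Y+X$ gives $\phi'(0)=Y$ and $\dot k(0)=X$. The equivariance $[g'k,v]=[g',\pi(k)v]$ then rewrites
\begin{align*}
s(t)=[\exp(\phi(t)),\pi(k(t))u(t)],
\end{align*}
so the coefficients of $s(t)$ in the frame $(E_i(\gamma(t)))$ are precisely the coordinates of $\pi(k(t))u(t)\in E_0$. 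Differentiating these coefficients at $t=0$ via the product rule yields $u'(0)+\pi_*(\dot k(0))u(0)=u'(0)+\pi_*(X)u(0)$, which is the asserted formula.

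The step I expect to be the main obstacle is the initial one, since $\gamma(t)=\exp(tZ)x_0$ fails to be a geodesic as soon as $X\ne 0$ and \eqref{nabla} is not directly applicable to $s$. The Cartan factorization circumvents this by transferring the non-geodesic part of the motion into the $K$-valued factor $k(t)$; the parallel frame $(E_i)$ then ensures that the $O(t^2)$ ambiguity hidden in $\phi(t)=tY+O(t^2)$ is invisible to the covariant derivative at $t=0$, while $\dot k(0)=X$ surfaces as the extra term $\pi_*(X)u(0)$.
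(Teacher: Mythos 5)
Your proof is correct. The core idea is the same as the paper's: use the equivariance $[g'k,v]=[g',\pi(k)v]$ to move the $\mathfrak k$-component of $Z$ into the fiber and then apply \eqref{nabla}. The implementations differ, though. The paper replaces $\exp(tZ)$ by the first-order-tangent curve $\exp(tY)\exp(tX)$ in $G$, observes that the resulting sections of $E_\pi$ have the same value and derivative at $t=0$ (so the covariant derivatives coincide), pulls $\exp(tX)\in K$ into the fiber, and applies \eqref{nabla} along the geodesic $g\exp(tY)x_0$. You instead use the exact Cartan factorization $\exp(tZ)=\exp(\phi(t))k(t)$ together with a frame $(E_i)$ of $E_\pi$ constructed to be parallel at $x_0$ (itself a consequence of \eqref{nabla}), and read the covariant derivative off the coefficient curve $\pi(k(t))u(t)$ via the Leibniz rule. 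The paper's route is a few lines shorter; yours is more self-contained and makes explicit the first-order bookkeeping — in particular why the $O(t^2)$ discrepancy between $\phi(t)$ and $tY$, equivalently between $\exp(tZ)$ and $\exp(tY)\exp(tX)$, does not affect the covariant derivative at $t=0$ — which the paper leaves implicit in its ``therefore'' step.
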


\begin{proof}
The curves $g\exp(tZ)x_0$ and $g\exp(tY)\exp(tX)x_0$ through $gx_0$
have the same derivative at $t=0$ and therefore
\begin{align*}
	[ge^{tZ},u(t)]'(0)
	&= [ge^{tY}e^{tX},u(t)]'(0) \\
	&= [ge^{tY},\pi(e^{tX})u(t)]'(0)
	= [g,u'(0) + \pi_*X(u(0))],
\end{align*}
where we use \eqref{nabla} in the last step.
\end{proof}

The \emph{Casimir operator} on $E_\pi$ is given by
\begin{align}\label{deltapi}
	\Delta_\pi = - \sum b^{ij}\pi_*(Z_i)\pi_*(Z_j),
\end{align}
where $(Z_i)$ is a basis of $\mathfrak g$ with respect to $B$
and $(b^{ij})$ is the inverse matrix of the matrix with entries $b_{ij}=B(Z_i,Z_j)$.
The Casimir operator acts on sections $u$ of $E_\pi$ by
\begin{align}\label{deltapi2}
	\Delta_\pi u(gx_0)
	= \left. - \sum b^{ij} \frac{\partial^2}{\partial s\partial t}u(ge^{sZ_i}e^{tZ_j}x_0)\right|_{s=t=0}.
\end{align}
The Casimir operator is a $G$-invariant Schr\"odinger operator with respect to the given connection of $E_\pi$,
where $\nabla^*\nabla$ and potential $V$ correspond to the parts
\begin{align}\label{deltapi4}
	\nabla^*\nabla = - \sum \pi_*(X_i)\pi_*(X_i)
	\hspace{3mm}\text{and}\hspace{3mm}
	V = \sum \pi_*(Y_j)\pi_*(Y_j)
\end{align}
of $\Delta_\pi$.
Here $(X_i)$ and $(Y_j)$ are $B$-orthonormal bases of $\mathfrak p$ and $\mathfrak k$.

Let $\Gamma\subseteq G$ be a discrete subgroup
and $O=\Gamma\backslash X$ the associated orbifold.
Let $\rho$ be an orthogonal representation of $\Gamma$ on $E_0$ such that
\begin{align}\label{rhopi}
	\pi(k)\rho(g) = \rho(g)\pi(k)
\end{align}
for all $k\in K$ and $g\in\Gamma$.
The standard example here is that $\rho$ is an orthogonal representation of $\Gamma$
on a Euclidean space $F_0$, where $E_0$ is replaced by $E_0\otimes F_0$
and $\pi$ and $\rho$ are extended to $E_0\otimes F_0$
by $\pi(g)(u\otimes v)=\pi(g)u\otimes v$ and $\rho(g)(u\otimes v)=u\otimes\rho(g)v$
for all $g\in G$ and $g\in\Gamma$, respectively. 
Since $\Gamma$ acts on $E_\pi$ from the left via
\begin{align}\label{rhopi2}
	g[h,u] = [gh,\rho(g)u],
\end{align}
preserving Riemannian metric, connection, and Casimir operator,
we can push down $E_\pi$ to the Riemannian vector bundle $E_{\pi,\rho}=\Gamma\backslash E_\pi$
over $O$ with metric connection $\nabla$ and \emph{twisted Casimir operator} $\Delta_{\pi,\rho}$.
Since $\Delta_{\pi,\rho}$ lifts to $\Delta_\pi$ and $\Delta_\pi$ is $G$-invariant,
$\Delta_{\pi,\rho}$ is a uniform differential operator as defined in the introduction.

In examples, the following two computations,
extracted from the proof of \cite[Lemma 5.2]{MoroianuSemmelmann10}, are useful.
Recall that, under the natural identification of $T_{x_0}X$ with $\mathfrak p$,
the isotropy representation of $\mathfrak k$ is given by the adjoint representation on $\mathfrak p$.
Let $(X_i)$ be an orthonormal basis of $\mathfrak p$ with respect to $B$.
Then 
\begin{align}\label{kp}
\begin{split}
	2[Y,X]
	&= \sum \{ B(X_i,X)[Y,X_i] - B([Y,X_i],X)X_i \} \\
	&= \sum (X_i\wedge[Y,X_i])(X),
\end{split}
\end{align}
for any $Y\in\mathfrak k$ and $X\in\mathfrak p$. 
Assume now that $\pi$ is equal to the composition of the adjoint representation of $K$ on $\mathfrak p$
with an orthogonal representation $\alpha$ of $\SO(\mathfrak p)$.
Let $(Y_k)$ be an orthonormal basis of $\mathfrak k$ with respect to $B$.
Then
\begin{align}\label{alpha}
\begin{split}
	V(x_0)
	&= \sum_k \pi_*(Y_k)\pi_*(Y_k) \\
	&= \frac12 \sum_{i,k} \alpha_*(X_i\wedge[Y_k,X_i])\pi_*(Y_k) \\
	&= \frac12 \sum_{i,j,k} B([Y_k,X_i],X_j)\alpha_*(X_i\wedge X_j)\pi_*(Y_k) \\
	&= \frac12 \sum_{i,j,k} B(Y_k,[X_i,X_j])\alpha_*(X_i\wedge X_j)\pi_*(Y_k) \\
	&= - \frac12 \sum_{i,j} \alpha_*(X_i\wedge X_j)\pi_*([X_i,X_j]) \\
	&= \frac12 \sum_{i,j} \alpha_*(X_i\wedge X_j)\alpha_*(R(X_i,X_j)).
\end{split}
\end{align}
We see that the potential is a curvature term.
An example is the representation $\alpha$ of $\SO(\mathfrak p)$
on the space of alternating $k$-forms on $\mathfrak p$.
Then the Casimir operator is equal to the Hodge-Laplacian, $\Delta_\pi=(d+d^*)^2=\Delta_k$.
As a consequence in the case of complex hyperbolic spaces,
if $\pi$ is the representation of $K$ on the space of forms on $\mathfrak p\otimes\C$ of type $(p,q)$,
then the Casimir operator is equal to the Dolbeault-Laplacian,
$\Delta_\pi=2(\bar\partial+\bar\partial^*)^2=\Delta_{p,q}$.

Let now $\pi$ be as above and $\sigma_0$ be a linear map on $\mathfrak p$
with values in the space of skew-symmetric endomorphisms of $E_0$.
Extend $\sigma_0$ to a one-form $\sigma$ on $X$
with values in the space of skew-symmetric endomorphisms of $E_\pi$ by
\begin{align}\label{asigm}
	\sigma_g(g_*X)[g,u] = [g,\sigma_0(X)u].
\end{align}
For $\sigma$ to be well-defined, we need that
\begin{align}\label{asigm2}
	\sigma_0(X)\pi(k) = \pi(k)\sigma_0(X)
\end{align}
for all $k\in K$ and $X\in\mathfrak p$.
Then
\begin{align}\label{asigm4}
	A_\sigma = \sum \sigma(X_i)\nabla_{X_i},
\end{align}
where $(X_i)$ is a local orthonormal frame of $X$,
is a $G$-invariant differential operator on $E_\pi$ of order one with principal symbol $\sigma$,
which is symmetric on $C^\infty_c(X,E_\pi)$.
Conversely, up to a $G$-invariant symmetric potential,
any $G$-invariant differential operator on $E_\pi$ of order one,
which is symmetric on $C^\infty_c(X,E_\pi)$, is of this type.

Given $\Gamma$ and $\rho$ as above, $\rho$ induces a twisted version $A_{\sigma,\rho}$
of $A_\sigma$ on $E_{\pi,\rho}$ over $O$ if and only if
\begin{align}
		\sigma_0(X)\rho(g) = \rho(g)\sigma_0(X)
\end{align}
for all $g\in\Gamma$ and $X\in\mathfrak p$.
Note that $A_\sigma$ and $A_{\sigma,\rho}$ are elliptic, and then also essentially self-adjoint,
if $\sigma_0$ satisfies the ellipticity condition that $\sigma_0(X)$ is invertible for $X\ne0$.

Examples of elliptic differential operators $A_\sigma$ as above are
the Hodge-Dirac operator $d+d^*$,
the Dolbeault-Dirac operator $\sqrt2(\bar\partial+\bar\partial^*)$,
and, if the isotropy representation of $K$ on $\mathfrak p=T_{x_0}X$ lifts to $\Spin(\mathfrak p)$,
the Dirac operator on the spinor bundle.

%%%%%%%%%%%%%%%%%%%%%%%%%%%%%%%%%%%%%%%%%%%%%%
\section{Geometrically finite orbifolds}
\label{secgefi}
%%%%%%%%%%%%%%%%%%%%%%%%%%%%%%%%%%%%%%%%%%%%%%

Recall that $O=\Gamma\backslash X$ is called \emph{convex cocompact}
if $M_c(\Gamma)=\Gamma\backslash(X\cup\Omega)$ is compact.
More generally and following Bowditch \cite[Definition on p.\,265]{Bowditch95},
we say that $O$ is \emph{geometrically finite} if $M_c(\Gamma)$ has at most finitely many ends
and each end of $M_c(\Gamma)$ is parabolic;
the latter notion is reviewed in \cref{susepe} below.

%%%%%%%%%%%%%%%%%%%%%%%%%%%%%%%%%%%%%%%%%%%%%%
\subsection{Convex core}
\label{susecc}
%%%%%%%%%%%%%%%%%%%%%%%%%%%%%%%%%%%%%%%%%%%%%%
For any two points $x,y\in X_c$, we denote by $[x,y]$ the geodesic connecting them.
Since $\Lambda$ is $\Gamma$-invariant,
the closed convex hull $H_c$ of $\Lambda$ in $X_c$ is $\Gamma$-invariant.
By \cite[Theorem 3.3]{Anderson83} or \cite[Corollary 2.5.3]{Bowditch95}, $H_c \cap X_\iota = \Lambda$.
If $|\Lambda|\ge2$, then
\begin{align}\label{covlim}
	H = H_c\cap X \ne \emptyset.
\end{align}
We may retract $X$ along the connecting geodesics $[\pi_H(x),x]$ onto $H$,
and this deformation retraction is $\Gamma$-equivariant.
We also obtain an induced orthogonal projection $\pi_C \colon O \to C$,
where $C$ denotes the \emph{convex core} of $O$, $C = C_\Gamma = \Gamma\backslash H$.
We get that $C$ is a deformation retract of $O$,
where the retraction is along the geodesics $[\pi_C(x),x]$.

%%%%%%%%%%%%%%%%%%%%%%%%%%%%%%%%%%%%%%%%%%%%%%
\subsection{Parabolic groups, points, and ends}
\label{susepe}
%%%%%%%%%%%%%%%%%%%%%%%%%%%%%%%%%%%%%%%%%%%%%%
We now explain the notion of parabolic ends.
Let $G$ be a parabolic group of isometries of $X$ with fix point $p\in X_\iota$.
Then $\Omega_G=X_\iota\setminus\{p\}$
and $M_c(G)=G\backslash(X\cup\Omega_G)$ has one end, \emph{the one coming from $p$}:
For any $x\in X$ and $\theta>0$, let
\begin{align}\label{cpx}
	C_p(x) = \cap_{g\in G}HC(gx,p,\theta)\setminus\{p\}
	\subseteq X_c\setminus\{p\} = X\cup\Omega_G,
\end{align}
where $HC(y,p,\theta)$ denotes the closed convex hull of the geodesic cone
in $X_c$ with apex at $y$, central direction $p$, and opening angle $\theta$.
Then
\begin{enumerate}
\item
$C_p(x)$ is a $G$-invariant, closed, and convex subset of $X\cup\Omega_G$;
\item
for any given sufficiently small $\theta>0$, the $G\backslash C_p(x)$, $x\in X$,
constitute a basis of neighborhoods of the unique end of $M_c(G)$.
\end{enumerate}
Compare with \cite[255:14--21]{Bowditch95}.

We say that a point $p\in X_\iota$ is a \emph{parabolic point of $\Gamma$}
if the stabilizer $G=\Gamma_p$ of $p$ in $\Gamma$ is a parabolic group such that,
for any $x\in X$ sufficiently close to $p$ and sufficiently small $\theta>0$, the set $C_p(x)$,
defined with respect to $\Gamma_p$, is \emph{precisely invariant};
that is, 
\begin{align}\label{precise}
	g\in \Gamma
	\hspace{2mm}\text{and}\hspace{2mm}
	gC_p(x) \cap C_p(x) \ne \emptyset
	\hspace{3mm}\Longrightarrow\hspace{3mm}
	g\in \Gamma_p, 
\end{align}
and then $gC_p(x)=C_p(x)$.
For any parabolic point $p$ of $\Gamma$,
$\Gamma_p\backslash C_p(x)$ embeds into $M_c(\Gamma)$
and the unique end of $\Gamma_p\backslash C_p(x)$ is an end of $O$, a \emph{parabolic end}.
Compare with \cite[264:1--20]{Bowditch95}.

%%%%%%%%%%%%%%%%%%%%%%%%%%%%%%%%%%%%%%%%%%%%%%
\section{Adapted coverings and cutoff functions}
\label{secpu}
%%%%%%%%%%%%%%%%%%%%%%%%%%%%%%%%%%%%%%%%%%%%%%

Given any discrete group $\Gamma$ of isometries of $X$, $\ve>0$, and $x\in X$,
let $\Gamma_\ve(x)$ be the subgroup of $\Gamma$
generated by the elements $g\in\Gamma$ with $d(x,gx)<\ve$.
For any subset $Y\subseteq X$, call
\begin{align}\label{teps}
	T_\ve(Y,\Gamma) = \{ x\in Y \mid |\Gamma_\ve(x)|=\infty\}
	\hspace{3mm}\text{and}\hspace{3mm}
	Y \setminus T_\ve(Y,\Gamma)
\end{align}
the \emph{$\ve$-thin} and \emph{$\ve$-thick part of $Y$} (with respect to $\Gamma$), respectively.
Recall that, by the Margulis lemma, $\Gamma_\ve(x)$ is virtually nilpotent if $0<\ve<\ve(m,a,b)$.
In what follows, we fix such an $\ve$. 

%%%%%%%%%%%%%%%%%%%%%%%%%%%%%%%%%%%%%%%%%%%%%%
\subsection{Coverings of $X$ and $O$}
\label{subcov}
%%%%%%%%%%%%%%%%%%%%%%%%%%%%%%%%%%%%%%%%%%%%%%
Let $O=\Gamma\backslash X$ be again a geometrically finite orbifold.
Let  $P$ be the set of parabolic points of $\Gamma$,
a $\Gamma$-invariant subset of $X_\iota$.
For any $p\in P$, set $U_p=T_\ve(X,\Gamma_p)$.

\begin{lem}\label{leme}
Let $p\in P$, $x\in U_p$, and $g\in\Gamma$.
\begin{enumerate}
\item\label{gx}
$\Gamma_\ve(x)$ is parabolic and contained in $\Gamma_p$.
\item\label{pq}
$U_p\cap U_q=\emptyset$ for all $q\ne p$ in $P$;
\item\label{gu}
$gU_p=U_{gp}$ and $gU_p\cap U_p\ne\emptyset$ implies that $g\in\Gamma_p$.
\end{enumerate}
\end{lem}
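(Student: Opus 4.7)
The plan is to prove the three statements in order, with (1) doing the real work and (2), (3) following by short arguments. Throughout, the key input is that $\ve$ has been chosen so that the Margulis lemma applies, making $\Gamma_\ve(x)$ virtually nilpotent, and then \cref{lemele} forces it to be elementary.

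For \eqref{gx}, I would first observe that $(\Gamma_p)_\ve(x)\subseteq\Gamma_\ve(x)$ directly from the definitions, since every generator of the former is a generator of the latter. The hypothesis $x\in U_p$ says $|(\Gamma_p)_\ve(x)|=\infty$, so $\Gamma_\ve(x)$ is infinite. Being virtually nilpotent and discrete, it is elementary by \cref{lemele}, hence either loxodromic or parabolic. To rule out loxodromic, I would exploit that $\Gamma_p$ is parabolic, so every non-identity element of the infinite subgroup $(\Gamma_p)_\ve(x)\subseteq\Gamma_\ve(x)$ is a parabolic isometry of $X$ with unique fixed point $p\in X_\iota$. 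A parabolic isometry cannot preserve any geodesic of $X$ as a set, because preserving a geodesic requires either fixing both endpoints or swapping them, both of which are incompatible with having only one fixed point at infinity. Hence a loxodromic group cannot contain a non-trivial parabolic element, ruling out that case. So $\Gamma_\ve(x)$ is parabolic, and since it contains elements fixing $p$, its unique fixed point must be $p$; therefore $\Gamma_\ve(x)\subseteq\Gamma_p$.

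For \eqref{pq}, I would argue by contradiction: if $x\in U_p\cap U_q$ with $p\ne q$, then applying \eqref{gx} on both sides gives $\Gamma_\ve(x)\subseteq\Gamma_p\cap\Gamma_q$. But $\Gamma_\ve(x)$ is an infinite parabolic group and so has a unique fixed point in $X_\iota$, whereas its elements now fix both $p$ and $q$, contradicting $p\ne q$.

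For \eqref{gu}, the equality $gU_p=U_{gp}$ is a conjugation computation: since $\Gamma_{gp}=g\Gamma_pg^{-1}$ and $g$ is an isometry, the correspondence $h\leftrightarrow ghg^{-1}$ identifies elements of $\Gamma_p$ moving $x$ by less than $\ve$ with elements of $\Gamma_{gp}$ moving $gx$ by less than $\ve$, so $(\Gamma_{gp})_\ve(gx)=g(\Gamma_p)_\ve(x)g^{-1}$, which has the same cardinality as $(\Gamma_p)_\ve(x)$. The second assertion then follows immediately from \eqref{pq}: since $P$ is $\Gamma$-invariant, $gp\in P$, and $\emptyset\ne gU_p\cap U_p=U_{gp}\cap U_p$ forces $gp=p$ by \eqref{pq}, i.e.\ $g\in\Gamma_p$.

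The only genuinely delicate step is the exclusion of the loxodromic alternative in \eqref{gx}; everything else is bookkeeping around the uniqueness of the fixed point of a parabolic group and the equivariance of the construction under $\Gamma$.
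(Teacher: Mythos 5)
Your overall plan is the same as the paper's: Margulis gives virtual nilpotency, \cref{lemele} gives elementariness, infiniteness rules out elliptic, and the parabolic nature of $\Gamma_p$ rules out loxodromic; parts (2) and (3) are handled by the same bookkeeping. There is, however, one imprecise step in (1). You assert that ``every non-identity element of the infinite subgroup $(\Gamma_p)_\ve(x)\subseteq\Gamma_\ve(x)$ is a parabolic isometry of $X$.'' That is too strong: a parabolic group in the sense of \cref{susegi} can contain elliptic elements of finite order (e.g.\ a finite-order rotation about a geodesic ending at $p$ preserves horospheres about $p$), and Bowditch's Proposition 4.2 only guarantees that parabolic groups \emph{contain} parabolic elements, not that they consist of them. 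Your subsequent argument, which excludes the loxodromic case by observing that a loxodromic group cannot contain a non-trivial parabolic element, therefore needs at least one such element and not all of them—but even ``at least one parabolic element in the subgroup $(\Gamma_p)_\ve(x)$'' is not immediate without further input. The paper sidesteps this by a cleaner observation: $(\Gamma_p)_\ve(x)$, as a subgroup of $\Gamma_p$, preserves horospheres about $p$ as sets, and an infinite discrete group that does so cannot be loxodromic, since loxodromic groups translate along their axis and thus preserve no horosphere about any point. This gives the same conclusion without any claim about individual elements. Once you swap in this horosphere argument, your proof matches the paper's essentially line for line, including the conjugation argument for (3), which the paper leaves as ``clear.''
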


\begin{proof}
\eqref{gx}
By the Margulis Lemma, $\Gamma_\ve(x)$ is virtually nilpotent, hence elementary, by \cref{lemele}.  
Since $\Gamma_{p,\ve}(x)\subseteq\Gamma_\ve(x)$ and $\Gamma_{p,\ve}(x)$ is infinite,
$\Gamma_\ve(x)$ is not elliptic.
Furthermore, $\Gamma_{p,\ve}(x)$ is not loxodromix since it fixes $p$
and horosheres about $p$ as sets.
Hence $\Gamma_{p,\ve}(x)$ is parabolic,
therefore also $\Gamma_\ve(x)$ with fix point $p$.
In particular, $\Gamma_\ve(x)\subseteq\Gamma_p$.

\eqref{pq}
For $x\in U_p\cap U_q$, we have from \eqref{gx} that $\Gamma_\ve(x)$ is parabolic
and fixes $p$ and $q$. Hence $p=q$. 

\eqref{gu}
The first assertion is clear and the second follows immediately from the first in combination with \eqref{pq}.
\end{proof}

By the $\Gamma$-invariance of the families of $U_p$ and $H\cap U_p$,
they project to open subsets $V_p$ of $O=\Gamma\backslash X$
and $C\cap V_p$ of the convex core $C=\Gamma\backslash H$ of $O$, respectively.
For any $p\in P$,
\begin{align}\label{covp}
	\pi_H^{-1}(H\cap U_p) \to \pi_C^{-1}(C\cap V_p)
	= \Gamma_p\backslash\pi_H^{-1}(H\cap U_p)
\end{align}
is a Riemannian orbifold covering with the parabolic group $\Gamma_p$
as group of covering transformations.

Since $O$ has only finitely many parabolic ends, $\Gamma\backslash P$ is finite,
and hence there are only finitely many different $V_p$.
Moreover, by \cite[Proposition 4.1.2]{Bowditch95}, the
\begin{align}
	\pi_C^{-1}(C\cap V_p), \quad p \in \Gamma\backslash P,
\end{align}
are neighborhoods of the parabolic ends of $O$.
In particular,
\begin{align}\label{ceps}
	C_\ve = C\setminus\cup_{p\in\Gamma\backslash P} V_p
\end{align}
is compact.
The preimage of $C_\ve$ in $X$ is
\begin{align}\label{heps}
	H_\ve = H \setminus \cup_{p\in P} U_p.
\end{align}
Now choose metric balls $U_i=B(x_i,r_i)$ with $x_i\in H_\ve$, $i\in I$,
finite in number modulo $\Gamma$,
such that $gU_i\cap U_i\ne\emptyset$ for $g\in\Gamma$ implies
that $g$ belongs to the stabilizer $\Gamma_i=\Gamma_{x_i}$ of $x_i$ in $\Gamma$
and such that $H_\ve\subseteq\cup_{i\in I}U_i$.
Then $C_\ve\subseteq\cup_{i\in I}V_i$,
where $V_i$ is the image of $U_i$ in $O$.
By the setup, there are only finitely many different $V_i$.
For each $i$, $\pi_C^{-1}(C\cap V_i)$ is an open subset of $O$ and
\begin{align}\label{covi}
	\pi_H^{-1}(H\cap U_i) \to \pi_C^{-1}(C\cap V_i)
\end{align}
is a Riemannian orbifold covering with the finite group $\Gamma_i$ as group of covering transformations.

In conclusion, we obtain a $\Gamma$-invariant locally finite covering $\mathcal U$ of $X$
by the family $\pi_H^{-1}(H\cap U)$ of open subsets of $X$,
where $U$ runs over the $U_p$ and $U_i$
and a corresponding finite covering $\mathcal V$ of $O$
by the family $\pi_C^{-1}(C\cap V)$ of open subsets of $O$,
where $V$ runs over the $V_p$ and $V_i$.

%%%%%%%%%%%%%%%%%%%%%%%%%%%%%%%%%%%%%%%%%%%%%%
\subsection{Cutoff functions}
\label{subcut}
%%%%%%%%%%%%%%%%%%%%%%%%%%%%%%%%%%%%%%%%%%%%%%
Choose a $\Gamma$-invariant family $(\psi_U)_{U\in\mathcal U}$ of nonnegative smooth functions on $X$
such that $\supp\psi_U\subseteq U$ and such that the $\psi_U^2$ are a partition of unity
on $H$ subordinate to $\mathcal U$.
Set $\vf_U=\psi_U\circ\pi_H$.
Note that the $\vf_U$ may not be smooth, but are at least $C^{0,1}$ on $X\setminus H$.

By $\Gamma$-invariance, we obtain corresponding smooth functions $\psi_V$
such that $\supp\psi_V\subseteq V$ and $C^{0,1}$-functions $\vf_V=\psi_V\circ\pi_C$.

\begin{lem}\label{pu}
Given $\delta>0$, there is an $r>0$ such that $|\nabla\vf_U(x)|<\delta$
for all $U\in\mathcal U$ and $x\in X$, where $\pi_{H*x}$ exists and $d(x,H_\ve)>r$.
\end{lem}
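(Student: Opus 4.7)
My plan is to decompose $\vf_U = \psi_U \circ \pi_H$ via the chain rule, obtaining
\begin{align*}
	|\nabla\vf_U(x)| \le |\nabla\psi_U(\pi_H(x))| \cdot \|d\pi_H|_x\|,
\end{align*}
and to control each factor separately: the first reflects flatness of $\psi_U$ away from $H_\ve$, while the second reflects the exponential contraction of $d\pi_H$ far from $H$.

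First, I would bound $\|d\pi_H|_x\|$. Since $\pi_H$ is constant along $c_x$, the vector $c_x'(r) \in T_xX$ (with $r = d(x,H)$) lies in the kernel of $d\pi_H|_x$, so it suffices to bound $d\pi_H|_x$ on $c_x'(r)^\perp$. For $u$ in this subspace, \cref{lemtec2} gives $d\pi_H|_x(u) = J_u(0)$, where $J_u$ is the Jacobi field along $c_x$ perpendicular to $c_x$ with $J_u(r) = u$ and $J_u'(r) = \nabla_u\nabla f$. Convexity of $H$ ensures that the associated initial datum satisfies $Q(0) \ge 0$, so \cref{curvest} applies and yields $(\ln|J_u|)'(t) \ge a\tanh(at)$ on $(0,r]$. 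Integrating from $0$ to $r$ gives $|J_u(0)| \le |u|/\cosh(ar)$, whence
\begin{align*}
	\|d\pi_H|_x\| \le 1/\cosh(a\,d(x,H)).
\end{align*}

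Second, I would show that there exist constants $C, R > 0$ with $|\nabla\psi_U| \le C$ on $H$ and with $\nabla\psi_U$ supported in the $R$-neighborhood of $H_\ve$ in $H$, for every $U \in \mathcal U$. Modulo $\Gamma$ there are only finitely many $\psi_U$: those attached to the finitely many orbits $p \in \Gamma\backslash P$ (each $\psi_{U_p}$ being $\Gamma_p$-invariant) and to the balls $U_i$. The $\psi_{U_i}$ are smooth cutoffs supported in $U_i = B(x_i, r_i)$ with $x_i \in H_\ve$, so their gradients are bounded and supported within bounded distance of $H_\ve$. For the parabolic pieces, on $U_p$ the partition of unity relation forces $\psi_{U_p}^2 = 1 - \sum_i\psi_{U_i}^2$, which equals $1$ deep in the cusp; hence $\nabla\psi_{U_p}$ vanishes there and is supported where some $\psi_{U_i}$ is still nonzero, a $\Gamma_p$-invariant set bounded modulo $\Gamma_p$, and hence at bounded distance from $H_\ve$ in $H$. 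Uniform boundedness of $|\nabla\psi_{U_p}|$ despite the square root is arranged by choosing the $\psi_{U_i}$ so that $1 - \sum_i\psi_{U_i}^2$ vanishes quadratically on its zero set, which is the square-root device used in \cite[Section 5.3]{Li20}.

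Combining the two estimates, if $d(x,H_\ve) > r$ and $\nabla\psi_U(\pi_H(x)) \ne 0$, then $d(\pi_H(x), H_\ve) \le R$, and the triangle inequality gives $d(x, H) = d(x, \pi_H(x)) \ge d(x, H_\ve) - R > r - R$. Therefore
\begin{align*}
	|\nabla\vf_U(x)| \le \frac{C}{\cosh(a(r-R))},
\end{align*}
which is less than $\delta$ once $r$ is sufficiently large. The main obstacle is ensuring that the square-root construction of $\psi_{U_p}$ produces a function with uniformly bounded gradient, for which the quadratic vanishing of $1 - \sum_i\psi_{U_i}^2$ near $H_\ve$ is essential.
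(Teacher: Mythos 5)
Your proof takes essentially the same approach as the paper: decompose $\nabla\vf_U$ via the chain rule, bound $\|\pi_{H*x}\|$ by $1/\cosh(a\,d(x,H))$ using \cref{lemtec2} and \cref{curvest}, observe that the $\nabla\psi_U$ are uniformly bounded and that the $\vf_U$ are locally constant once $\pi_H(x)$ leaves the $\Gamma$-invariant neighborhood $U_0$ of $H_\ve$, and conclude via the triangle inequality. The extra remarks you make about the square-root construction ensuring smoothness of the $\psi_U$ correspond to the device from \cite[Section 5.3]{Li20} that the paper also invokes; they concern the setup in \cref{subcut} rather than the proof of the lemma itself, and the paper simply posits that the smooth family $(\psi_U)$ has been chosen.
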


\begin{proof}
By the setup, $U_0=\cup_{i\in I}U_i$ is a $\Gamma$-invariant neighborhood of $H_\ve$.
For any $p\in P$, $\vf_{U_p}=1$ on $\pi_H^{-1}(H\cap(U_p\setminus U_0))$.
In particular, all the $\vf_U$ are constant on $\cup_{p\in P}\pi_H^{-1}(U_p\setminus U_0)$
and, hence, their gradients vanish there.
On the other hand, there is an upper bound $C_0$ on the norm of the gradients of the $\psi_U$.
Furthermore, for any $x\in X\setminus H$, where $\pi_{H*x}$ exists,
vectors tangent to the minimal geodesic from $x$ to $H$ are in the kernel of $\pi_{H*x}$,
and, for any vector $u\in T_xX$ perpendicular to it, $|\pi_{H*x}u|\le|u|/\cosh(ar)$
by \cref{lemtec2} and \cref{curvest}, where $r=d(x,H)$.
\end{proof}

In the proof of \cref{sigmal}, we will also need that the functions $\nabla\vf_U$ are $C^2$
and that $\Delta\vf_U$ tends to $0$ uniformly as the distance to $H_\ve$ tends to infinity.
Since the above functions $\vf_U$ are, in general, only $C^{0,1}$,
we replace $H$ by a smooth convex domain
so that the corresponding new functions $\vf_U$ become smooth:
We note first that, for $f$ the distance function to $H$ and $\rho>0$,
any sublevel $\{f\le\rho\}$ is a strictly convex $C^{1,1}$-domain in $X$, by \cref{curvest2}.
Then we use \cref{papa} to replace $\{f\le\rho\}$ by a smooth and strictly convex domain $H'$ in $X$ with
\begin{align*}
	H \subseteq \{f\le\rho\} \subseteq H' \subseteq U_\eta(\{f\le\rho\}) = \{f\le\rho+\eta\}.
\end{align*}
By the setup and \cref{papa}, $H'$ is invariant under $\Gamma$.
For sufficiently small $\rho,\eta>0$, we have $H'\subseteq\cup_{U\in\mathcal U}U$.

Now we choose a $\Gamma$-invariant family $(\psi_U)_{U\in\mathcal U}$
of nonnegative smooth functions on $X$ such that $\supp\psi_U\subseteq U$
and such that the $\psi_U^2$ are a partition of unity on $H'$ subordinate to $\mathcal U$.
We also replace the orthogonal projection onto and the distance to $H$
by the corresponding projection onto and distance to $H'$.
Note that $H'\setminus\cup_{p\in P}U_p$ is compact modulo $\Gamma$
and that the distances to $H_\ve$ and to $H'\setminus\cup_{p\in P}U_p$
differ by at most $\rho+\eta$. 

\begin{lem}\label{pus}
Given $\delta>0$,
there is an $r>0$ such that \[|\nabla\vf_U(x)|,|\nabla^2\vf_U(x)|<\delta\]
for all $U\in\mathcal U$ and $x\in X$ with $d(x,H_\ve)>r$.
\end{lem}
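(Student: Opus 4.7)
The plan is to exploit the chain rule for $\vf_U = \psi_U \circ \pi_{H'}$. Since $H'$ is now a smooth, strictly convex, $\Gamma$-invariant domain, the distance function $f'(\cdot) = d(\cdot,H')$ and the projection $\pi_{H'}$ are smooth on $X\setminus H'$, so each $\vf_U$ is smooth outside $H'$. The $\psi_U$ are $\Gamma$-equivariant lifts of finitely many smooth compactly supported functions on $O$, hence their $C^2$-norms are uniformly bounded in $U$.

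As in the proof of \cref{pu}, the first reduction is that every $\vf_U$ is locally constant on $\cup_{p\in P}\pi_{H'}^{-1}(U_p\setminus U_0)$, where $U_0=\cup_{i\in I}U_i$, so that $\nabla\vf_U$ and $\nabla^2\vf_U$ vanish identically there. Consequently, any $x$ at which these derivatives could fail the $\delta$-bound must satisfy $\pi_{H'}(x)\in U_0$, a set lying within bounded distance of $H_\ve$; in particular, if $d(x,H_\ve)>r$, then $f'(x)\ge r-C$ for a constant $C$ depending only on the setup. The chain rule reads
\begin{align*}
  \nabla\vf_U|_x
  &= d\pi_{H'}|_x^*\bigl(\nabla\psi_U|_{\pi_{H'}(x)}\bigr), \\
  \nabla^2\vf_U|_x(Y,Z)
  &= \nabla^2\psi_U|_{\pi_{H'}(x)}\bigl(d\pi_{H'}Y,\,d\pi_{H'}Z\bigr)
    + \bigl\la \nabla\psi_U|_{\pi_{H'}(x)},\,(\nabla d\pi_{H'})|_x(Y,Z)\bigr\ra,
\end{align*}
so the task reduces to showing that both $\|d\pi_{H'}|_x\|$ and $\|\nabla d\pi_{H'}|_x\|$ tend to zero uniformly as $f'(x)\to\infty$.

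The bound on $\|d\pi_{H'}|_x\|$ is already at hand via the Jacobi field identification $d\pi_{H'}|_x(u)=J_u(0)$ from \cref{lemtec2}, the convexity of $H'$, and Rauch II (\cref{rauch2}), which together give $\|d\pi_{H'}|_x\|\lesssim 1/\cosh(af'(x))$. The principal obstacle is the bound on $\|\nabla d\pi_{H'}|_x\|$, which measures how the pair $(\pi_{H'}(x),d\pi_{H'}|_x)$ varies in $x$ and is a \emph{variational derivative of Jacobi fields}. Transverse differentiation of the Riccati equation \eqref{riccati} for $S=\nabla^2 f'$ produces a linear ODE along $c_x$ whose inhomogeneity is controlled by $R$ and $\nabla R$; invoking the forthcoming estimates of \cref{secjac2}, whose hypothesis $\|\nabla R\|_\infty<\infty$ is precisely what propagates the assumption of \cref{sigmal} into the present lemma, yields exponential decay $\|\nabla d\pi_{H'}|_x\|\to 0$ as $f'(x)\to\infty$. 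Combining both decay statements with the uniform $C^2$-bound on the $\psi_U$ and choosing $r$ sufficiently large yields the desired $\delta$-bound on both $|\nabla\vf_U|$ and $|\nabla^2\vf_U|$.
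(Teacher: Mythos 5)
Your proposal is correct and follows essentially the same route as the paper: reduce via the partition-of-unity structure to points with $\pi_{H'}(x)$ in a compact set modulo $\Gamma$, apply the chain rule to split $\nabla^2\vf_U$ into a term involving $d\pi_{H'}$ (controlled by the convexity and Jacobi-field comparison estimates, giving decay $\lesssim 1/\cosh(af'(x))$) and a term involving $\nabla d\pi_{H'}$, which is precisely the variational derivative of Jacobi fields estimated in \cref{secjac2} using $\|\nabla R\|_\infty<\infty$. One small imprecision: the paper obtains the perturbed equation (\cref{jk}) by transversally differentiating the Jacobi equation rather than the Riccati equation, and the first-derivative decay is via \cref{curvest} (the Jacobi field $J_u$ here has $J_u'(0)=\nabla_u\nabla f\neq 0$, so \cref{rauch2} does not directly apply), but these are cosmetic and the underlying argument is the same.
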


The proof of the estimate of $\nabla\vf_U$ is the same as that in the proof of \cref{pu}.
The proof of the more sophisticated estimate of $\nabla^2\vf_U$ is contained in \cref{secjac2}.

%%%%%%%%%%%%%%%%%%%%%%%%%%%%%%%%%%%%%%%%%%%%%%
\section{The main results}
\label{secray}
%%%%%%%%%%%%%%%%%%%%%%%%%%%%%%%%%%%%%%%%%%%%%%

Let $\Gamma$ be a discrete group of isometries of $X$
and $O=\Gamma\backslash X$ be the associated orbifold quotient;
to start with, not necessarily geometrically finite.
Let $E$ be a Riemannian vector bundle over $O$
and $A$ be a formally self-adjoint elliptic differential operator on $E$ (of any order).
Lift $E$ and $A$ to $X$.
Assume that $A$ is essentially self-adjoint over $O$ and $X$.

In what follows, we use that $\lambda\in\R$ belongs to $\spec(A,O)$ if and only if
there is a sequence $(u_n)$ in $C^\infty_c(O,E)$ such that
\begin{align}\label{lamspec}
	\limsup\|u_n\|_2>0
	\hspace{3mm}\text{and}\hspace{3mm}
	\| Au_n-\lambda u_n\|_2 \to 0.
\end{align}
Moreover, $\lambda$ belongs to $\spec_{\ess}(A,O)$ if and only if there is such a sequence
which \emph{leaves any compact subset of $O$ eventually}, that is, such that,
for any compact subset $C$ of $O$,
\begin{align}\label{lamspece}
	\supp u_n \cap C = \emptyset
\end{align}
for all sufficiently large $n$.
By normalizing the $u_n$, one may also require that $\|u_n\|_2=1$ for all $n$.

We start with a remark, which seems to be well known;
compare with \cite[Theorem C]{CarronPedon04}.
Since we use it in our equality discussions, we review it shortly.

\begin{prop}\label{converse}
If $A$ is uniform and $\Omega_\Gamma\neq\emptyset$, then
\begin{align*}
	\spec(A,X)=\spec_{\ess}(A,X)\subseteq\spec_{\ess}(A,O).
\end{align*}
\end{prop}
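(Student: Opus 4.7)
The plan is to build singular sequences on $O$ out of singular sequences on $X$ by translating via the uniform $G$-action and then transferring to $O$ through a region provided by a chosen point $\xi\in\Omega_\Gamma$; both assertions of the proposition follow from the same construction. First, for $\spec(A,X)=\spec_{\ess}(A,X)$, I pick $\lambda\in\spec(A,X)$ and a corresponding sequence $u_n\in C^\infty_c(X,E)$ with $\|u_n\|_2=1$ and $\|(A-\lambda)u_n\|_2\to0$. The cocompactness of the $G$-action yields a compact $K\subseteq X$ with $GK=X$, and I would choose $g_n\in G$ so that $\supp(g_nu_n)$ leaves every compact subset of $X$; by the $G$-invariance of $A$, the translates $v_n:=g_nu_n$ remain $L^2$-normalized and satisfy $\|(A-\lambda)v_n\|_2\to0$, which witnesses $\lambda\in\spec_{\ess}(A,X)$ via \eqref{lamspec}--\eqref{lamspece}.

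For $\spec_{\ess}(A,X)\subseteq\spec_{\ess}(A,O)$, fix $\xi\in\Omega_\Gamma$. An infinite discrete stabilizer of an ideal point would be parabolic or loxodromic and hence have all its fixed points in $\Lambda_\Gamma$; therefore $\Gamma_\xi$ is finite. Proper discontinuity of the $\Gamma$-action on $X\cup\Omega_\Gamma$ then yields an open neighborhood $W$ of $\xi$ in $X\cup\Omega_\Gamma$ which is precisely $\Gamma_\xi$-invariant, that is, $gW\cap W\ne\emptyset\Rightarrow g\in\Gamma_\xi$. The trace $U:=W\cap X$ therefore projects via a finite quotient $U\to\Gamma_\xi\backslash U$ embedding into $O$; moreover, since geodesic rays diverge exponentially in the negatively curved space $X$, $U$ contains metric balls of arbitrarily large radius as one approaches $\xi$ along a ray.

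Given now a singular sequence $(u_n)$ for $\lambda\in\spec_{\ess}(A,X)$ with compact supports and $\|u_n\|_2=1$, I would choose $g_n\in G$, using the cocompactness of $G$ together with the radial largeness of $U$, so that $\supp(g_nu_n)\subseteq U$ and so that the image of this support in $O$ under $U\to\Gamma_\xi\backslash U\hookrightarrow O$ leaves every compact subset of $O$, by pushing it deeper toward $\xi$. The pushforward $\tilde u_n$ of $g_nu_n$ to $O$ then satisfies $\|\tilde u_n\|_2=|\Gamma_\xi|^{-1/2}$ and $\|(A-\lambda)\tilde u_n\|_2\to0$, because the lift of $A$ from $O$ to $X$ is $A$ itself. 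With supports escaping every compact subset of $O$, this gives $\lambda\in\spec_{\ess}(A,O)$.

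The main obstacle is the coordinated choice of translations $g_n$ in this last step: one must reconcile the bounded-scale nature of the cocompact $G$-action with the radial growth of $U$ near $\xi$, selecting $g_n$ so that $\supp(g_nu_n)\subseteq U$ and simultaneously so that its image escapes every compact subset of $O$. A pigeonhole matching of the support diameter of $u_n$ to a sufficiently large ball in $U$ (far enough toward $\xi$) handles this; all remaining pieces are routine manipulations of Weyl sequences.
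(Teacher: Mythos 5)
Your strategy — translating singular sequences by the cocompact $G$-action and then transferring to $O$ through a neighborhood of an ideal point of $\Omega_\Gamma$ — is exactly the one the paper uses. One piece of your argument, however, is not quite right as written. You work with an arbitrary $\xi\in\Omega_\Gamma$ and a precisely $\Gamma_\xi$-invariant neighborhood $W$, and then assert that the pushforward $\tilde u_n$ of $g_nu_n$ to $O$ has $\|\tilde u_n\|_2=|\Gamma_\xi|^{-1/2}$. This norm identity would hold if $g_nu_n$ were a $\Gamma_\xi$-invariant section of $E$ over $U=W\cap X$, but there is no reason for that: $g_nu_n$ is just a translate of $u_n$. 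Without $\Gamma_\xi$-invariance, "the pushforward" isn't canonically defined (one must either average over $\Gamma_\xi$, which can shrink the $L^2$-norm arbitrarily, or restrict to a fundamental domain, which needs further justification that the supports can be so placed). The paper avoids this entirely by choosing $x\in\Omega_\Gamma$ with \emph{trivial} isotropy — such a point exists since loxodromic and parabolic fixed points lie in $\Lambda$, while the elliptic fixed-point sets in $X_\iota$ form a countable union of nowhere dense closed sets — and then picking $U$ with $\gamma U\cap U\ne\emptyset\Rightarrow\gamma=1$, so that the transfer to $O$ is a literal isometric identification. If you either replace $\xi$ by a nearby point of trivial isotropy, or explicitly place $\supp(g_nu_n)$ in a fundamental domain for $\Gamma_\xi$ on $U$ that misses the fixed-point locus, your argument closes. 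The rest (the nested neighborhoods $U\supseteq U_1\supseteq\cdots$ shrinking to $\{\xi\}$ forcing supports to escape every compact set, and the pigeonhole placement of $g_n\supp u_n$ inside arbitrarily large balls near $\xi$) matches the paper, with your version supplying slightly more detail on the existence of the $g_n$ than the paper spells out.
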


\begin{proof}
Let $x\in\Omega_\Gamma$ be a point with trivial isotropy group.
Choose an open neighborhood $U$ of $x$ in $X_c$
such that $\gamma U\cap U\ne\emptyset$ for $\gamma\in\Gamma$
implies that $\gamma=1$.
Let $U\supseteq U_1\supseteq U_2\supseteq\dots$
be a nested sequence of neighborhoods of $x$ in $X_c$ such that $\cap U_n=\{x\}$.

Given $\lambda\in\spec(A,X)$, choose a sequence of $u_n$ as in \eqref{lamspec}.
Then there are $g_n\in G$ such that, for each $n$,
the support of $v_n=g_nu_ng_n^{-1}$ is contained in $U_n$.
By the assumption on $U$, the $v_n$ can be pushed down to $E$ over $O$.
By the assumption on the $U_n$,
the supports of the $v_n$ and the pushed down $v_n$ leave any compact subset
of $X$ respectively $O$ eventually.
Hence $\lambda$ belongs to $\spec_{\ess}(A,X)$ and $\spec_{\ess}(A,O)$.
\end{proof}

We assume from now on that $O=\Gamma\backslash X$ is geometrically finite.
We start with the case where $\Gamma$ is elementary, that is, $|\Lambda|\le2$,
where the arguments apply to all essentially self-adjoint differential operators $A$.

\begin{prop}\label{elementary}
If $\Gamma$ is elementary, then
\begin{align*}
	\spec(A,O) \subseteq \spec(A,X)
	\hspace{3mm}\text{and}\hspace{3mm}
	\spec_{\ess}(A,O) \subseteq \spec_{\ess}(A,X).
\end{align*}
Moreover, if $A$ is uniform, then $\spec_{\ess}(A,X)=\spec_{\ess}(A,O)$.
\end{prop}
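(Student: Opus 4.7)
The plan is to split into the three types of elementary groups and handle each separately. The equality statement in the uniform case will then follow from \cref{converse}, since $\Omega_\Gamma\neq\emptyset$ whenever $\Gamma$ is elementary (the limit set $\Lambda$ being empty, a singleton, or a pair in the elliptic, parabolic, and loxodromic cases respectively).

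If $\Gamma$ is elliptic then it is finite, so $p\colon X\to O$ is a finite orbifold covering. For $u\in C^\infty_c(O,E)$, the pullback $\tilde u:=u\circ p$ lies in $C^\infty_c(X,p^*E)$ with $\|\tilde u\|_2^2=|\Gamma|\,\|u\|_2^2$ and $A\tilde u-\lambda\tilde u=(Au-\lambda u)\circ p$, so $\|A\tilde u-\lambda\tilde u\|_2/\|\tilde u\|_2=\|Au-\lambda u\|_2/\|u\|_2$. Applying this to an approximating sequence as in \eqref{lamspec} gives $\spec(A,O)\subseteq\spec(A,X)$. Because $p^{-1}(K)$ is compact in $X$ whenever $K\subseteq O$ is compact, the support condition \eqref{lamspece} also transfers, yielding $\spec_{\ess}(A,O)\subseteq\spec_{\ess}(A,X)$.

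If $\Gamma$ is loxodromic or parabolic then $\Gamma$ is infinite and virtually nilpotent (by \cite[Propositions 4.1 and 4.2]{Bowditch95} in the parabolic case, trivially in the loxodromic case), hence amenable. The covering $p\colon X\to O$ is therefore an infinite amenable orbifold covering, and I invoke \cref{pp}. Given $\lambda\in\spec(A,O)$ with an approximating sequence $u_n\in C^\infty_c(O,E)$, I produce $v_n\in C^\infty_c(X,p^*E)$ with $\|v_n\|_2=\|u_n\|_2$, $\supp v_n\subseteq p^{-1}(\supp u_n)$, and $\|Av_n-\lambda v_n\|_2\le\|Au_n-\lambda u_n\|_2+1/n$; this places $\lambda$ in $\spec(A,X)$. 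For $\lambda\in\spec_{\ess}(A,O)$ the support inclusion transfers the essentiality condition automatically: for any compact $K\subseteq X$ the projection $p(K)$ is compact in $O$, so $\supp u_n\cap p(K)=\emptyset$ eventually, whence $\supp v_n\subseteq p^{-1}(\supp u_n)$ is disjoint from $K$.

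The one step that needs a word of justification is the invocation of \cref{pp}, which is stated in \cite{Polymerakis20a} for smooth Riemannian coverings of manifolds; the underlying construction of cutoff functions uses only the geometric amenable action of the deck transformation group and therefore extends verbatim to the orbifold situation. Everything else in the argument is routine bookkeeping on norms and supports.
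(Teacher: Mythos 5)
Your argument is correct and reaches the same conclusions, but in the parabolic and loxodromic cases it takes a different route than the paper. You apply \cref{pp} directly to the orbifold covering $X\to O$, asserting that the result extends verbatim from Riemannian coverings of manifolds to the present manifold-over-orbifold setting. The paper avoids this extension altogether: for $\Gamma$ parabolic it chooses a normal, finitely generated, torsion-free nilpotent subgroup $N\le\Gamma$ of finite index (and for $\Gamma$ loxodromic the infinite cyclic $Z$), so that $X\to O$ factors as the genuine \emph{manifold} covering $X\to N\backslash X$ composed with the finite orbifold covering $N\backslash X\to O$, handles the latter by the elliptic-case argument, and applies \cref{pp} exactly as stated to the former. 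Your route is shorter and dispenses with the structure theory needed to produce $N$, but it leaves a gap: the assertion that the cutoff construction of \cite[Props. 4.12--4.13]{Polymerakis20a} "extends verbatim" to orbifold quotients (where the deck action is not free and the base carries an orbifold rather than a manifold structure) is plausible --- the singular locus has measure zero, the lift of a smooth orbifold section is a smooth $\Gamma$-invariant section on $X$, and the relevant estimates are $L^2$ --- but it is a claim that would need to be checked against the actual proof in loc.\,cit., which the paper is careful not to rely on. Your treatment of the elliptic case via direct pullback of approximating sequences and of the equality via \cref{converse} matches the paper.
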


\begin{proof}
Since $\Omega_\Gamma\ne X_\iota$ in each case,
the last assertion follows immediately from \cref{converse},
once the second asserted inclusion is established. 
Thus it suffices now to prove the asserted inclusions.

\textit{Case 0:}
Assume first that $|\Lambda|=0$ or, equivalently, that $\Gamma$ is finite.
It is well known that lifting sections from $O$ to $X$ leads to an isomorphism between $A$
on sections of $E$ over $O$ and $A$ on $\Gamma$-invariant sections of $E$ over $X$.
Thus the spectrum and essential spectrum of $A$ on $E$ over $O$ are actually the same
as the spectrum and essential spectrum of $A$
on the subspcae of $\Gamma$-invariant sections of $E$ over $X$.

\textit{Case 1:}
Assume now that $|\Lambda|=1$ or, equivalently, that $\Gamma$ is parabolic.
Then $\Gamma$ is a finitely generated and virtually nilpotent group
with unique fix point $p\in X_\iota$ and $\Omega=X_\iota\setminus\{p\}$.
In particular, the covering $X\to O$ is infinite and amenable.
Let $N$ be a normal, finitely generated, and torsion free nilpotent subgroup of $\Gamma$.
Then the orbifold covering $X\to O$ decomposes into the covering $X\to N\backslash X$ of manifolds
and the finite normal orbifold covering $N\backslash X\to O$
with group $\Gamma/N$ as group of covering transformations.
To the latter we apply the arguments from the first case, substituting  $N\backslash X$ for $X$,
to obtain that $\spec(A,O)\subseteq \spec(A,N\backslash X)$
and $\spec_{\ess}(A,O) \subseteq \spec_{\ess}(A,N\backslash X)$.
This reduces the assertions to the case of the amenable covering $X\to N\backslash X$ of manifolds,
where they follow from \cref{pp}.

\textit{Case 2:}
Assume finally that $|\Lambda|=2$.
Then $\Gamma$ is a finite extension of a normal infinite cyclic subgroup $Z\subseteq\Gamma$
such that $\Gamma$ fixes a geodesic $[p,q]\in X_c$ as a set.
As in the previous case, we may pass now to the finite cover $Z\backslash X$ of $O$;
that is, we may assume that $O=Z\backslash X$.
But $Z$ is amenable and hence the assertions follow again from \cref{pp}, as in the previous case.
\end{proof}

In the proof of \cref{elementary}, Lemmas \ref{pu} and \ref{pus} do not play a role.
The latter will be used to reduce the case of general geometrically finite orbifolds
to situations similar to the ones considered above,
and it is this reduction, where we need assumptions on the operators.

In the proofs of Theorems \ref{bottom} and \ref{sigmaf},
let $\mathcal U$ and $\mathcal V$ be locally finite respectively finite open coverings
of $X$ and $O$ and $(\vf_U)_{U\in\mathcal U}$ and $(\vf_V)_{V\in\mathcal V}$
be associated families of functions in $C^{0,1}(X)$ and $C^{0,1}(O)$ as in \cref{pu}.

Let $E$ be a Riemannian vector bundle over $O$ together with a metric connection $\nabla$. 
Let $A$ be a formally self-adjoint Laplace type operator on $E$ over $O$.
Then $B=A-\Delta$ is a formally self-adjoint differential operator of first order,
where $\Delta=\nabla^*\nabla$.
For $u\in C^\infty_c(O,E)$, we obtain
\begin{align*}
	\la Au,u\ra_2 &- \sum_{V\in\mathcal V} (\|\nabla(\vf_Vu)\|_2^2 + \la B(\vf_Vu),\vf_Vu\ra_2) \\
	&= \|\nabla u\|_2^2 + \la Bu,u\ra_2 
	- \sum_{V\in\mathcal V} (\|\nabla(\vf_Vu)\|_2^2 + \la B(\vf_Vu),\vf_Vu\ra_2) \\
	&= \int \big\{ (|\nabla u|^2+\la Bu,u\ra)(1-\sum_{V\in\mathcal V} \vf_V^2)
	- \sum_{V\in\mathcal V}|\nabla\vf_V|^2|u|^2 \\
	&\hspace{5mm}- 2\sum_{V\in\mathcal V} \la\nabla\vf_V\otimes u,\vf_V\nabla u\ra 
	- \sum_{V\in\mathcal V} \la\sigma_B(\nabla\vf_V)u,\vf_Vu\ra \big\},
\end{align*}
where $\sigma_B$ denotes the principal symbol of $B$.
Since $\sum\vf_V^2=1$, the first integrand on the right vanishes.
The third and fourth integrand equal
\begin{align*}
	2\sum_{V\in\mathcal V} \la\vf_V\nabla\vf_V\otimes u,\nabla u\ra
	= \sum_{V\in\mathcal V} \la(\nabla\vf_V^2)\otimes u,\nabla u\ra
\end{align*}
respectively
\begin{align*}
	\sum_{V\in\mathcal V} \la\vf_V\sigma_B(\nabla\vf_V)u,u\ra
	= \frac12 \sum_{V\in\mathcal V} \la\sigma_B(\nabla\vf_V^2)u,u\ra
\end{align*}
and therefore vanish, again since $\sum\vf_V^2=1$.
In conclusion,
\begin{align}\label{partition}
	\sum_{V\in\mathcal V} (\|\nabla(\vf_Vu)\|_2^2 + \la B(\vf_Vu),\vf_Vu\ra_2)
	= \la Au,u\ra_2 + \sum_{V\in\mathcal V} \int |\nabla\vf_V|^2|u|^2.
\end{align}
For $u\ne0$, we obtain
\begin{align}\label{estray}
\begin{split}
	&\frac{\sum_{V \in \mathcal V} (\|\nabla(\vf_Vu)\|_2^2
	+ \la B(\vf_Vu),\vf_Vu\ra)_2}{\sum_{V \in \mathcal V} \|\vf_Vu\|_2^2} \\
	&\hspace{25mm}= \frac{\la Au,u\ra_2
	+ \sum_{V\in\mathcal V} \int|\nabla\vf_V|^2|u|^2}{\sum_{V\in\mathcal V} \int \vf_V^2|u|^2} \\
	&\hspace{25mm}= \frac{\la Au,u\ra_2 + \sum_{V\in\mathcal V} \int|\nabla\vf_V|^2|u|^2}{\|u\|_2^2} \\
	&\hspace{25mm}\le \frac{\la Au,u\ra_2}{\|u\|_2^2} + \sum_{V\in\mathcal V} \|\nabla\vf_V\|_{\supp u,\infty}^2.
\end{split}
\end{align}
In particular, if $u\ne0$, then there is a $V\in\mathcal V$ with $\vf_Vu\ne0$ such that
\begin{align}\label{estray2}
	\Ray_A(\vf_Vu)
	\le \Ray_A(u) + \sum_{V\in\mathcal V} \|\nabla\vf_V\|_{\supp u,\infty}^2,
\end{align}
where $\Ray$ indicates the Rayleigh quotient with respect to $A$.
Note that we use the Rayleigh quotients of $A$ here in the form
\begin{align*}
	\Ray_A(v) = \frac{\|\nabla v\|_2^2 + \la Bv,v\ra_2}{\|v\|_2^2}
\end{align*}
for $C^{0,1}$-sections of $E$.
This is legitimate since the infimum over them yield the bottom of the spectrum of the closure of $A$,
as is well known.

In the spirit of the above discussion, we also get the following criterion.

\begin{lem}\label{minfty}
If $A$ is not bounded from below on $E$ over $O$,
then there is a sequence $(u_n)$ in $C^\infty_c(O,E)$,
which leaves any compact subset of $O$ eventually,
such that $\|u_n\|_2=1$ and $\Ray_A(u_n)\to-\infty$.
\end{lem}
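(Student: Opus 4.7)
The plan is to show, by contradiction, that for every compact $K\subseteq O$ the Rayleigh quotient $\Ray_A$ is already unbounded from below on $C^\infty_c(O\setminus K,E)$. Granting this, one picks a compact exhaustion $K_1\subseteq K_2\subseteq\cdots$ of $O$ and selects $u_n\in C^\infty_c(O\setminus K_n,E)$ with $\|u_n\|_2=1$ and $\Ray_A(u_n)\le-n$; this sequence leaves every compact subset of $O$ eventually and satisfies $\Ray_A(u_n)\to-\infty$.

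To prove the reduction, fix a compact $K\subseteq O$ and choose a smooth cutoff $\chi\in C^\infty_c(O)$ with $0\le\chi\le1$, $\chi\equiv1$ on $K$, and $\chi$ flat at its maximum so that $\eta:=\sqrt{1-\chi^2}$ is smooth with $\eta\equiv0$ on $K$; set $K':=\supp\chi$. Since the derivation of \eqref{partition} uses only $\sum\vf_V^2\equiv1$, the identical computation applies to the two-element partition $\{\chi,\eta\}$ and yields, for every $w\in C^\infty_c(O,E)$,
\[
\la Aw,w\ra_2=\la A(\chi w),\chi w\ra_2+\la A(\eta w),\eta w\ra_2-\int(|\nabla\chi|^2+|\nabla\eta|^2)|w|^2.
\]
By the construction of $\chi$, the gradients $|\nabla\chi|$ and $|\nabla\eta|$ are uniformly bounded by some $C_0$.

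Now assume, toward a contradiction, that $\Ray_A\ge-M$ on $C^\infty_c(O\setminus K,E)$ for some $M>0$. Since $\eta w$ is such a section, $\la A(\eta w),\eta w\ra_2\ge-M\|\eta w\|_2^2$. For the other term, $\chi w$ is supported in the fixed compact $K'$; writing $A=\Delta+B$ with $B=\sum\sigma_B(X_i)\nabla_{X_i}+V$, the coefficients $\sigma_B$ and $V$ are bounded on $K'$, and a Young-type absorption of the first-order cross term into the nonnegative $\|\nabla(\chi w)\|_2^2$ produces a constant $C_{K'}$ with $\la A(\chi w),\chi w\ra_2\ge-C_{K'}\|\chi w\|_2^2$. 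Substituting into the partition identity gives $\Ray_A(w)\ge-\max(M,C_{K'})-C_0$ for every nonzero $w\in C^\infty_c(O,E)$, contradicting the hypothesis that $A$ is unbounded from below.

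The main subtlety, and what forces the Laplace type hypothesis at this point, is the uniform lower bound $\la A(\chi w),\chi w\ra_2\ge-C_{K'}\|\chi w\|_2^2$ for sections of compact support in a fixed compact set. For a general formally self-adjoint elliptic operator no such a priori bound need hold, so the partition-of-unity trick depends decisively on the Laplace type structure of $A$ in order to convert the first-order piece $B$ into a harmless zeroth-order error on $K'$.
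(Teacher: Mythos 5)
Your proof is correct and follows essentially the same route as the paper: the two-function partition $\{\chi,\eta\}$ with $\chi^2+\eta^2=1$, the identity \eqref{partition}, and the fact that the Laplace type operator is bounded from below on sections supported in a fixed compact set; your contradiction-plus-exhaustion packaging is just a cleaner rendering of the paper's ``modifying the sequence appropriately.'' The only (trivially fixable) nit is that you should take $\chi\equiv1$ on a \emph{neighborhood} of $K$ so that $\eta w$ is genuinely compactly supported in the open set $O\setminus K$.
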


\begin{proof}
Suppose that $A$ is not bounded from below on $E$ over $O$.
Then there is a sequence $(u_n)$ in $C^\infty_c(O,E)$
such that $\|u_n\|_2=1$ and $\Ray_A(u_n)\to-\infty$.
Let $\vf\in C^\infty(O)$ with $0\le\vf\le1$ such that $\psi=(1-\vf^2)^{1/2}$ is smooth
and such that $\supp\vf$ is a compact domain with smooth boundary.
Computing as above, we obtain that
\begin{align*}
	\min\{\Ray_A(\vf u_n),\Ray(\psi u_n)\} \to -\infty.
\end{align*}
Since $A$ is bounded from below on $\supp\vf$ with Dirichlet boundary condition,
we get that $\Ray(\psi u_n)\to -\infty$.
Thus modifying the sequence of $(u_n)$ appropriately,
it leaves any compact subset of $O$ eventually.
\end{proof}

\begin{proof}[Proof of \cref{bottom}]
Let $(u_n)$ be a sequence in $C^\infty_c(O,E)$, which leaves any compact subset of $O$ eventually,
such that $\|u_n\|_2=1$ and $\Ray_A(u_n)\to-\infty$ if $A$ is not bounded from below
or such that $\Ray_A(u_n)\to\lambda_{\ess}(A,O)$ otherwise.
Then
\begin{align*}
	\sum_{V\in\mathcal V} \|\nabla\vf_V\|_{\supp u_n,\infty}^2 \to 0,
\end{align*}
by \cref{pu}.
For each $n$, choose a $V_n\in\mathcal V$ such that $\vf_{V_n}u_n\ne0$
and such that $\Ray_A(\vf_{V_n}u_n)$ satisfies the estimate in \eqref{estray2}.
By passing to a subsequence if necessary, we can assume that $V_n$ does not depend on $n$,
but is a fixed $V\in\mathcal V$.
Then $(\vf_Vu_n)$ is a sequence in $C^\infty_c(O,E)$ with $\vf_Vu_n\ne0$
and $\supp(\vf_Vu_n)\subset\pi_C^{-1}(C\cap V)\cap\supp u_n$,
so that $(\vf_Vu_n)$ leaves any compact subset of $O$ eventually,
and such that $\limsup\Ray_A(\vf_Vu_n)\le\lim\Ray_A(u_n)$.

There are now two cases.
Either $V=V_i$ and belongs to the finite subfamily of $\mathcal V$ covering $C_\ve$.
Then we have the covering $\pi_H^{-1}(H\cap U_i)\to\pi_C^{-1}(C\cap V_i)$,
where $U\in\mathcal U$ lies above $V$
and the group $\Gamma_i$ of covering transformations is finite.
Then we argue as in the first case in \cref{elementary},
substituting $\pi_H^{-1}(H\cap U_i)$ for $X$ and $\pi_C^{-1}(C\cap V_i)$ for $O$.
The other case is that $V=V_p$ for some parabolic point $p$ of $\Gamma$.
Then we have the covering $\pi_H^{-1}(H\cap U_p)\to\pi_C^{-1}(C\cap V_p)$
with  group $\Gamma_p$ of covering transformations.
Since $\Gamma_p$ is finitely generated and virtually nilpotent,
we can argue as in the second case in \cref{elementary},
substituting $\pi_H^{-1}(H\cap U_p)$ for $X$ and $\pi_C^{-1}(C\cap V_p)$ for $O$.
The equality assertion follows from \cref{converse},
where we note that the volume of geometrically finite orbifolds $O$ is infinite
if and only if $\Omega_\Gamma$ is not empty.
\end{proof}

Let now $A$ be of first order, $\lambda\in\R$, and $u\in C_c^\infty(O,E)$.
Then
\begin{align}\label{partition1}
\begin{split}
	\sum_{V\in\mathcal V} |A(\vf_Vu)&-\lambda\vf_Vu|^2
	= \sum_{V\in\mathcal V} |\sigma_A(\nabla\vf_V)u + \vf_V(Au-\lambda u)|^2 \\
	&\le 2\sum_{V\in\mathcal V} |\sigma_A(\nabla\vf_V)u|^2
	+ 2\sum_{V\in\mathcal V} \vf_V^2|Au-\lambda u|^2 \\
	&= 2\sum_{V\in\mathcal V} |\sigma_A(\nabla\vf_V)|^2 |u|^2 + 2|Au-\lambda u|^2 \\
	&\le 2\sum \|\sigma_A\|_\infty\|\nabla\vf_V\|_{\supp u,\infty}^2 |u|^2 + 2|Au-\lambda u|^2.
\end{split}
\end{align}

\begin{proof}[Proof of \cref{sigmaf}]
Let $\lambda$ be in the essential spectrum of $A$
and $(u_n)_{n\in\N}$ be a sequence in $C^\infty_c(O,E)$ as in \eqref{lamspec} and \eqref{lamspece}.
Then by \eqref{partition1}, after passing to a subsequence if necessary,
there is a $V\in\mathcal V$ such that the sequence of $\vf_Vu_n$ also satisfies
the requirements of \eqref{lamspec} and \eqref{lamspece},
except for the smoothness of the sections.
However, the $\vf_Vu_n$ are still in the domain of the closure of $A$,
which is a sufficient replacement for smoothness in \eqref{lamspec} and \eqref{lamspece}.
Thus we arrive at the first assertion, arguing as in the proof of \cref{bottom}.
The second assertion follows immediately from \cref{converse},
observing again that the volume of $O$ is infinite
if and only if $\Omega_\Gamma$ is not empty.
\end{proof}

Assume now again that $A$ is of Laplace type and write $A=\Delta+B$, where $B$ is of first order.
Since we need the sections $\vf_V u$ to be in the domain of the closure of $A$,
we assume now that $\mathcal U$ and $\mathcal V$ are locally finite respectively finite open coverings
of $X$ and $O$ and that $(\vf_U)_{U\in\mathcal U}$ and $(\vf_V)_{V\in\mathcal V}$
are associated families of smooth functions as in \cref{pus}.
Then
\begin{align*}
	\sum_{V\in\mathcal V} &|(A-\lambda)(\vf_Vu)|^2
	= \sum_{V\in\mathcal V} |\Delta(\vf_Vu) + B(\vf_Vu) - \lambda\vf_Vu|^2 \\
	&= \sum_{V\in\mathcal V} |\vf_V(\Delta u + Bu - \lambda u) \\
	&\hspace{15mm} -2 \tr(\nabla\vf_V\otimes\nabla u) + (\Delta\vf_V)u + \sigma_B(\nabla\vf_V)u|^2 \\
	&\le 4 \sum_{V\in\mathcal V} \{\vf_V^2|\Delta u + Bu - \lambda u|^2
	+ 4 \|\nabla\vf_V\|_{\supp u,\infty}^2|\nabla u|^2 \} \\
	&\hspace{15mm}+ 4 \sum_{V\in\mathcal V}(\|\Delta\vf_V\|_{\supp u,\infty}^2
	+ \|\sigma_B\|_{\infty}^2\|\nabla\vf_V\|_{\supp u,\infty}^2)|u|^2.
\end{align*}
Using that $\sum\vf_V^2=1$, we obtain
\begin{align}\label{partition2}
\begin{split}
	\sum_{V\in\mathcal V} |(A-\lambda)&(\vf_Vu)|^2
	\le 4 |(A-\lambda)u|^2 + 16 \sum_{V\in\mathcal V} \|\nabla\vf_V\|_{\supp u,\infty}^2|\nabla u|^2 \\
	&+ 4 \sum_{V\in\mathcal V} (\|\Delta\vf_V\|_{\supp u,\infty}^2
	+ \|\sigma_B\|_{\infty}^2\|\nabla\vf_V\|_{\supp u,\infty}^2)|u|^2,
\end{split}
\end{align}
where $\sigma_B$ denotes the principal symbol of $B$.

\begin{proof}[Proof of \cref{sigmal}]
With respect to a local orthonormal frame $(X_i)$ of $O$,
\begin{align*}
	|\nabla u|^2 + \la Bu,u\ra
	&= |\nabla u|^2 + \sum \la \sigma_B(X_i)\nabla_{X_i}u,u\ra + \la Vu,u\ra \\
	&\ge |\nabla u|^2 - |\sigma_B||\nabla u||u| + \la Vu,u\ra \\
	&\ge |\nabla u|^2 - \alpha\|\sigma_B\|_\infty|\nabla u|^2
	- (\|\sigma_B\|_\infty/\alpha)|u|^2 + \la Vu,u\ra \\
	&\ge - (c_0+ \|\sigma_B\|_\infty/\alpha)|u|^2,
\end{align*}
where we choose $\alpha\le1/\|\sigma_B\|_\infty$ and where $-c_0$ is a lower bound for $V$.
Therefore
\begin{align*}
	\la Au,u\ra_2 \ge - (c_0+ \|\sigma_B\|_\infty/\alpha)\|u\|_2^2,
\end{align*}
and hence $A$ is bounded from below.
Thus $A$ is also essentially self-adjoint, by \cite[Theorem A.24]{BallmannPolymerakis20a}.

The proof of the assertions about the spectrum is similar to that of \cref{sigmaf};
however, the control of the terms on the right in \eqref{partition2} is different.
We let $(u_n)$ be a sequence in $C^\infty_c(O,E)$ for $\lambda\in\spec_{\ess}(A,O)$
as in \eqref{lamspec} and \eqref{lamspece}.
Then $\la Au_n,u_n\ra_2\le(|\lambda|+1)\|u_n\|_2^2$ for all sufficiently large $n$
and we obtain, for any such $u=u_n$, 
\begin{align*}
	\|\nabla u\|_2^2
	&= \la Au,u\ra_2 - \la Bu,u\ra_2 \\
	&\le (|\lambda|+1)\|u\|_2^2	 - \la Bu,u\ra_2 \\
	&= (|\lambda|+1)\|u\|_2^2 - \sum \la B_i\nabla_{X_i}u,u\ra_2 - \la Vu,u\ra_2 \\
	&\le (|\lambda|+1+c_0)\|u\|_2^2 + c_1\|\nabla u\|_2\|u\|_2 \\
	&\le  (|\lambda|+1+c_0+c_1/\alpha)\|u\|_2^2 + \alpha c_1\|\nabla u\|_2^2,
\end{align*}
where $-c_0$ is a lower bound for $V$ and $c_1=\|\sigma_B\|_\infty$.
By choosing an $\alpha<1/c_1$, we get an estimate for $\|\nabla u\|_2^2$.
The remaining part of the proof is now analogous to that of \cref{sigmaf}.
\end{proof}

%%%%%%%%%%%%%%%%%%%%%%%%%%%%%%%%%%%%%%%%%%%%%%
\section{Estimates for derivatives of Jacobi fields}
\label{secjac2}
%%%%%%%%%%%%%%%%%%%%%%%%%%%%%%%%%%%%%%%%%%%%%%

This section is devoted to the proof of the second estimate in \cref{pus}.
Let $C$ be a strictly convex  domain in $X$ with smooth boundary.
Then the distance $f$ to $C$ and the orthogonal projection $\pi$ onto $C$
are smooth on $X\setminus C$.
Let $\psi\colon C\to\R$ be a smooth function and set $\vf=\psi\circ\pi$.
Our aim is to estimate $\Delta\vf$.
To that end, estimates of derivatives of Jacobi fields in the variational direction will be crucial.

Let $c=c_s(t)=c(s,t)$ be a geodesic variation of a geodesic $c_0$ by geodesics $c_s$,
defined on $(-\ve,\ve)\times[0,r]$, and assume that all geodesics are of unit speed.
Then we have the velocity fields $c'=\partial c/\partial t$ and Jacobi fields $J=\partial c/\partial s$,
and our aim is to estimate the component $K^\perp$ perpendicular to $c$ of
\begin{align*}
	K = \frac{\nabla J}{\partial s} = \frac{\nabla}{\partial s}\frac{\partial c}{\partial s}.
\end{align*}
To that end, we assume that $J$ is perpendicular to $c'$.
Now
\begin{align}\label{j}
\begin{split}
	\frac{\nabla}{\partial s} R(J,c')c'
	&= \nabla R(J,J,c')c' + R(K,c')c' + R(J,J')c' + R(J,c')J'.
\end{split}
\end{align}
Recalling the Jacobi equation $J''+R(J,c')c'=0$, we compute
\begin{align}\label{kr}
\begin{split}
	- \frac{\nabla}{\partial s} R(J,c')c'
	&= \frac{\nabla}{\partial s}J'' \\
	&= \frac{\nabla}{\partial s}\frac{\nabla}{\partial t}\frac{\nabla}{\partial t}\frac{\partial c}{\partial s} \\
	&= \frac{\nabla}{\partial t}\frac{\nabla}{\partial s}\frac{\nabla}{\partial t}\frac{\partial c}{\partial s}
	+ R(\frac{\partial c}{\partial s},\frac{\partial c}{\partial t})\frac{\nabla}{\partial t}\frac{\partial c}{\partial s} \\
	&= \frac{\nabla}{\partial t}\frac{\nabla}{\partial t}\frac{\nabla}{\partial s}\frac{\partial c}{\partial s}
	+ \frac{\nabla}{\partial t}R(\frac{\partial c}{\partial s},\frac{\partial c}{\partial t})\frac{\partial c}{\partial s}
	+ R(\frac{\partial c}{\partial s},\frac{\partial c}{\partial t})\frac{\nabla}{\partial t}\frac{\partial c}{\partial s} \\
	&= K'' + \nabla R(c',J,c')J + R(J',c')J + 2R(J,c')J',  
\end{split}
\end{align}
where we use $c''=0$.
With \eqref{j}, \eqref{kr}, and the Bianchi identity for $R$,
we obtain the perturbed Jacobi equation
\begin{align}\label{jk}
	K'' + R(K,c')c'
	= 4R(c',J)J' + \nabla R(c',c',J)J - \nabla R(J,J,c')c'.
\end{align}
Note that the last two terms on the right vanish in the case of hyperbolic spaces
and their quotients.

Let $x\in X\setminus C$ and $r=f(x)>0$.
Let $c_2,\dots,c_m$ be unit speed geodesics in $\{f=r\}$ through $x$,
which meet orthogonally in $x$.
Using that $\psi$ does not depend on $r$ and that $c_i''$ is perpendicular to $\{f=r\}$ for $i\ge2$,
we get
\begin{align}\label{delta}
\begin{split}
	&\Delta\psi(x)
	= -\sum_{2\le i\le m} (\vf\circ\pi\circ c_i)''(0)
	= -\sum_{2\le i\le m} (d\vf\circ(\pi\circ c_i)')'(0) \\
	&= -\sum_{2\le i\le m} \big\{ (\nabla^2\vf((\pi\circ c_i)'(0),(\pi\circ c_i)'(0))
	+ d\vf((\pi\circ c_i)''(0))\big\},
\end{split}
\end{align}
where the prime indicates derivatives with respect to the parameter $s$ of the $c_i$.
Going back to the situation in \eqref{kr}, if $i\ge2$ is given
and $c_s=c_{i,s}$ is the unit speed geodesic from $\pi(c_i(s))$ to $c_i(s)$, then
\begin{align*}
	(\pi\circ c_i)'(0) = J(0,0)
	\hspace{3mm}\text{and}\hspace{3mm}
	(\pi\circ c_i)''(0) = K(0,0).
\end{align*}
Since $\nabla^2\vf$ is uniformly bounded on compact subsets of $C$
and since $|J(0,0)|$ decays exponentially as $r\to\infty$,
the first term on the right in \eqref{delta} is under good control.
It remains to show that $d\vf(K(0,0))\to0$ in a controlled way as $r\to\infty$.
The following approach was motivated by \cite[§7]{HopfE40}.

For convenience, let now $J=J(0,.)$ and $K=K(0,.)$.
Let $L$ be a Jacobi field along $c=c_0$ perpendicular to $c$ and such that $L(r)=0$.
Then
\begin{align}\label{fl}
	(\la K',L\ra - \la K,L'\ra)' = \la F,L\ra
\end{align}
where $F$ denotes the right hand side of \eqref{jk}.
Now $K^\perp(r)=0$ by the above choice of variation and $L(r)=0$, hence
\begin{align}\label{fl2}
	\la K,L'\ra(0) -  \la K',L\ra(0) = \int_0^r \la F,L\ra\dt.
\end{align}
Comparison with constant curvature $-a^2$ gives
\begin{align}\label{le}
\begin{split}
	\frac{|L|'}{|L|}(t) \le& - a\coth(a(r-t))   \\
	\frac{|L(t)|}{|L(0)|} \le& \frac{\sinh(a(r-t))}{\sinh(ar)}.
\end{split}
\end{align}
The second fundamental form $S_0$ of $C=C_0$ is positive definite
and, on compact parts of $C_0$, bounded between constants $0<\kappa_-\le\kappa_+$.
By \Cref{curvest2}, the second fundamental form $S_t$ of $\{f=t\}$ satisfies
\begin{align*}
	\frac{j_-'}{j_-} \le S_t \le \frac{j_+'}{j_+},
\end{align*}
where
\begin{align}\label{jmp}
\begin{split}
	j_- = \cosh(at)+\frac{\kappa_-}{a}\sinh(at)
	\hspace{3mm}\text{and}\hspace{3mm}
	j_+ = \cosh(bt)+\frac{\kappa_+}{b}\sinh(bt).
\end{split}
\end{align}
Since $J'(t)=S_tJ(t)$, we obtain
\begin{align}\label{jprnorm}
	\frac{j_-'}{j_-}
	\le \frac{\la J',J\ra}{\la J,J\ra}
	= \frac{|J|'}{|J|} \le \frac{|J'|}{|J|}
	\le \frac{j_+'}{j_+}.
\end{align}
Therefore, since $|J(r)|=1$ by our setup,
\begin{align}\label{je}
	\frac{j_-}{j_-(r)}
	\ge |J|
	\ge \frac{j_+}{j_+(r)}.
\end{align}
In particular,
\begin{align}\label{jest0}
	|J(0)| \le \frac1{\cosh(ar)+\kappa_-\sinh(ar)/a}.
\end{align}
To estimate the integral in \eqref{fl2}, assuming that $\|\nabla R\|_\infty<\infty$,
we invoke \eqref{jk}, \eqref{le}, \eqref{jprnorm}, and \eqref{je} to get
\begin{align}\label{estint}
\begin{split}
	\int_0^r |F||L|\dt
	&\le C \int_0^r \frac{j_-^2}{j_-(r)^2}\frac{\sinh(a(r-t))}{\sinh(ar)}\dt \\
	&= C \int_0^r \frac{(a\cosh(at)+\kappa_-\sinh(at))^2}{(a\cosh(ar)+\kappa_-\sinh(ar))^2}
	\frac{\sinh(a(r-t))}{\sinh(ar)}\dt \\
	&\le C' \int_0^r \frac{e^{2at}}{e^{2ar}}\frac{e^{a(r-t)}}{e^{ar}}\dt
	= C' \frac{1}{e^{2ar}} \int_0^r e^{at}\dt \le C'' e^{-ar}.
\end{split}
\end{align}
Since $L$ is perpendicular to $c$, only the component $K^\perp$ of $K$ perpendicular to $c$
comes into play in \eqref{fl} and \eqref{fl2}, where we note that $(K')^\perp=(K^\perp)'$.
We compute
\begin{align}\label{kprime2}
	\begin{split}
		(K^\perp)^\prime
		&= \left( \frac{\nabla}{\partial t}\frac{\nabla}{\partial s}\frac{\partial c}{\partial s} \right)^\perp
		= \left( \frac{\nabla}{\partial s}\frac{\nabla}{\partial t}\frac{\partial c}{\partial s}
		+ R(c^\prime,J)J \right)^\perp \\
		&= \left( \frac{\nabla}{\partial s}\frac{\nabla}{\partial s}\frac{\partial c}{\partial t} \right)^\perp
		+ \left(R(c^\prime,J)J \right)^\perp \\
		&= (\nabla_{J} \nabla_J c^\prime)^\perp + \left(R(c^\prime,J)J \right)^\perp \\
		&= (\nabla_{J} S_t J)^\perp + \left(R(c^\prime,J)J \right)^\perp \\
		&= \nabla^t_J(S_tJ) + \left(R(c^\prime,J)J \right)^\perp \\
		&= (\nabla^t S_t)(J,J) + S_tK^\perp + \left(R(c^\prime,J)J \right)^\perp,
	\end{split}
\end{align}
where $S_t$ is the second fundamental form of $\{f=t\}$ with respect to $c'(t)$
and $\nabla^t$ stands for the Levi-Civita connection of $\{f=t\}$.

To control the left hand side of \eqref{fl2}, we will need \eqref{kprime2} for $t=0$.
Since we only consider a compact part of $C$, $\nabla^0S_0$ is bounded uniformly over that part.
Hence, by \eqref{jest0},
\begin{align}\label{estrem}
	|(\nabla^0S_0)(J,J) + \left(R(c^\prime,J)J \right)^\perp|
	\le \frac{C}{(\cosh(ar)+\kappa_-\sinh(ar)/a)^2}
\end{align}
Hence, by \eqref{estint} and \eqref{estrem},
\begin{align*}
	|\la K^\perp,L'\ra(0) - \la S_0K^\perp,L\ra(0)| \le C e^{-ar}
\end{align*}
On the other hand, $L'(0)=B(0)L(0)$, where $B$ is the second fundamental form of the sphere of radius $r$
about $c(r)$ at $c(0)$ with respect to the inner normal $c'(0)$.
Now $B<0$ and $S_0>0$ and
\begin{align*}
	\la K^\perp,L'\ra(0) - \la S_0K^\perp,L\ra(0)
	= \la (B-S_0)K^\perp,L\ra(0)
\end{align*}
for any Jacobi field $L$ as above.
Since $B-S_0<-S_0<0$, we conclude that $|K^\perp(0)|\le Ce^{-ar}$.

%%%%%%%%%%%%%%%%%%%%%%%%%%%%%%%%%%%%%%%%%%%%%
\appendix
%%%%%%%%%%%%%%%%%%%%%%%%%%%%%%%%%%%%%%%%%%%%%

%%%%%%%%%%%%%%%%%%%%%%%%%%%%%%%%%%%%%%%%%%%%%%
\section{Smoothing convex sets}
\label{secmoll}
%%%%%%%%%%%%%%%%%%%%%%%%%%%%%%%%%%%%%%%%%%%%%%

The purpose of this appendix is the proof of the following version
of \cite[Proposition 6]{ParkkonenPaulin12} of Parkkonen and Paulin.
The setting is a closed and strictly convex $C^{1,1}$-domain $C\subseteq X$
such that the second fundamental form $S$ of $\partial C$
satisfies $0<\alpha\le S\le\beta$ almost everywhere.

\begin{thm}\label{papa}
Assume that $\nabla R$ is uniformly bounded in a tubular neighborhood $U_\rho(C)$ about $C$,
for some $\rho>0$, and let $0<\eta<\alpha\wedge\rho$.
Then there is a closed and strictly convex domain $C\subseteq C'\subseteq U_\eta(C)$
whose boundary is smooth with second fundamental form $\alpha-\eta\le S'\le\beta+\eta$.
Moreover, $C'$ is invariant under any isometry of $X$ leaving $C$ invariant.
\end{thm}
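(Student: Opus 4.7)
The plan is to realize $C'$ as a sublevel set of a smooth, isometry-equivariant approximation of the distance function $f=d(\,\cdot\,,C)$. Convexity of $C$ makes $f$ convex and, as in \cref{subconv}, $C^{1,1}$ on $X\setminus C$ with $|\nabla f|=1$ a.e.; where the second derivative exists, it satisfies the Riccati equation \eqref{riccati} with initial condition $(0,S)$. By \cref{curvest2} the shape operators of the level sets $\{f=t\}$ are pinched in $[\alpha-o(1),\beta+o(1)]$ as $t\to 0$, uniformly along $\partial C$, and this uniform pinching is exactly what we want to transfer to a smooth level set.

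I would then mollify $f$ by integration on tangent spaces. Pick a nonnegative, radially symmetric $\phi\in C^\infty_c(\R^m)$ with $\int\phi=1$ and $\supp\phi\subseteq B(0,1)$, and for small $\delta\in(0,\rho)$ set
\begin{align*}
	\tilde f(x) = \int_{T_xX} f(\exp_x(v))\,\delta^{-m}\phi(v/\delta)\,dv.
\end{align*}
Since $\phi$ is radial and $\exp_x$ is intertwined by isometries, $\tilde f\circ g=\tilde f$ for every isometry $g$ of $X$ preserving $C$, and standard arguments make $\tilde f$ smooth on $U_{\rho-\delta}(C)$. Differentiating under the integral and commuting covariant derivatives with $\exp_x$ by means of \cref{lemtec} and \cref{lemtec2}, and using $\|\nabla R\|_\infty<\infty$ on $U_\rho(C)$ to control curvature error terms, I would prove uniform estimates $|\tilde f-f|,|\nabla\tilde f-\nabla f|\le\kappa\delta$ and, for unit $u$ tangent to the level sets of $f$,
\begin{align*}
	\alpha-\kappa\delta \;\le\; \nabla^2\tilde f(u,u) \;\le\; \beta+\kappa\delta,
\end{align*}
on a tubular neighborhood of $\partial C$ of width uniform in the base point.

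Since $|\nabla\tilde f|=1+O(\delta)$, $\tilde f$ has no critical values in that collar, and for $\delta$ small enough in terms of $\eta,\alpha,\beta,a,b$ I would take
\begin{align*}
	C'=\{\tilde f\le c\}, \qquad c=\sup_{\partial C}\tilde f + \delta.
\end{align*}
Then $C\subseteq C'\subseteq U_\eta(C)$, $\partial C'$ is smooth, and its second fundamental form is $\nabla^2\tilde f/|\nabla\tilde f|$ restricted to tangent directions, so lies in $[\alpha-\eta,\beta+\eta]$. Invariance of $C'$ under any isometry preserving $C$ is inherited from that of $\tilde f$. The main obstacle is the uniform $C^2$-estimate on $\tilde f$: differentiating twice under the integral produces, beyond an average of $\nabla^2 f$ over a small ball, terms involving Jacobi fields along $t\mapsto\exp_x(tv)$ and their variational derivatives in the base point, which couple $R$ and $\nabla R$. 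This is where the hypothesis $\|\nabla R\|_\infty<\infty$ on $U_\rho(C)$ enters, and where the strengthening of the Parkkonen-Paulin argument to a uniform statement (needed for non-compact $C$, as in \cref{subcut}) must be performed carefully.
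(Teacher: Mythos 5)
Your proposal is correct and follows essentially the same route as the paper: mollify $f=d(\cdot,C)$ by averaging $f\circ\exp_x$ against a radial bump of radius $\kappa$, get isometry invariance from the frame/radial symmetry of the construction, prove uniform $C^0$-, $C^1$-, and $C^2$-closeness of $f_\kappa$ to $f$ on an annular collar $\{\delta/3\le f\le 2\delta/3\}$ by Jacobi-field comparisons where the variational derivative of the Jacobi field (governed by the perturbed Jacobi equation \eqref{jk}) is exactly where $\|\nabla R\|_\infty<\infty$ is used, and finally take $C'$ as a sublevel set of $f_\kappa$ at a regular value. The only minor point to tighten is that the mollification radius must be taken strictly smaller than the distance from the chosen level set to $\partial C$ (the paper works at $\delta/3\le f\le 2\delta/3$ with $\kappa<\delta/3$), so that the averaging balls stay inside $\{0<f\le\delta\}$ where the estimates on $\nabla^2 f$ are available.
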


Note that the statement of \cite[Proposition 6]{ParkkonenPaulin12} is more general,
since there is no assumption on $\nabla R$.
But it seems that the arguments behind the last eight lines on \cite[Page 630]{ParkkonenPaulin12}
require the compactness of $C$.

As in \cite{ParkkonenPaulin12}, our proof relies on smoothing the distance function to $C$.
We will need uniform estimates on the first and second derivative of the smoothed distance function.
Our arguments also work in more general situations.
However, in our setting, the arguments are less cumbersome.

Let $F\colon X\times\R^m\to TX$ be a smooth orthonormal frame of $X$ and write
\begin{align}\label{gexp}
	g_v(x) = g(x,v) = \exp F(x,v).
\end{align}
Note that $g_0$ is the identity of $X$ and that, therefore,
$g_v$ converges locally uniformly in the $C^\infty$-topology to the identity of $X$ as $|v|\to0$.
This is the point of the arguments in \cite{ParkkonenPaulin12}, which we alluded to above.

Let $\vf\colon\R\to\R$ be a non-negative smooth function which is positive and constant
in a neighborhood of $0$ such that $\vf(r)=0$ for $r\ge1$ and such that
\begin{align*}
	\int_{\R^m}\vf(|v|)\dv = 1.
\end{align*}
For $\kappa>0$, set 
\begin{align*}
	\vf_\kappa = \vf_\kappa(x) = \kappa^{-m}\vf(x/\kappa).
\end{align*}
Let $f$ be a locally integrable function on $X$.
Then
\begin{align}\label{fk2}
	f_\kappa(x) = \int_{\R^m} \vf_\kappa(|v|)f(g(x,v)) \dv
\end{align}
does not depend on the choice of the frame $F$.
By the usual rule of differentiation under the integral sign, we obtain that $f_\kappa$ is smooth.

\begin{lem}\label{invari}
For any isometry $\gamma$ of $X$ and $\kappa>0$,
$(f\circ\gamma)_\kappa=f_\kappa\circ\gamma$.
In particular, if $f$ is $\gamma$-invariant, then also $f_\kappa$.
\end{lem}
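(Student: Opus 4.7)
The plan is to reduce the identity to the already-noted fact that the mollification $f_\kappa$ does not depend on the choice of orthonormal frame $F$ used in the definition~\eqref{fk2}. The bridge between $f \circ \gamma$ and $f_\kappa \circ \gamma$ is supplied by the naturality of the exponential map under isometries.

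First, starting from the definition, I would write
\begin{align*}
(f\circ\gamma)_\kappa(x) = \int_{\R^m} \vf_\kappa(|v|)\, f\bigl(\gamma(g(x,v))\bigr)\, \dv
= \int_{\R^m} \vf_\kappa(|v|)\, f\bigl(\gamma\exp F(x,v)\bigr)\, \dv.
\end{align*}
Since $\gamma$ is an isometry, $\gamma \circ \exp_x = \exp_{\gamma x} \circ \gamma_{*x}$ as maps on $T_xX$, so
\begin{align*}
\gamma(g(x,v)) = \exp_{\gamma x}\bigl(\gamma_{*x} F(x,v)\bigr).
\end{align*}

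Next, I would introduce the pushed-forward frame $F'$ defined by $F'(y,v) = \gamma_{*,\gamma^{-1}y}\, F(\gamma^{-1}y,v)$. Because $\gamma_*$ is a fiberwise linear isometry on $TX$, $F'$ is again a smooth orthonormal frame of $X$. Setting $g'(y,v) = \exp F'(y,v)$, the preceding computation rewrites as
\begin{align*}
\gamma(g(x,v)) = g'(\gamma x, v),
\end{align*}
and hence
\begin{align*}
(f\circ\gamma)_\kappa(x)
= \int_{\R^m} \vf_\kappa(|v|)\, f\bigl(g'(\gamma x, v)\bigr)\, \dv.
\end{align*}

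The right-hand side is precisely the mollification of $f$ at $\gamma x$ computed using the frame $F'$ instead of $F$. Invoking the frame-independence of $f_\kappa$ remarked upon just before the lemma, this quantity equals $f_\kappa(\gamma x)$, establishing the first assertion. The second assertion follows at once: if $f\circ\gamma = f$, then $f_\kappa = (f\circ\gamma)_\kappa = f_\kappa \circ \gamma$. The only real subtlety is ensuring that $F'$ is genuinely a valid frame to plug into the definition — i.e.\ smooth and orthonormal — which is immediate from the isometric and smooth character of $\gamma$, so there is no substantive obstacle.
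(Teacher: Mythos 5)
Your proof is correct and follows essentially the same route as the paper's: your pushed-forward frame $F'(y,v)=\gamma_{*,\gamma^{-1}y}F(\gamma^{-1}y,v)$ is precisely the paper's $\gamma^*F$, and both arguments reduce the claim to the frame-independence of $f_\kappa$ via the naturality of the exponential map under isometries.
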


\begin{proof}
For a given choice $F$ of frame, define a new frame $\gamma^*F$ by
\begin{align*}
	\gamma^*F(x,v) = \gamma_*F(\gamma^{-1}x,v).
\end{align*}
Then $\gamma^*F$ is also a frame, and we get
\begin{align*}
	(f\circ\gamma)_\kappa(x)
	&= \int_{\R^m} \vf_\kappa(|v|)f(\gamma\exp F(x,v)) \dv \\
	&= \int_{\R^m} \vf_\kappa(|v|)f(\exp\gamma_*F(x,v)) \dv \\
	&= \int_{\R^m} \vf_\kappa(|v|)f(\exp\gamma^*F(\gamma x,v)) \dv
	= f_\kappa(\gamma x),
\end{align*}
where we use that $f_\kappa$ does not depend on the choice of frame.
\end{proof}

Suppose now that $f$ is $C^{0,1}$.
Then $f$ is differentiable almost everywhere
and the norm of its derivative is locally integrable.
Differentiation under the integral sign gives
\begin{align}\label{dfk}
	\nabla f_\kappa|_x(w)
	= \int_{\R^m} \vf_\kappa(|v|)\nabla(f\circ g_v)|_x(w)\dv
\end{align}
for all $x\in X$ and $w\in T_xX$, where $g_v(x)=g(x,v)$.
More generally, we have

\begin{lem}\label{d2fk}
For any $k\ge1$, if $f$ is $C^{k-1,1}$, then
\begin{align*}
	\nabla^kf_\kappa|_x(w_1,\dots,w_k)
	= \int_{\R^m} \vf_\kappa(|v|)\nabla^k(f\circ g_v)|_x(w_1,\dots,w_k)\dv
\end{align*}
for all $x\in X$ and $w_1,\dots,w_k\in T_xX$.
\end{lem}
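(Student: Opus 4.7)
The plan is to proceed by induction on $k$, with the base case $k=1$ being the already established formula \eqref{dfk}.

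For the inductive step, suppose the formula is known for $k-1$, and let $f$ be $C^{k-1,1}$. Fix $x\in X$ and $w_1,\dots,w_k\in T_xX$. Let $c$ be a smooth curve with $c(0)=x$ and $\dot c(0)=w_k$, and extend $w_1,\dots,w_{k-1}$ to parallel vector fields along $c$. Since $f$ is in particular $C^{k-2,1}$, the inductive hypothesis applied at $c(h)$ and at $x$ yields
\begin{align*}
\frac{\nabla^{k-1}f_\kappa|_{c(h)}-\nabla^{k-1}f_\kappa|_x}{h}(w_1,\dots,w_{k-1})
= \int_{\R^m} \vf_\kappa(|v|)\,\frac{\nabla^{k-1}(f\circ g_v)|_{c(h)}-\nabla^{k-1}(f\circ g_v)|_x}{h}(w_1,\dots,w_{k-1})\,\dv .
\end{align*}
As $h\to0$ the left side tends to $\nabla^kf_\kappa|_x(w_1,\dots,w_k)$, because $f_\kappa$ is smooth. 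The aim is to pass the limit through the integral on the right, obtaining the claimed formula.

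The justification is Lebesgue's dominated convergence theorem. The key observation is that $f\in C^{k-1,1}$ forces $\nabla^{k-1}(f\circ g_v)$ to be locally Lipschitz in $x$, with Lipschitz constant bounded uniformly for $v$ in the compact set $\supp\vf_\kappa$: indeed, $g_v(x)=\exp F(x,v)$ depends smoothly on $(x,v)$, so all of its $x$-derivatives of order $\le k$ are bounded on compact sets uniformly in $v\in\supp\vf_\kappa$, and combining this with the local Lipschitz bound on $\nabla^{k-1}f$ (via the chain/Faà di Bruno rule) yields a uniform local Lipschitz bound for $\nabla^{k-1}(f\circ g_v)$. Hence the integrand in the displayed formula is dominated by a constant multiple of $\vf_\kappa(|v|)$, which is integrable.

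The delicate point is that $\nabla^k(f\circ g_v)|_x(w_1,\dots,w_k)$ is a priori defined only for almost every $x$ (given $v$), since $\nabla^{k-1}f$ is merely Lipschitz. The resolution is that, for each fixed $x$, the difference quotient converges to $\nabla^k(f\circ g_v)|_x(w_1,\dots,w_k)$ for almost every $v\in\supp\vf_\kappa$: by Rademacher's theorem the exceptional set $N_v$ on which $\nabla^{k-1}(f\circ g_v)$ fails to be differentiable is null in $X$, and the set $\{(x,v):x\in N_v\}$ is null in $X\times\R^m$ by Fubini, so slicing in $v$ gives the claim for a.e.\ $v$ at every $x$ outside a null set; but since we care about identifying $\nabla^kf_\kappa|_x$, which is continuous, and since the right-hand side also depends continuously on $x$ by dominated convergence, equality a.e.\ upgrades to equality everywhere. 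An alternative, cleaner route is to first mollify $f$ to a smooth $f^\ve$ in local coordinates, apply the formula trivially by iterating \eqref{dfk} for smooth functions, and then pass to $\ve\to0$ using the uniform local $C^{k-1,1}$ bounds on $f^\ve$; this last route sidesteps the a.e.\ issue and is the main technical obstacle to handle carefully.
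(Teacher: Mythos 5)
Your route (induction on $k$, difference quotients, dominated convergence) is genuinely different from the paper's, which differentiates under the integral sign directly after unwinding the covariant derivative via the Leibniz rule with a smooth extension $W_2$ of $w_2$; the paper writes this out only for $k=2$ and leaves the general case implicit. Both routes face the same measure-theoretic obstruction, namely that $\nabla^k(f\circ g_v)|_x$ is defined a priori only for almost every $x$ for a given $v$, and you are right to flag it explicitly — the paper glosses over it.

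However, the way you dispose of the obstruction has a gap. After the Rademacher--Fubini argument you obtain the identity only for almost every $x$, and then assert that ``the right-hand side also depends continuously on $x$ by dominated convergence''. This does not follow: the integrand $\nabla^k(f\circ g_v)|_x$ need not converge pointwise in $v$ as $x$ varies, because the top derivative of a $C^{k-1,1}$ function, where it exists, need not be continuous. Continuity of the integral in $x$ is in fact true, but establishing it requires a change of variables $y=g_v(x)$ that rewrites the expression as an integral of the $L^\infty_{\loc}$ density $\nabla^kf$ against a continuous, compactly supported kernel — not mere dominated convergence.

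A cleaner fix also makes the Fubini step unnecessary: for \emph{every} fixed $x$, the map $v\mapsto g_v(x)=\exp_x F(x,v)$ is a global diffeomorphism $\R^m\to X$ (a linear isomorphism onto $T_xX$ followed by the exponential of the Hadamard manifold $X$). By the chain rule, $\nabla^{k-1}(f\circ g_v)$ fails to be differentiable at $x$ precisely when $\nabla^{k-1}f$ fails to be differentiable at $g_v(x)$; the latter set is Lebesgue-null in $X$, so its preimage under the diffeomorphism is null in $\R^m$. Thus the pointwise convergence of your difference quotients holds for a.e.\ $v$ for every $x$, and dominated convergence gives the identity at every $x$ directly, with no a.e.-to-everywhere upgrade. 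A minor notational remark: the paper's convention places the differentiation direction in the first slot of $\nabla^kf_\kappa$, so you should take $\dot c(0)=w_1$ and evaluate the $(k-1)$-st derivative on the parallel transports of $w_2,\dots,w_k$, rather than differentiating along $w_k$.
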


\begin{proof}
For convenience, we write out the formulas only in the case $k=2$.
Choose a smooth vector field $W_2$ in a neighborhood of $x$ such that $W_2(x)=w_2$.
Then we have
\begin{align*}
	\nabla^2f_\kappa|_x&(w_1,w_2)
	= w_1(\nabla f_\kappa (W_2)) - \nabla f_\kappa|_x(\nabla_{w_1}W_2) \\
	&= \int_{\R^m} \vf_\kappa(|v|)\{w_1(\nabla(f\circ g_v)(W_2)) - \nabla(f\circ g_v)|_x(\nabla_{w_1}W_2)\} \dv \\
	&= \int_{\R^m} \vf_\kappa(|v|)\nabla^2(f\circ g_v)|_x(w_1,w_2)\dv,
\end{align*}
where we differentiate under the integral sign for the second equality.
\end{proof}

We assume from now on that $f$ is $C^{1,1}$.
Our aim is to get estimates on $f_\kappa$, $\nabla f_\kappa$, and $\nabla^2f_\kappa$.
Clearly, for any $x\in X$ such that $|\nabla f|\le\ell$ on $B(x,r)$,
we have $|f(g_v(x))-f(x)|\le\ell|v|$ for all $0\le|v|\le\kappa<r$ and hence
\begin{align}\label{fkf0}
	|f_\kappa(x) - f(x)| \le \ell\kappa
\end{align}
for all $0<\kappa<r$.

To estimate $\nabla f_\kappa$ at $x\in X$, recall first that $f_\kappa$ does not depend on the choice of frame.
We choose $F$ to be parallel along geodesics through $x$,
that is, we choose a frame at $x$ and extend it via parallel translation along geodesics through $x$.
The computations in the following discussion are based on that choice.
In addition, we assume that $|\nabla f|\le\ell$ and that $|\nabla^2f|\le\beta$ on $B(x,r)$.
Then
\begin{align}\label{df}
	|\nabla f|_x(w) - \nabla f|_{g(x,v)}(w')| \le \beta|v||w| 
\end{align}
for almost all $0<|v|<r$ and all $w\in T_xX$,
where $w'$ denotes the parallel translate of $w$ along the geodesic $c_v$
with initial velocity $v$
and where we note that $g(x,v)=c_v(1)$.
On the other hand, let $c_w$ be the geodesic through $x$ with initial velocity $w$.
Then
\begin{align}\label{varic}
	c = c(s,t) = g(c_w(s),tv) = \exp F(c_w(s),tv)
\end{align}
is a variation of the geodesic $c_v$, where $s$ is the variational parameter.
We write $c_s(t)=c(s,t)$ and consider the Jacobi fields $J_s=\partial c_s/\partial s$
along the geodesics $c_s$.
By the choice of $F$, we have
\begin{align}\label{icj}
	J_s(0) = c_w'(s)
	\hspace{3mm}\text{and}\hspace{3mm}
	J_s'(0)=0.
\end{align}
For any $v\in\R^m$ and $w\in T_xX$,
\begin{align}\label{dgv}
	g_{v*x}(w) = (\partial c/\partial s)(0,1) = J_0(1)
\end{align}
and hence, by the chain rule,
\begin{align}\label{dfgv}
	\nabla(f\circ g_v)|_x(w) = \nabla f|_{g(x,v)}(J_0(1)).
\end{align}
We will use here and below that
\begin{align}\label{estj}
	|J_0(1)-w'| \le (\cosh(b|v|)-1)|w|,
\end{align}
where $w'$ denotes the parallel translate of $w$ along $c_0$;
see \cite[Corollary 6.3.8]{BuserKarcher81}.
From \eqref{df} and \eqref{estj}, we obtain
\begin{align*}
	|\nabla&(f\circ g_v)|_x(w) - \nabla f|_x(w)| \\
	&\le |\nabla(f\circ g_v)|_x(w) - \nabla f|_{g(x,v)}(w')| 
	+ | \nabla f|_{g(x,v)}(w') - \nabla f|_x(w)| \\
	&= |\nabla f|_{g(x,v)}(J_0(1) - w')| + | \nabla f|_{g(x,v)}(w') - \nabla f|_x(w)| \\
	&\le \ell b^2|v|^2 |w| + \beta|v||w|
	\le (\ell b+\beta)|v||w| 
\end{align*}
for almost all $0\le|v|<r$ with $b|v|\le1$ and all $w\in T_xX$,
where we use the rough estimate $\cosh t\le 1+t^2$ for $0\le t\le 1$.
In conclusion,
\begin{align}\label{fkf1}
	|\nabla f_\kappa|_x - \nabla f|_x| \le (\ell b+\beta)\kappa
\end{align}
for all $0<\kappa<r$ with $b\kappa\le1$.

With our special frame through $x$ and geodesic variation $c$ as in \eqref{varic}
and for any $v\in\R^m$ such that $\nabla^2f$ exists at $g(x,v)$,
\begin{align}\label{d2fgv}
\begin{split}
	\nabla^2(f\circ g_v)|_x(w,w)
	&= \frac{\partial}{\partial s}\big|_{s=0} \nabla f|_{g(c_w(s),v)}(J_s(1)) \\
	&= \nabla^2f|_{g(x,v)}(J_0(1),J_0(1)) + \nabla f|_{g(x,v)}(K_0(1)),
\end{split}
\end{align}
where $K_s=\nabla J_s/\partial s$ denotes the covariant derivative
of the Jacobi fields $J_s$ in the variational direction.
Note that $K_0$ satisfies the inhomogeneous Jacobi equation \eqref{jk} along $c_0$
with initial conditions
\begin{align}\label{ick}
	K_0(0) = 0
	\hspace{3mm}\text{and}\hspace{3mm}
	K_0'(0) = R(F(x,v),w)w.
\end{align}
To get an estimate for $K_0(1)$,
set $X=K_0'$ to reduce the differential equation \eqref{jk} for $K_0$ of second order
to the differential equation
\begin{align*}
	K_0' & = X \\
	X' &= -R(K_0,c_0')c_0' + Y
\end{align*}
of first order for the pair $(K_0,X)$, where
\begin{align*}
	Y = 4R(c_0',J_0)J_0' + \nabla R(c_0',c_0',J_0)J_0 - \nabla R(J_0,J_0,c_0')c_0'.
\end{align*}
Since $|c_0'|=|v|$, \cref{rauch2} yields
\begin{align*}
	|J_0|\le\cosh(b|v|)|w| \quad\text{and} \quad |J_0'|\le |v|b\tanh(b|v|)|J_0|
\end{align*}
on $[0,1]$.
Therefore
\begin{align*}
	|(K_0,X)|'
	&\le |(K_0,X)'| \\
	&\le |(K_0,X)| + 4b^2|c_0'||J_0||J_0'| + 2b'|c_0'|^2|J_0|^2 \\
	&\le |(K_0,X)| + (4b^3\tanh(b|v|)+2b')\cosh(b|v|)^2 |v|^2|w|^2 \\
	&\le |(K_0,X)| + 16(b^3 + b')|v|^2|w|^2
\end{align*}
for all $b|v|\le1$, where $b'$ is a bound of $\nabla R$ on $B(x,r)$.
Now Gronwall's inequality implies that, for all $b|v|\le1$,
\begin{align}\label{gw2}
\begin{split}
	|K_0(1)| &\le |(K_0,X)(1)| \\
	&\le (|(K_0,X)(0)| + 16(b^3 + b')|v|^2|w|^2)e \\
	&= (|R(F(x,v),w)w|+ 16(b^3 + b')|v|^2|w|^2)e \\
	&= (b^2|v||w|^2+ 16(b^3 + b')|v|^2|w|^2)e
	\le C|v||w|^2.
\end{split}
\end{align}
With $w'$ as above, we get
\begin{align*}
	\nabla^2&(f\circ g_v)|_x(w,w)
	= \nabla^2f|_{g(x,v)}(J_0(1),J_0(1)) + \nabla f|_{g(x,v)}(K_0(1)) \\
	&= \nabla^2f|_{g(x,v)}(w',w') +  \nabla^2f|_{g(x,v)}(w',J_0(1)-w') \\
	&\hspace{6mm}+ \nabla^2f|_{g(x,v)}(J_0(1)-w',J_0(1)) + \nabla f|_{g(x,v)}(K_0(1)) \\
	&= \nabla^2f|_{g(x,v)}(w',w') \pm \{\beta(\cosh(b|v|)-1)|w|^2 \\
	&\hspace{6mm}+ \beta(\cosh(b|v|)-1)\cosh(b|v|)|w|^2 + C\ell|v||w|^2\}
\end{align*}
for almost all $0\le|v|\le\kappa<r$ with $b|v|\le1$ and all $w\in T_xX$.
Setting $C'=(1+\cosh(1))b$, we conclude that
\begin{align}\label{fkf2}
	\nabla^2(f\circ g_v)|_x(w,w) = \nabla^2f|_{g(x,v)}(w',w')
	\pm (C\ell+C'\beta)\kappa|w|^2
\end{align}
for almost all $0\le|v|\le\kappa<r$ with $b|v|\le1$ and all $w\in T_xX$.

\begin{proof}[Proof of \cref{papa}]
Let $f$ be the distance function to $C=\{f=0\}$.
Then, by \cref{curvest2}, there is a $0<\delta<\ve$ such that  $\{f\le r\}$ is a closed and strictly
convex domain such that the second fundamental form $S_r$ of its boundary $\{f=r\}$
satisfies $0<\alpha-\ve\le S_r\le\beta+\ve$ almost everywhere, for all $0\le r<\delta$. 
Then $|\nabla^2f|\le\beta+\ve$ on $0<f\le\delta$ and therefore, by \eqref{fkf1},
\begin{align}\label{dfkest}
	|\nabla f_\kappa|_x - \nabla f|_x| \le (b+\beta+\ve)\kappa,
\end{align}
for all $0<\kappa<\delta/3$ and $x\in\{\delta/3\le f\le2\delta/3\}$.
We also have
\begin{align*}
	|\nabla^2&(f\circ g_v)|_x(w,w) -  \nabla^2f|_{g(x,v)}(w',w')|
	\le (C+C'(\beta+\ve))\kappa|w|^2
\end{align*}
for almost all $|v|<\delta/3$ with $b|v|\le1$ and $x\in\{\delta/3\le f\le2\delta/3\}$, by \eqref{fkf2}.
Therefore
\begin{align*}
	\alpha - 2\ve
	\le \nabla^2 f_\kappa|_x
	\le \beta + 2\ve
	\quad\text{on $\nabla f_x^\perp$}
\end{align*}
for all sufficiently small $\kappa>0$ and $x\in\{\delta/3\le f\le2\delta/3\}$.
Thus
\begin{align}\label{d2fkest}
	\alpha - 3\ve
	\le \nabla^2 f_\kappa|_x
	\le \beta + 3\ve
	\quad\text{on $\nabla f_\kappa|_x^\perp$}
\end{align}
for all sufficiently small $\kappa>0$ and $x\in\{\delta/3\le f\le2\delta/3\}$.
Therefore $C'=\{f_\kappa\le\delta'\}$ is a closed domain containing $C$ and contained in $U_\ve(C)$,
whose boundary $\partial C'=\{f_\kappa=\delta'\}$ is smooth
with second fundamental form $\alpha-3\ve\le S'\le\beta+3\ve$,
for any sufficiently small $\kappa>0$
and regular value $\delta'$ of $f_\kappa$ sufficiently close to $\delta/2$.
If  $0<3\ve<\alpha$, then $S'$ is positive definite and hence $C'$ strictly convex.
By \cref{invari}, $C'$ is invariant under any isometry of $X$ leaving $C$ invariant.
\end{proof}

%%%%%%%%%%%%%%%%%%%%%%%%%%%%%%%%%%%%%%%%%%%%%
\section{The symmetry of second derivatives}
\label{symmetry}
%%%%%%%%%%%%%%%%%%%%%%%%%%%%%%%%%%%%%%%%%%%%%

Say that $x\in\R^m$ is a $2$-Lebesgue point of a map $f\colon\R^m\to\R^n$ if,
for any orthonormal $u,v\in\R^m$ tangent to coordinate directions,
\begin{align*}
	\lim_{r\to0}\frac1{r^2}\int_0^r\int_0^r |f(x+su+tv)-f(x)| = 0.
\end{align*}
Together with the Fubini theorem,
the Lebesgue differentiation theorem implies that almost any point of $\R^m$
is a $2$-Lebesgue point of $f$ if $f$ is locally integrable;
compare with \cite[Section 2.1.4]{GiaquintaModica09}.

\begin{lem}\label{lemsym}
For any $f\in C^{1,1}(\R^m,\R^n)$,
$d^2f(x)$ is symmetric at each $2$-Lebesgue point $x$ of the map $d^2f$.
\end{lem}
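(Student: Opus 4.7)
The plan is to mimic the classical proof that mixed partials commute for $C^2$-functions, replacing the pointwise mean value theorem by an $L^1$-averaging argument tailored to the $2$-Lebesgue point hypothesis. Fix two coordinate directions $u,v$ and consider the second difference
\begin{align*}
	\Delta(s,t) = f(x+su+tv) - f(x+su) - f(x+tv) + f(x),
\end{align*}
which is manifestly invariant under the swap $(u,s)\leftrightarrow(v,t)$. Since $f\in C^{1,1}$, the derivative $df$ is Lipschitz, hence absolutely continuous along every line, and Rademacher's theorem guarantees that $d^2f$ exists almost everywhere and is essentially bounded by $\operatorname{Lip}(df)$.

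First I would apply the fundamental theorem of calculus twice: to $\tau\mapsto f(x+su+\tau v)-f(x+\tau v)$, and then for (a.e.) fixed $\tau$ to $\sigma\mapsto df(x+\sigma u+\tau v)(v)$. Invoking Fubini, which is legitimate since the integrand is essentially bounded, this yields
\begin{align*}
	\Delta(s,t) = \int_0^s\!\!\int_0^t d^2f(x+\sigma u+\tau v)(u,v)\,d\tau\,d\sigma.
\end{align*}
Running the same computation with the roles of the two variables exchanged yields the same identity with $(v,u)$ in place of $(u,v)$. Subtracting, I obtain
\begin{align*}
	\int_0^s\!\!\int_0^t\bigl[d^2f(x+\sigma u+\tau v)(u,v) - d^2f(x+\sigma u+\tau v)(v,u)\bigr]\,d\tau\,d\sigma = 0.
\end{align*}

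Setting $s=t=r$, dividing by $r^2$, and using that $x$ is a $2$-Lebesgue point of the finite-dimensionally valued map $d^2f$, I pass to the limit $r\to0^+$ and conclude that $d^2f(x)(u,v)=d^2f(x)(v,u)$. Since $u,v$ were arbitrary orthonormal coordinate directions, this gives the symmetry of $d^2f(x)$ as a bilinear form.

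The main obstacle I anticipate is justifying that, for almost every $\tau$, the $\sigma$-derivative of $df(x+\sigma u+\tau v)(v)$ exists almost everywhere and coincides with $d^2f(x+\sigma u+\tau v)(u,v)$. Both the existence and the identification follow from Lipschitz continuity of $df$ combined with Rademacher's theorem, via a standard Fubini-type argument showing that the set of $(\sigma,\tau)$ at which $d^2f$ is defined meets almost every line parallel to the $\sigma$-axis in a set of full one-dimensional measure. The measurability and integrability needed to invoke Fubini on the resulting double integral are then routine, since everything is bounded by $\operatorname{Lip}(df)$ on the relevant compact set.
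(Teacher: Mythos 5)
Your proof is correct and follows essentially the same route as the paper's: express the second difference $f(x+ru+rv)-f(x+ru)-f(x+rv)+f(x)$ as a double integral of $d^2f(\cdot)(u,v)$ via the fundamental theorem of calculus (using absolute continuity of $df$ along lines), exploit the manifest symmetry of the second difference together with Fubini, divide by $r^2$, and invoke the $2$-Lebesgue point hypothesis to pass to the limit. The paper isolates the error term $I_{u,v}(r)=\int_0^r\int_0^r\{d^2f(x+su+tv)-d^2f(x)\}(u,v)\,ds\,dt$ explicitly before subtracting, while you subtract first and then apply the Lebesgue-point estimate; this is a cosmetic difference, not a substantive one.
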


In view of our needs, we assume that $f$ is $C^{1,1}$,
but somewhat weaker assumptions would also be sufficient.

\begin{proof}[Proof of \cref{lemsym}]
For $u,v\in\R^m$, we have
\begin{align*}
	f(x+ru+rv) - f(x&+ru) - f(x+rv) + f(x) \\
	&= \int_0^r \{df(x+ru+tv)-df(x+tv)\}v\dt \\
	&= \int_0^r\int_0^r d^2f(x+su+tv)(u,v)\ds\dt \\
	&= I_{u,v}(r) + r^2d^2f(x)(u,v),
\end{align*}
where we note, for the penultimate equality, that $df$ is $C^{0,1}$ and where
\begin{align*}
	I_{u,v}(r) = \int_0^r\int_0^r \{ d^2f(x+su+tv)-d^2f(x)\}(u,v)\ds\dt.
\end{align*}
Interchanging the roles of $u$ and $v$, we obtain
\begin{align*}
	f(x+ru+rv) - f(x&+ru) - f(x+rv) + f(x) \\
	&= \int_0^r\int_0^r d^2f(x+su+tv)(v,u)\dt\ds \\
	&= I_{v,u}(r) + r^2d^2f(x)(v,u).
\end{align*}
If $x$ is a $2$-Lebesgue point of $d^2f$ and $u,v$ are orthonormal and tangent to coordinate directions,
then we have
\begin{align*}
	\lim_{r\to0}\frac1{r^2}  I_{u,v}(r)
	= \lim_{r\to0}\frac1{r^2} I_{v,u}(r)
	= 0.
\end{align*}
Therefore, by the above computations,
\begin{align*}
	|d^2f(x)(u,v)-d^2f(x)(v,u)|
	&\le \lim_{r\to0}\frac1{r^2} |I_{u,v}(r)-I_{v,u}(r)| = 0.
\end{align*}
Hence $d^2f(x)$ is symmetric.
\end{proof}

\begin{cor}
For any $k\ge1$ and $f\in C^{k,1}(\R^m,\R^n)$,
$d^{k+1}f(x)$ is symmetric at each $2$-Lebesgue point $x$ of the map $d^{k+1}f$.
\end{cor}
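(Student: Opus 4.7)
The plan is to reduce to \cref{lemsym} by applying it to the auxiliary function $g := d^{k-1}f$, viewed as a map from $\R^m$ into the finite-dimensional Euclidean space $W$ of symmetric $(k-1)$-multilinear forms from $\R^m$ to $\R^n$. Since $f \in C^{k,1}$, we have $dg = d^kf$ locally Lipschitz, hence $g \in C^{1,1}(\R^m, W)$. The proof of \cref{lemsym} uses only the fundamental theorem of calculus and the defining estimate of a $2$-Lebesgue point, so it extends verbatim to an arbitrary finite-dimensional Euclidean target. Thus, at each $2$-Lebesgue point $x$ of $d^2 g = d^{k+1}f$, the form $d^{k+1}f(x)$ is symmetric in its first two arguments.

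To obtain the remaining symmetries, I would invoke the classical Schwarz theorem: since $f \in C^k$, the form $d^kf(y)$ is symmetric in its $k$ arguments at every $y$, and this symmetry in the $k$ arguments of $d^kf$ is preserved under differentiation in an additional direction. Hence $d^{k+1}f(x)$ is invariant under the copy of $S_k$ permuting the last $k$ of its $k+1$ positions. The transposition of positions $1$ and $2$ together with this $S_k$ on positions $\{2, \ldots, k+1\}$ generates all of $S_{k+1}$: for any $j \ge 3$, one has $(1\,j) = (2\,j)(1\,2)(2\,j)$, and transpositions of the form $(1\,j)$ together with $S_k$ on $\{2,\ldots,k+1\}$ exhaust all transpositions. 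Combining the two partial symmetries therefore yields full symmetry of $d^{k+1}f(x)$.

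The only point requiring care—rather than being a genuine obstacle—is confirming that \cref{lemsym} goes through for a vector-valued target; this is immediate from inspection of the proof, which is linear in the output. Once noted, the corollary falls out as a clean combination of \cref{lemsym}, the classical Schwarz theorem, and an elementary group-theoretic observation.
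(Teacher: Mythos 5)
Your proof is correct, and it fills in an argument the paper leaves implicit (the corollary is stated without proof). The reduction to \cref{lemsym} by setting $g := d^{k-1}f$, so that $g\in C^{1,1}(\R^m,W)$ with $W$ the finite-dimensional space of symmetric $(k-1)$-multilinear forms, is the natural route. One small remark: \cref{lemsym} is already stated for maps into $\R^n$, i.e.\ for arbitrary finite-dimensional Euclidean targets, so no extension of the lemma is actually required—$W$ is such a space, and since all norms on the (finite-dimensional) target are equivalent, the $2$-Lebesgue point condition for $d^2g$ coincides with that for $d^{k+1}f$. The remainder—symmetry of $d^{k+1}f(x)$ in its last $k$ slots by the classical Schwarz theorem for $C^k$ maps, and the observation that the transposition $(1\,2)$ together with $S_{\{2,\dots,k+1\}}$ generates $S_{k+1}$ (via $(1\,j)=(2\,j)(1\,2)(2\,j)$)—is correct and closes the argument.
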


%%%%%%%%%%%%%%%%%%%%%%%%%%%%%%%%%%%%%%%%%%%%%%
\bibliographystyle{amsplain}
\bibliography{GeFiSpec}
%%%%%%%%%%%%%%%%%%%%%%%%%%%%%%%%%%%%%%%%%%%%%%

\end{document}